\numberwithin{equation}{section}
\newtheorem{Theorem}{Theorem}[section]
\newtheorem{Corollary}[Theorem]{Corollary}
\newtheorem{Lemma}[Theorem]{Lemma}
\newtheorem{Proposition}[Theorem]{Proposition}
 { \theoremstyle{definition}
\newtheorem{Remark}[Theorem]{Remark}
\newtheorem*{Notation}{Notation}
}
\newcommand{\model}{\automorphism({TM},{\mathfrak g})}
\newcommand{\varcheck}[1]{{#1^\vee}}
\DeclareMathOperator{\pseud}{pseud}
\DeclareMathOperator{\rank}{rank}
\DeclareMathOperator{\adjoint}{ad}
\DeclareMathOperator{\Adjoint}{Ad}
\DeclareMathOperator{\kernel}{ker}
\DeclareMathOperator{\automorphism}{Aut}
\DeclareMathOperator{\cokernel}{coker}
\DeclareMathOperator{\curvature}{curv}
\newcommand\monomorphism{\hookrightarrow}
\newcommand\identity{\mathrm{id}}
\newcommand{\modulo}{~\mathrm{mod}~}
\newenvironment{aside}{\begin{quote}\em }{\end{quote}}
\begin{document}


\newcommand{\arXivNumber}{1605.04365}

\renewcommand{\PaperNumber}{114}

\FirstPageHeading

\ShortArticleName{Cartan Connections on Lie Groupoids and their Integrability}

\ArticleName{Cartan Connections on Lie Groupoids\\ and their Integrability}

\Author{Anthony D.~BLAOM}

\AuthorNameForHeading{A.D.~Blaom}

\Address{10 Huruhi Road, Waiheke Island, New Zealand}
\Email{\href{mailto:anthony.blaom@gmail.com}{anthony.blaom@gmail.com}}

\ArticleDates{Received May 19, 2016, in f\/inal form December 02, 2016; Published online December 07, 2016}

\Abstract{A multiplicatively closed, horizontal $n$-plane f\/ield $D$ on a Lie groupoid~$G$ over~$M$ generalizes to intransitive geometry the classical notion of a Cartan connection. The inf\/initesimalization of the connection~$D$ is a Cartan connection~$\nabla $ on the Lie algebroid of~$G$, a notion already studied elsewhere by the author. It is shown that $\nabla $ may be regarded as inf\/initesimal parallel translation in the groupoid $G$ along $D$. From this follows a proof that $D$ def\/ines a pseudoaction generating a~pseudogroup of transformations on~$M$ precisely when the curvature of~$\nabla $ vanishes. A byproduct of this analysis is a detailed description of multiplication in the groupoid~$J^1 G$ of one-jets of bisections of~$G$.}

\Keywords{Cartan connection; Lie algebroid; Lie groupoid}

\Classification{53C05; 58H05; 53C07}

\section{Introduction}\label{intro}
This article describes a generalization of classical Cartan connections using the language of Lie groupoids, with which the reader is assumed to have some familiarity. We recommend the introduction given in~\cite{CannasdaSilva_Weinstein_99}. For more detail, see~\cite{Crainic_Fernandes_11,Dufour_Nguyen_05,Mackenzie_05}.

The present article provides some detail missing from our work published on related matters~\cite{Blaom_06,Blaom_12,Blaom_13,Blaom_16} and is more technical than those works. Applications of the Lie groupoid approach to Cartan connections will be given elsewhere.

\subsection{Cartan connections on Lie groupoids}\label{onepoint}
Let $G$ be a Lie groupoid over a smooth connected manifold $M$ and $J^1 G$ the corresponding Lie groupoid of one-jets of local bisections of $G$. Then a right-inverse $S \colon G \rightarrow J^1 G$ for the canonical projection $J^1 G \rightarrow G$ determines a $n$-plane f\/ield $D \subset TG$ on~$G$, where $n=\dim M$. If $S \colon G \rightarrow J^1 G$ is additionally a morphism of Lie groupoids, we call $D$ (or $S$) a {\it Cartan connection} on~$G$~\cite{Blaom_12,Blaom_16}. In the general terminology of~\cite{del_Hoyo_Fernandes_16}, $S$ is a unital f\/lat cleavage for the Lie groupoid f\/ibration $J^1 G \rightarrow G$. Or, if we view the tangent bundle $TG$ as a Lie groupoid over $TM$, then $D \subset TG$ should be multiplicatively closed in the sense of~\cite{Crainic_etal_15, Salazar_13}.

Cartan connections on Lie groupoids are the global analogues of the Cartan connections on Lie algebroids f\/irst introduced in~\cite{Blaom_06}~-- here called {\it infinitesimal Cartan connections}~-- and studied further in~\cite{Blaom_12,Blaom_13}. Inf\/initesimal Cartan connections also appear in a new approach to Cartan geometries introduced in the recent monograph \cite{Crampin_Saunders_16} (where they are called Blaom connections).

Cartan connections in various incarnations have been central to the study of f\/inite-type geometric structures, and to the geometric approach to studying overdetermined systems of partial dif\/ferential equations. The f\/irst example can be found in \`Elie Cartan's seminal paper~\cite{Cartan_10}. In the present article a {\it classical Cartan connection} is a certain one-form on a principal $H$-bundle~$P$ taking values in a vector space $V$ with~$\dim V= \dim P$, as f\/irst formulated by Ehresmann~\cite{Ehresmann_51}. See Section~\ref{classical} for the def\/inition and~\cite{Sharpe_97} for a nice introduction. Our f\/irst result, proven in Sections~\ref{ugt} and~\ref{recover}, is a correspondence between classical Cartan connections and Cartan connections on {\em transitive} Lie groupoids:
\begin{Theorem}\label{theorem1.1}
Every classical Cartan connection $\omega$ on a principal $H$-bundle $P$ determines a canonical Cartan connection $S^\omega$ on the corresponding gauge groupoid $G_P=(P \times P)/H$. Conversely, every Cartan connection on a transitive Lie groupoid $G$ coincides with $S^\omega$, for some classical Cartan connection $\omega $, after the identification $G \cong G_P$ determined by a choice of source-fibre $P \subset G$.
\end{Theorem}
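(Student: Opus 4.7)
The plan is to treat the two directions separately, keeping in mind that a Cartan connection on a Lie groupoid amounts to an $n$-plane distribution $D \subset TG$ that is transverse to $\ker ds$ and multiplicatively closed.

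Forward direction. I would represent tangent vectors to $G_P = (P \times P)/H$ at $[p,q]$ as pairs $(v_p, v_q) \in T_pP \oplus T_qP$ modulo the diagonal $\mathfrak h$-action $X \mapsto (X^*_p, X^*_q)$ by fundamental vector fields, and define $D^\omega_{[p,q]}$ to be the image in the quotient of the set of pairs satisfying $\omega_p(v_p) = \omega_q(v_q) \in V$. The $H$-equivariance of $\omega$ makes this well-defined, and a dimension count combined with the agreement of $\omega$ with the fundamental-vector-field assignment on vertical vectors forces $D^\omega$ to have rank $n$ and to project isomorphically to $TM$ under both $ds$ and $dt$. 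Multiplicative closure reduces to transitivity of equality: if $\omega_p(v_p) = \omega_q(v_q)$ and $\omega_q(v_q) = \omega_r(v_r)$, then $\omega_p(v_p) = \omega_r(v_r)$, so the groupoid product $[(v_p,v_q)] \cdot [(v_q,v_r)] = [(v_p,v_r)]$ lies in $D^\omega_{[p,r]}$, and $S^\omega\colon G_P \to J^1 G_P$ is a morphism of groupoids.

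Converse direction. Fix $x_0 \in M$, set $P = s^{-1}(x_0)$, and identify $G \cong G_P$ via $pq^{-1} \leftrightarrow [p,q]$. Let $D$ be the distribution of the given Cartan connection; horizontality gives $D_p \cap \ker ds_p = 0$ for every $p \in P$, so that $TG|_P = TP \oplus D|_P$. I would combine this splitting with a groupoid-translation maneuver --- right multiplication by $p^{-1}$ sends $p$ to $1_{t(p)}$ --- to construct $\omega_p\colon T_pP \to V$, where the codomain $V$ is naturally the Lie algebroid fibre $\mathfrak g_{x_0}$ after using $D$ to trivialize $\mathfrak g$ along the unit section. The classical Cartan axioms --- absolute parallelism, $H$-equivariance, and agreement with fundamental vector fields on vertical inputs --- then follow from, respectively, transversality of $D$ to $\ker ds$, multiplicativity of $D$ restricted to the isotropy subgroup $H$, and unitality of the cleavage $S$. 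Finally, $S^\omega = S$ reduces to matching the two distributions at $1_{x_0}$ (true by construction), after which multiplicative closure propagates the equality throughout $G$.

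The main obstacle is the converse direction, specifically producing the fixed codomain $V$ together with an $H$-equivariant isomorphism $T_pP \to V$ using only the datum of $D$. Identifying the varying Lie algebroid fibres $\mathfrak g_{t(p)}$ with $\mathfrak g_{x_0}$ in a way consistent with both the groupoid structure and the $H$-action requires careful use of the multiplicativity of $D$ along the unit section. Once this trivialization is in place, the remaining verifications of the Cartan axioms for $\omega$ and of the identity $S^\omega = S$ are routine consistency checks on the gauge-groupoid representation $(P \times P)/H$.
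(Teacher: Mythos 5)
Your forward direction is correct but takes a genuinely different (and in some ways cleaner) route than the paper. You work directly with the tangent distribution: $D^\omega_{[p,q]}$ is the image in $(T_pP\oplus T_qP)/{\mathfrak h}_{\mathrm{diag}}$ of the graph $\{\omega_p(v_p)=\omega_q(v_q)\}$, with well-definedness from axiom (4), rank $n$ and horizontality from axiom (3) (the diagonal ${\mathfrak h}$ sits inside the graph, and any pair with $v_q$ vertical is forced into the diagonal), and multiplicative closure from transitivity of equality. The paper instead stays at the level of jets of bisections: it encodes $\omega$ as a connection $S$ on the pair groupoid $P\times P$ landing in the subgroupoid $J^1(P\times P)^{\mathfrak h}$ of infinitesimally $H$-equivariant jets, and the technical heart is Lemma~\ref{lemma2.2}, the identification $J^1((P\times P)/H)\cong J^1(P\times P)^{\mathfrak h}/H$, after which $S^\omega$ is just $S\bmod H$. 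Your version buys a short, coordinate-free verification of multiplicativity at the cost of not exhibiting the jet-groupoid quotient explicitly; the paper's version buys Lemma~\ref{lemma2.2}, which it reuses elsewhere.

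In the converse direction your skeleton matches the paper's (Section~\ref{recover}), but the step you flag as ``the main obstacle'' is exactly the point where your proposed mechanism is off: the paper does \emph{not} trivialize ${\mathfrak g}$ ``along the unit section'' (where $D$ restricts to $TM$ and gives no information). Instead it uses the canonical adjoint representation of $J^1G$ on ${\mathfrak g}$ (Section~\ref{adj}): composing with $S$ gives a genuine representation $g\cdot\xi=\Adjoint_{S(g)}\xi$ of $G$ on ${\mathfrak g}$ precisely because $S$ is a groupoid morphism, and then $\omega(v)=g^{-1}\cdot(TR_{g^{-1}}\cdot v)$ with $V={\mathfrak g}|_{m_0}$. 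The consistency of the identifications ${\mathfrak g}_{t(p)}\cong{\mathfrak g}_{x_0}$ that you worry about is thus automatic from functoriality of $\Adjoint\circ S$, and $H$-equivariance of $\omega$ and the vertical normalization $\omega(\xi^\dagger)=\xi$ fall out of the cocycle property of this representation rather than requiring a separate argument about multiplicativity of $D$ near the units. With that substitution your outline closes; as written, the trivialization step is a real gap, since nothing built only from $D$ near $M\subset G$ can transport ${\mathfrak g}_{t(p)}$ back to ${\mathfrak g}_{x_0}$ for $p$ far from the units.
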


It should be noted here that our notion of a classical Cartan connection is, by default, stripped of any `model data' because such data plays no role at all in the above correspondence\footnote{Model data {\em is} part of Ehresmann's original formulation.}.

It was shown already in \cite{Blaom_06} that every classical Cartan connection on a principal bundle $P$ determines an inf\/initesimal Cartan connection on the associated Atiyah Lie algebroid $TP/H$.

\subsection{Examples}\label{reg}
\looseness=-1 To illustrate the utility of Cartan connections in a simple example, suppose $M$ is a Riemannian manifold, and let $M \times M$ be the pair groupoid. Regard elements of $J^1 (M \times M)$ as one-jets of local transformations of $M$ and let $G \subset J^1 (M \times M)$ be the subgroupoid of all one-jets $J^1 _m \phi $ for which the tangent map $T_m\phi \colon T_{m}M \rightarrow T_{\phi(m)}M$ is an isometry. Then it can be shown (see the appendix) that $G$ carries a canonical Cartan connection $D$ whose signif\/icance is this: The f\/irst-order extension $b=J^1 \phi $ of every local isometry $\phi$ of $M$ is a~local bisection of $G$ integrating~$D$ (i.e., $D$ is tangent to $b$, when $b$ is viewed as a submanifold of $G$). And, conversely, {\em every $n$-dimensional integral manifold of $D$ is locally a local bisection of $G$ arising as the extension of a~local isometry~$\phi $}. In other words, the local isometries are in one-to-one correspondence with the suf\/f\/iciently localised $n$-dimensional integral manifolds of~$D$. In particular, obstructions to the involutivity (integrability) of $D$ amount to obstructions to the existence of local isometries of~$M$.

The main result of the present article, formulated in Section \ref{themain} below, concerns the inf\/ini\-te\-si\-mal (Lie algebroid) version of obstructions to the involutivity of a Cartan connection.

The prototype of an involutive Cartan connection $D$ is the one associated with the action of a Lie group $G_0$ on $M$. In this case one takes $G$ to be the action groupoid $G_0 \times M$ (transitive only if the action is transitive) and def\/ines a Cartan connection as follows: $S(g,m)$ is the one-jet at $m$ of the constant bisection $m' \mapsto (g,m')$.

More examples of Cartan connections on Lie groupoids will be given elsewhere.

\subsection{Inf\/initesimal Cartan connections}\label{hsd}
Inf\/initesimalizing a Cartan connection, viewed as a Lie groupoid morphism $S \colon G \rightarrow J^1 G$, we obtain a splitting $s \colon {\mathfrak g} \rightarrow J^1 {\mathfrak g} $ of the exact sequence
\begin{gather*}
 0\longrightarrow T^*\!M\otimes{\mathfrak g}\longrightarrow J^1{\mathfrak g}\longrightarrow{\mathfrak g}\longrightarrow 0.
\end{gather*}
Here $J^1 {\mathfrak g} $ denotes the Lie algebroid of one-jets of sections of the Lie algebroid ${\mathfrak g} $ of $G$. (A natural
identif\/ication of the Lie algebroid of $J^1 G$ with $J^1 {\mathfrak g} $ is recalled in Section~\ref{repsagain} below.) In the category of vector bundles, splittings $s$ of the above sequence are in one-to-one correspondence with Koszul connections~$\nabla$ on~${\mathfrak g}$; this correspondence is given by
\begin{gather*}
 sX = J^1 X + \nabla X, \qquad X \in \Gamma({\mathfrak g}). 
\end{gather*}
(According to our sign conventions, the inclusion $T^*\!M \otimes {\mathfrak g} \rightarrow J^1 {\mathfrak g} $ induces the following map on sections: $df \otimes X \mapsto f J^1 X - J^1 (f X)$.) When $s \colon {\mathfrak g} \rightarrow J^1 {\mathfrak g} $ is a morphism of Lie algebroids, as is the case here, then $\nabla $ is called a {\it Cartan connection} on~$\mathfrak g$~\cite{Blaom_06}, or an {\it infinitesimal Cartan connection} when we want to distinguish it from the corresponding global notion def\/ined in Section~\ref{onepoint}. As usual, the {\it curvature} of $\nabla $ is def\/ined by
\begin{gather*}
 \curvature \nabla (X,Y)Z=\nabla_X \nabla_Y Z - \nabla_Y \nabla_X Z - \nabla_{[X,Y]}Z,
\end{gather*}
and $\nabla $ is {\it flat} if $\curvature \nabla =0$.

\subsection{The main theorem}\label{themain}
The central question to be addressed in the present article is this:
\begin{aside}
To what extent are obstructions to the involutivity of a Cartan connection $D$ encoded in the corresponding infinitesimal Cartan connection~$\nabla$?
\end{aside}
To answer this question, let $D$ be a Cartan connection on a Lie groupoid $G$ and let $G^D\subset G$ denote the set of points through which there exist $n$-dimensional integral manifolds of $D$, $n=\rank D=\dim M$.
\begin{Theorem}\label{theorem1.2}\quad
\begin{enumerate}\itemsep=0pt
\item[$(1)$]
If a Cartan connection $D$ on a Lie groupoid~$G$ over $M$ is involutive in some neighbor\-hood~$V$ of~$M$ then the corresponding infinitesimal Cartan connection $\nabla$ is flat.
\item[$(2)$] 
$G^D$ is a wide subgroupoid of $G$ and, when $\nabla $ is flat, a union of connected components of~$G$ containing the source-connected component $G^{0}$ of~$G$.
\end{enumerate}
\end{Theorem}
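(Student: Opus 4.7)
The plan is to exploit the interpretation (promised in the abstract and to be established in the preceding Section~\ref{repsagain}) of $\nabla$ as infinitesimal parallel translation in $G$ along $D$. Under this interpretation the curvature $R^\nabla$ becomes the infinitesimal Frobenius obstruction of $D$, measured along the identity bisection $M \subset G$. Once this dictionary is in place, both parts of Theorem~\ref{theorem1.2} reduce to geometric bookkeeping.

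For part~(1), I would fix $X,Y \in \Gamma(\mathfrak g)$ and consider the corresponding sections $sX = J^1 X + \nabla X$ and $sY$ of $J^1{\mathfrak g}$; exponentiating these (or invoking the parallel-translation description of $\nabla$ directly) produces locally defined vector fields on $G$ tangent to $D$ near~$M$. Involutivity of $D$ on $V\supset M$ forces the Lie bracket of any two such fields to remain tangent to $D$ throughout~$V$. Extracting the transverse component of this bracket, using the splitting $s$ to identify the complement of $D$ with $\mathfrak g$, and restricting to~$M$, should recover exactly $R^\nabla(X,Y)$. Since the transverse component vanishes identically along $M$, one concludes $R^\nabla=0$.

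For part~(2), the argument splits into three observations. \emph{Wideness}: the identity bisection of $G$ is tangent to $D$ at every $1_m$, since by construction $S(1_m)$ is the one-jet of the identity bisection; hence $1_m \in G^D$. \emph{Closure under composition and inversion}: given composable $g,h \in G^D$ with integral bisections $b_1,b_2$, the multiplicative closure of $D\subset TG$ (intrinsic to $S$ being a groupoid morphism) guarantees that the product bisection $b_1\star b_2$ integrates $D$ through $gh$, and similarly $b_1^{-1}$ through $g^{-1}$; so $G^D$ is a subgroupoid. \emph{Flat case}: when $R^\nabla=0$, the identification from part~(1) runs in reverse to give local involutivity of $D$ on some neighbourhood of $M$, and the Frobenius theorem produces an $n$-dimensional integral manifold of $D$ through each $1_m$. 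Combined with the subgroupoid property, this gives $G^0\subset G^D$; a wide subgroupoid containing $G^0$ is automatically a union of $G^0$-cosets, which are (unions of) connected components of~$G$.

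The principal obstacle is the explicit identification of $R^\nabla$ with the Frobenius obstruction of $D$ at $M$ used in step~(1), together with its converse used in step~(2c). Both rest on the parallel-translation interpretation of~$\nabla$, which in turn demands a fine-grained description of the Lie algebroid structure on $J^1{\mathfrak g}$ and of how the splitting $sX = J^1 X + \nabla X$ transports brackets from $\mathfrak g$ to $J^1{\mathfrak g}$; this is essentially the infinitesimal shadow of the multiplication formula for $J^1 G$ mentioned in the abstract. Once that analysis is carried out, the remaining combinatorics of closing $G^D$ under composition and inversion are routine consequences of the multiplicative closure of $D$.
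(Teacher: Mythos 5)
Your treatment of the subgroupoid structure of $G^D$ is sound and is essentially the paper's Lemma~\ref{lemma-jgd} (products and inverses of integral bisections integrate $D$ because $S$ is a groupoid morphism). The pivot of your part~(1), however---identifying $\curvature\nabla(X,Y)$ with the transverse component, \emph{restricted to} $M$, of the bracket of two $D$-sections---cannot be right. There is first a type mismatch: $\curvature\nabla$ is a section of $\wedge^2T^*M\otimes{\mathfrak g}^*\otimes{\mathfrak g}$, while the Frobenius obstruction of $D$ along $M$ lies in $\wedge^2T^*M\otimes{\mathfrak g}$. More seriously, that restricted obstruction vanishes for \emph{every} Cartan connection, involutive or not: since $D(m)=T_mM$, the identity bisection is always an integral manifold of $D$, so brackets of $D$-sections are automatically tangent to $D$ along $M$; your chain of implications would therefore prove that every infinitesimal Cartan connection is flat. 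The curvature is detected only by differentiating the Frobenius data \emph{transversally} to $M$, in the source-fibre direction, and supplying that derivative is the technical heart of the proof. The paper does this by first establishing $\nabla=\nabla^D$ (Theorem~\ref{theorem-aswedid}), i.e.\ that $\nabla$ is literally infinitesimal parallel translation along $D$, and then using the local submersion $\Omega$ whose level sets are the leaves to exhibit $\nabla$ as a pullback of the trivial flat connection via $\omega(\nabla_vX)={\mathcal L}_v(\omega(X))$. You name the parallel-translation interpretation but never deploy it in a way that escapes the vacuous restriction to $M$.

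Two further steps in your part~(2) fail as stated. (i) ``The identification runs in reverse to give local involutivity of $D$ near $M$'': flatness of $\nabla$ is a condition on $M$, so even a correct identification would only kill the Frobenius tensor to first order at $M$, not on a neighbourhood; this converse is the hardest part of the theorem and is not a formal reversal. The paper obtains it from the structure theorem for flat Cartan connections (Lemma~\ref{lemma-dpo}: ${\mathfrak g}$ is an action algebroid), integration by Lie~II to a morphism $\Omega\colon G\rightarrow G_0$ whose level sets define a second, manifestly involutive Cartan connection $D'$, and a separate argument that $D=D'$ by comparing the two infinitesimal connections. (ii) ``A wide subgroupoid containing $G^0$ is automatically a union of connected components'' is false: for the bundle of groups $G=M\times{\mathbb Z}/2$ over connected $M$ one has $G^0=M\times\{0\}$, and $\big(M\times\{0\}\big)\cup\big(\{m_0\}\times\{1\}\big)$ is a wide subgroupoid containing $G^0$ that is not a union of components. (Note also that a single coset $G^0g$ sits inside one source fibre and is never open when $\dim M>0$.) The paper instead proves directly that $G^D$ is open---by translating a foliation chart near $M$ with an integral bisection through $g_0$ and invoking Lemma~\ref{lemma-jgd}---and closed, by continuity of the torsion tensor $\tau$ of $D$. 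Both of these arguments are absent from your proposal and neither follows from the subgroupoid property alone.
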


The proof of this theorem and its connection to other work is discussed in Sections~\ref{sal} and~\ref{mmm} below.

\subsection{Pseudoactions}\label{ergs}
If $\nabla $ above is f\/lat then, replacing $G$ with a union of connected components, $D$ is tangent to a~foliation ${\mathcal F}$ on~$G$. We now axiomatize the sense in which ${\mathcal F} $ is compatible with the multiplicative structure of~$G$, and how it accordingly generates a pseudogroup of local transformations on~$M$ (the pseudogroup of isometries of~$M$ in the case of a maximally symmetric Riemannian manifold).

Let $G$ be a Lie groupoid and call an immersed submanifold $\Sigma\subset G$ a {\it pseudotransformation} if the restrictions to $\Sigma$ of the groupoid's source and target maps are local dif\/feomorphisms. A~{\it pseudoaction}~\cite{Blaom_16} on $G$ is any smooth foliation ${\mathcal F}$ on $G$ such that:
\begin{enumerate}\itemsep=0pt
 \item[(1)] The leaves of ${\mathcal F}$ are pseudotransformations.
 \item[(2)] ${\mathcal F}$ is {\it multiplicatively closed}.
\end{enumerate}
To def\/ine what is meant in (2), regard local bisections of~$G$ as immersed submanifolds and let~$\widehat{\mathcal F}$ denote the collection of all local bisections that intersect each leaf of~${\mathcal F} $ in an {\em open} subset. Let~$ \widehat G$ denote the collection of {\em all} local bisections of~$G$, this being a groupoid over the collection of all open subsets of $M$. Then condition~(2) is the requirement that $\widehat{\mathcal F}\subset \widehat G$ be a subgroupoid.

Given a pseudoaction ${\mathcal F}$ of $G$ on $M$, each element $b \in \widehat{\mathcal F}$ def\/ines a local transformation~$\phi_b$ of~$M$. Let $\pseud({\mathcal F})$ denote the set of all local transformations $\psi \colon U \rightarrow V$ of $M$ that are locally of this form; that is, for every $m \in U$, there exists a~neighborhood $U' \subset U$ of $m$ such that $\psi|_{U'}= \phi_b$ for some for some $b \in\widehat {\mathcal F}$. Then, by~(2), $\pseud({\mathcal F})$ is a pseudogroup.

A pseudoaction generalizes the ordinary action of a Lie {\em group}~$G_0$ on $M$: Take $G$ to be the corresponding action groupoid $G_0 \times M$ and ${\mathcal F} $ the foliation whose leaves are $\{g\}\times M$, $g \in G_0$. For more examples see~\cite{Blaom_16}, where the following elementary observation is established:
\begin{Proposition}\label{proposition-ergs} An arbitrary foliation ${\mathcal F}$ on $G$ is a pseudoaction if and only if its tangent $n$-plane field $D$ is a Cartan connection on~$G$.
\end{Proposition}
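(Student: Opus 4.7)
The plan is to unpack the axioms on both sides and verify them one at a time, leveraging the foliation hypothesis to pass freely between the tangent distribution $D$ and the local integral manifolds of $D$. I would begin by recording a basic dictionary. Because $D$ is tangent to $\mathcal F$, the leaves of $\mathcal F$ have dimension $n = \dim M$, and at every $g \in G$ one has $D_g = T_g L_g$, where $L_g$ is the leaf through $g$. A local bisection $b \subset G$ therefore lies in $\widehat{\mathcal F}$ if and only if $T_g b = D_g$ at every $g \in b$---equivalently, if and only if $b$ is an $n$-dimensional integral manifold of $D$.

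Next I would show that pseudoaction axiom~(1) is equivalent to the horizontality required of a Cartan connection. A leaf $L_g$ is a pseudotransformation iff the source and target maps restrict to local diffeomorphisms on $L_g$; since $\dim L_g = \dim M$, this is equivalent to $T_g s$ and $T_g t$ restricting to linear isomorphisms from $D_g$ onto $T_{s(g)}M$ and $T_{t(g)}M$ respectively. That in turn is exactly the statement that $D_g$ is the tangent space at $g$ to some local bisection of $G$ through $g$, i.e., that $D$ is the image of a smooth right-inverse $S \colon G \to J^1 G$ to the natural projection $J^1 G \to G$.

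Finally I would translate axiom~(2) into the multiplicative closure of $D$ (equivalently, of~$S$). The key computational input is the elementary identity that, for composable local bisections $b_1, b_2$ of $G$, the pointwise product $b_1 \cdot b_2$ is again a local bisection whose tangent space at $gh$ equals the tangent-groupoid product $T_g b_1 \cdot T_h b_2$ in $TG \rightrightarrows TM$ (with analogous statements for inverses and units). Combined with the dictionary from the first paragraph, this makes the assertion ``$\widehat{\mathcal F}$ is a subgroupoid of $\widehat G$'' literally equivalent to the assertion ``$D \subset TG$ is a subgroupoid of $TG \rightrightarrows TM$''. Consequently $\mathcal F$ satisfies~(1)--(2) if and only if $S$ is a morphism of Lie groupoids, i.e.\ a Cartan connection.

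The main obstacle I anticipate is keeping the two pictures precisely aligned. In particular, to deduce multiplicative closure of $\widehat{\mathcal F}$ from multiplicative closure of $D$, one needs, given composable tangent vectors $v \in D_g$ and $w \in D_h$, to realise $v$ and $w$ as tangents to local bisections belonging to $\widehat{\mathcal F}$; this is exactly what the second step makes available, but relies on axiom~(1) being in force so that the integral manifolds of $D$ through arbitrary points are honest local bisections rather than merely immersed submanifolds.
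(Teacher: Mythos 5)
Your argument is correct. Note that the paper does not actually prove this proposition itself---it is stated as an ``elementary observation'' established in the cited pseudoactions paper \cite{Blaom_16}---so there is no internal proof to compare against; but your direct verification is exactly the natural one and is consistent with the machinery the paper does develop: the identification of elements of $J^1_gG$ with subspaces of $T_gG$ that are simultaneous complements to the source- and target-fibre tangent spaces (Section~\ref{repsagain}) gives your equivalence between axiom~(1) and $D$ defining a smooth right-inverse $S\colon G\to J^1G$, and the fact that the tangent space to a product of bisections is the $J^1G$-product of their tangent spaces is precisely the mechanism of Lemma~\ref{lemma-jgd}. You are also right to flag that axiom~(1) must be in force before axiom~(2) can be translated, since one needs the leaves to be local bisections in order to realise $S(g)S(h)$ as the one-jet of a product of elements of $\widehat{\mathcal F}$; the only loose ends worth tightening are the unit condition $D(m)=T_mM$ for $m\in M$ (which follows from multiplicativity, as the only idempotents in a groupoid are units) and the routine check that $S$ is smooth because $D$ is.
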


Under an additional completeness hypothesis, a pseudoaction becomes an ordinary Lie group action, with possibly a monodromy `twist'~\cite{Blaom_16}.

An immediate corollary to Theorem~\ref{theorem1.2} can now be stated as follows:
\begin{Corollary}\label{corollary-ergs}
\looseness=-1 Assume $G$ is source-connected. Then a Cartan connection $D$ is tangent to a~pseudoaction ${\mathcal F} $ on~$G$ if and only if the corresponding infinitesimal Cartan connection $\nabla $ is flat.
\end{Corollary}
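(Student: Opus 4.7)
The plan is to deduce the corollary as a direct consequence of Theorem~\ref{theorem1.2} together with Proposition~\ref{proposition-ergs}, treating the two implications separately.

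For the forward direction, suppose $D$ is tangent to a pseudoaction $\mathcal F$. Then $\mathcal F$ is by definition a foliation of $G$ whose tangent distribution is $D$, so $D$ is involutive on all of $G$ and in particular on some neighborhood $V$ of the unit section $M$. Part~(1) of Theorem~\ref{theorem1.2} then immediately yields flatness of the corresponding infinitesimal Cartan connection~$\nabla$.

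For the reverse direction, suppose $\nabla$ is flat. Part~(2) of Theorem~\ref{theorem1.2} tells us that the set $G^D$ of points admitting an $n$-dimensional integral manifold of $D$ is a union of connected components of $G$ containing the source-connected component $G^0$. Since we are assuming $G$ itself is source-connected, $G = G^0 \subseteq G^D$, so $G^D = G$; that is, every point of $G$ is contained in some $n$-dimensional integral manifold of $D$. Because $D$ has constant rank $n = \dim M$, the Frobenius theorem now produces a foliation $\mathcal F$ of $G$ with tangent distribution $D$. Finally, since $D$ is by hypothesis a Cartan connection, Proposition~\ref{proposition-ergs} tells us that $\mathcal F$ is a pseudoaction, completing the proof.

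There is no real obstacle here beyond the two inputs being correctly set up: the forward direction uses only that a foliation is involutive, and the reverse direction uses source-connectedness exactly where needed to upgrade the conclusion ``$G^0 \subseteq G^D$'' to ``$G = G^D$.'' The only point requiring a moment's care is that part~(2) of Theorem~\ref{theorem1.2} yields integral manifolds pointwise, and one must invoke Frobenius (using the constant rank of $D$) to assemble them into a genuine foliation before Proposition~\ref{proposition-ergs} can be applied.
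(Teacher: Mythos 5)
Your proof is correct and follows exactly the route the paper intends: the corollary is stated as an immediate consequence of Theorem~\ref{theorem1.2} combined with Proposition~\ref{proposition-ergs}, with source-connectedness used precisely to upgrade $G^{0}\subseteq G^D$ to $G^D=G$. The one point you rightly flag --- passing from ``an integral manifold through every point'' to involutivity and hence to a genuine foliation via Frobenius --- is handled correctly.
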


In practice, the pseudogroup generated by a pseudoaction ${\mathcal F}$ represents a pseudogroup of {\em symmetries}, the Riemannian example mentioned in Section~\ref{reg} being a case in point. We consequently make the following interpretation:
\begin{aside}
 A Cartan connection on a Lie groupoid is a $($possibly intransitive$)$ `symmetry deformed by curvature'.
\end{aside}

\subsection{Connections to other work}\label{sal}
Theorem \ref{theorem1.2}(1) and the above corollary already follow from a result of Salazar \cite[Theo\-rem~6.4.1]{Salazar_13} (see also~\cite{Crainic_etal_15}) for arbitrary `multiplicatively closed' distributions on a Lie groupoid, of which the $n$-plane f\/ield $D$ def\/ined by a Cartan connection is a special case. The independently obtained proof of Theorem~\ref{theorem1.2} given in Section~\ref{srt} is based on: (i) A detailed analysis of the multiplicative structure of~$J^1 G$, given in Section~\ref{sec3}; and (ii) A characterization of the inf\/initesimal connection $\nabla$ as inf\/initesimal parallel translation in the Lie groupoid $G$ along $D$, described in Section~\ref{gg}. We expect both analyses will be of some independent interest.

In Salazar's approach, one identif\/ies the obstruction to the involutivity of $D$ with a Lie groupoid cocycle $c$, whose inf\/initesimalization $\kappa$, a Lie algebroid cocycle, is seen to vanish when~$\nabla$ is f\/lat. Van Est's isomorphism between Lie groupoid and Lie algebroid cohomology, generalized to Lie groupoids and algebroids~\cite{Crainic_03}, then implies that the cohomology class $[c]$ must vanish. To show $c$ itself vanishes requires a~further topological argument.

Although the basic idea behind Salazar's proof and the present one are reasonably straightforward, f\/lushing out the details requires some work in both cases. Salazar's result has the advantage that it applies in greater generality; we hope that our proof sheds light on the special case important to the theory of Cartan connections.

As far as we know, the f\/irst statement (without proof) of Corollary~\ref{corollary-ergs} appears in \cite[Appendix~A]{Blaom_12}.

By Corollary \ref{corollary-ergs}, the inf\/initesimalization of a pseudoaction is a {\em flat} Cartan connection on a Lie algebroid, also called by us a {\it twisted Lie algebra action}, for reasons explained in~\cite{Blaom_16}. Analogues of the so-called fundamental theorems, Lie~I, Lie~II and Lie~III, for Lie groups and Lie algebras, are established for pseudoactions and twisted Lie algebra actions in~\cite{Blaom_16}, to which the present article can be regarded as a prequel.

The modern redevelopment of Cartan geometries along lines promoted here, and in our other works cited above, has several aspects warranting further investigation. We brief\/ly discuss two here.

First, there have yet to appear many interesting applications of the theory to {\em intransitive} geometric structures (structures that Ehresmann's formulation cannot handle). One reason for this may be the focus of current research on intransitive structures of {\em infinite} type, a subject far richer than its transitive counterpart. These include: Dirac structures~\cite{Courant_90}, which gene\-ra\-li\-ze Poisson, presymplectic, and foliated structures; genera\-li\-zed complex structures~\cite{Hitchin_03}; and Jacobi structures~\cite{Kirillov_76a,Kirillov_77, Lichnerowicz_78} which simultaneously generalize Poisson and contact structures (see also~\cite{Crainic_Salazar_15}). However, add extra structure, or perturb the underlying `integrability condition', and these structures generally become f\/inite-type structures which presumably admit a~Cartan geometry interpretation. Moreover, adding extra structure, such as a compatible metric, may shed light on the underlying inf\/inite-type structure, in the same way that equipping a Riemann surface with a~metric sheds light on the underlying topology of the surface.

Secondly, quite a large number of examples of transitive geometric structures~-- including conformal, CR, and projective structures~-- are of so-called {\it parabolic} type, a notion cast very decidedly in Ehresmann's formulation of a Cartan geometry (including model data)~\cite{Cap_Slovak_09}. It would be worthwhile f\/inding an axiomatization of parabolic geometries in terms of Lie groupoids or Lie algebroids. An intriguing observation in this direction is that the Atiyah algebroid of a~parabolic geometry is also a twisted Courant algebroid~\cite{Armstrong_07,Armstrong_Lu_12,XuXiaomeng_13}.

\subsection[Multiplication in $J^1 G$ and parallel translation]{Multiplication in $\boldsymbol{J^1 G}$ and parallel translation}\label{mmm}

\looseness=-1 In making a proof of Theorem \ref{theorem1.2}, we have been led to make a detailed investigation of the multiplicative structure of~$J^1G$, which is given in Section \ref{sec3}. While not hard to guess at this description~-- at the level of the bisection group, it is a~semidirect product~-- in writing out details the following technical problem arises: Given two paths in a Lie groupoid $g(t), h(t) \in G$ such that the product $g(t)h(t)$ is always def\/ined, compute the time derivative of the product $g(t)h(t)$ in terms of the time derivatives of the individual factors. In the category of Lie {\em groups}, a~trivial application of the chain rule solves the problem, but this application rests on the fact that $g(t)h(s)$ is def\/ined for $s\neq t$, which needn't be the case for arbitrary Lie groupoids. The solution adopted here is to parallel translate the factors using an appropriate connection~$D$ on the groupoid.

As we show in Section \ref{gg}, if one inf\/initesimalizes the parallel translation associated with a~{\em Cartan} connection $D$ on $G$, then
one recovers the corresponding inf\/initesimal Cartan connection~$\nabla$ on~${\mathfrak g} $ def\/ined in Section~\ref{hsd} above. This alternative
characterization of~$\nabla $ is the key to our proof of Theorem~\ref{theorem1.2}.
\vspace{-1mm}

\section{Classical Cartan connections} \label{ugt}

We now construct, for each classical Cartan connection $\omega $ on a~principal $H$-bundle $P$, a Cartan connection $S^\omega$ on the gauge groupoid $(P \times P)/H$. A converse construction, completing the proof of Theorem~\ref{theorem1.1}, is postponed until Section~\ref{recover}. In Section~\ref{foobar}
we describe the inf\/initesimalization of $S^\omega$, a Cartan connection $\nabla^\omega $ on the Atiyah Lie algebroid~$TP/H$ of~$P$.

In this section only, $G$ denotes a Lie {\em group}, and not a Lie groupoid.

\subsection{Classical Cartan connections}\label{classical}
Let $H$ be a Lie group with Lie algebra ${\mathfrak h}$, let $\pi \colon P \rightarrow M$ be a right principal $H$-bundle, and suppose~$P$ is equipped with a~parallelism $\omega $. Then $\omega $ is called a {\it Cartan connection} on $P$ if:
\begin{enumerate}\itemsep=-0.5pt
\item[(1)] The inf\/initesimal generators $\xi^\dagger$, $\xi \in {\mathfrak h}$, of $H$ are $\omega$-parallel.
\item[(2)] The space of $\omega$-parallel vector f\/ields is invariant under the canonical right action of $H$ on the space of all vector f\/ields (the action by pullback).
\end{enumerate}
In this article, to distinguish this use of the term from the one given previously, we will call $\omega $ a {\it classical Cartan connection}.

We may understand the parallelism $\omega $ as a one-form on $P$ taking values in some vector space $V $, and whose restriction to each tangent space is an isomorphism onto $V$. Since we may identify~$V$ with the space of $\omega $-parallel vector f\/ields (the ones on which~$\omega $ is constant) condition~(1) realizes~${\mathfrak h}$ as a subpace of $V$ and we obtain the tautology
\begin{enumerate}\itemsep=-0.5pt
\item[(3)] $\omega(\xi^\dagger(p))=\xi$, for all $\xi \in {\mathfrak h}$ and $p \in P$. 
\end{enumerate}
On the other hand, condition~(2) means that $V$ becomes a~right representation of $H$, and this action extends the adjoint action in the sense that $\xi h = \Adjoint_{h^{-1}} \xi $ whenever $\xi \in {\mathfrak h} $. Tautologically, then, we also have the equivariance condition
\begin{enumerate}\itemsep=0pt
\item[(4)] $\omega (vh)= \omega (v)h$ for all $h \in H$, $v \in TP$. 
\end{enumerate}
More commonly, one {\em prescribes} a vector space $V$, an inclusion ${\mathfrak h} \subset V$ and a right representation of $H$ on $V$ extending the adjoint action, and def\/ines a Cartan connection as a~parallelism $\omega \colon TP \rightarrow V$ satisfying~(3) and~(4). However, in this case it is easy to see that~(3) and (4) imply~(1) and~(2) and that consequently the two formulations are equivalent.

For example, if we take $H$ to be a closed subgroup of some Lie group $G$, acting on $G$ from the right, then we may take $V={\mathfrak g}$, where ${\mathfrak g} $ is the Lie algebra of $G $, and take $H$ to act on ${\mathfrak g} $ by the adjoint action: $\xi h := \Adjoint_{h^{-1}} \xi $, $\xi \in {\mathfrak g}$. Then the left-invariant Mauer--Cartan form $\omega $ on $G$ is a~Cartan connection on the principal bundle $G \rightarrow G/H$ satisfying the well-known Mauer--Cartan equation $d \omega - \frac{1}{2}\omega \wedge \omega = 0$, where a wedge implies a contraction using the bracket on~${\mathfrak g} $.\footnote{We def\/ine brackets on Lie algebras and Lie algebroids using {\em right}-invariant vector f\/ields.}

\subsection{Cartan connections with model data}
Generalizing the previous example, we may consider Cartan connections on an arbitrary principal $H$-bundle $P \rightarrow M$ for which $V={\mathfrak g} $ and $H$ acts via restricted adjoint action. One refers to $G/H$ as the {\it homogeneous model} and the extra data (the Lie group $G$ and the embedding $H \subset G$) as {\it model data}. With model data prescribed, one def\/ines the {\it curvature} of $\omega $ by $\Omega = d \omega - \frac{1}{2} \omega \wedge \omega
$, where the wedge again implies a contraction using the bracket on ${\mathfrak g}$ (part of the model data!). If $\Omega =0$, then~$M$ can be identif\/ied, at least locally, with the homogeneous model~$G/H$ \cite[Theorem~5.3]{Sharpe_97}. More generally, this leads one to regard $M$ with the given data as a homogeneous space `deformed by curvature'.

Two points must be emphasised when comparing this `deformed by curvature' interpretation with the analogous interpretation of Cartan connections on Lie groupoids: First, the homogeneous model $G/H$ is specif\/ied {\em a priori}; in particular, if $M$ happens to be a {\em different} homogeneous space, then generally $\Omega \ne 0$. Secondly, only {\em transitive} symmetry can follow from the `f\/latness' ($\Omega=0$) of a classical Cartan connection.

In fact, since every classical Cartan connection determines a Cartan connection on a Lie groupoid, there is already a notion of curvature which does not depend on the prescription of model data, but its interpretation is dif\/ferent: in the transitive case this curvature vanishes when~$M$ is locally isomorphic to {\em some} homogeneous space, not necessarily a model f\/ixed beforehand. See also~\cite{Blaom_13} and Remark~\ref{final-remark}. 

The literature devoted to classical Cartan connections which include model data is large. The reader is referred to the book~\cite{Sharpe_97} for an introduction, and to~\cite{Cap_Slovak_09} for a more recent survey. Note that an important work in which model data is {\em not} prescribed is Morimoto's construction of Cartan connections for f\/iltered manifolds~\cite{Morimoto_93}.

\subsection{Cartan connections from classical Cartan connections}\label{grr}
The notion of a parallelism on a smooth manifold $P$ can be reformulated using Lie groupoid language as follows: Let $S \colon P \times P \rightarrow J^1 (P \times P)$ be any Cartan connection on the pair groupoid $P \times P$ in the sense of Section~\ref{onepoint}. Note that elements of $J^1 (P \times P)$ may be understood as one-jets of local transformations $\phi \colon U \rightarrow V$ of $P$. Now $J^1 (P \times P)$, and hence the Lie subgroupoid $G=S(P \times P)$, acts canonically on the tangent bundle $TP$. Call a vector f\/ield on $P$ {\it parallel} if it is $G$-invariant, and denote the $n$-dimensional space of parallel vector f\/ields by $V,$ $n=\dim P$. The tautological map $\omega \colon TP \rightarrow V$ satisfying $\omega(X(m))=X$ for all $X \in V$ is a parallelism, in sense of Section~\ref{classical}, and evidently all parallelisms arise in this way.

Now suppose $\pi \colon P \rightarrow M$ is a right principal $H$-bundle and let $H$ act on $J^1 (P \times P)$ from the right by Lie groupoid morphisms according to
\begin{gather*}\big(J^1_p \phi\big)h =J^1_{ph}\big( R_h \circ \phi \circ R_{h^{-1}}\big),\end{gather*} where
$R_h(p):=ph$, and let $J^1 (P\times P)^{{\mathfrak h}} \subset J^1 (P \times P)$ denote the subset consisting of one-jets of local transformations $\phi $ of $P$ that are inf\/initesimally $H$-equivariant, which means $\phi^* \xi^\dagger = \xi^\dagger$, for all $\xi $ in ${\mathfrak h}$. Then $J^1(P \times P)^{\mathfrak h}$ is a transitive $H$-invariant Lie subgroupoid of $ J^1(P \times P)$, as is readily verif\/ied.
\begin{Proposition}
The parallelism $\omega \colon TP \rightarrow V$ defined by a Cartan connection $S$ on $P \times P$ is a classical Cartan connection on~$P$ if and only if~$S$ satisfies the following conditions:
 \begin{enumerate}\itemsep=0pt
 \item[$(1)$] $S \colon P \times P \rightarrow J^1 (P \times P)$ takes values in $J^1(P \times P)^{{\mathfrak h}} $.
 \item[$(2)$] ${S(qh,ph)}= S(p,q) h$ for $p,q \in P$ and $h \in H$ $(H$-equivariance$)$.
\end{enumerate}
\end{Proposition}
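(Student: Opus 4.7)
My plan is to handle the two claimed equivalences separately, relying on the dictionary of Section~\ref{grr}: a vector field $X$ on $P$ lies in $V$ if and only if it is invariant under the subgroupoid $G := S(P\times P)$; equivalently, every local transformation $\phi$ with $J^1_p \phi \in G$ satisfies $T_p\phi \circ X = X \circ \phi$ near $p$. Under this correspondence the two conditions of Section~\ref{classical} become statements about how $G \subset J^1(P\times P)$ interacts with the right $H$-action. For the first one, $\xi^\dagger$ is $\omega$-parallel iff every $S(p,q) = J^1_p\phi$ in $G$ satisfies $\phi^*\xi^\dagger = \xi^\dagger$ along the germ at $p$; asked for all $\xi \in {\mathfrak h}$ this is exactly the infinitesimal $H$-equivariance of $\phi$, and since $(p,q)$ sweeps out $P \times P$, it amounts to $S$ taking values in $J^1(P\times P)^{{\mathfrak h}}$, which is condition~(1).

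For the second equivalence, I assume condition~(1) has been established and reformulate the $H$-invariance of $V$ as a constraint on $S$. The pullback reads $(R_h^*X)(p) = T_{ph}R_{h^{-1}}(X(ph))$, and $G$-invariance of $R_h^*X$ at $S(p,q) = J^1_p\phi$ says $T_p\phi((R_h^*X)(p)) = (R_h^*X)(q)$. Applying $TR_h$ to both sides and using the chain rule converts this to
\[
T_{ph}(R_h \circ \phi \circ R_{h^{-1}})(X(ph)) = X(qh).
\]
The left-hand side is exactly the action on $X(ph)$ of $S(p,q)h = J^1_{ph}(R_h \phi R_{h^{-1}})$ as prescribed by the $H$-action on $J^1(P\times P)$. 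Meanwhile, by the very definition of $S$ from $V$, $S(ph,qh)$ is the unique one-jet with source $ph$ and target $qh$ whose tangent map sends $X(ph) \mapsto X(qh)$ for every $X \in V$; uniqueness is immediate because the parallel fields exhaust $T_{ph}P$ pointwise. The equality $S(p,q)h = S(ph,qh)$ is therefore equivalent to $R_h^*X$ being parallel for every $X \in V$, i.e., to condition~(2).

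The only real obstacle is bookkeeping: the letter $h$ occurs simultaneously as the right translation $R_h$ on $P$, as pullback on vector fields, and in the prescribed right $H$-action on $J^1(P\times P)$, so one must unwind the chain-rule expansion of $T(R_h \circ \phi \circ R_{h^{-1}})$ carefully enough to recognize it as the tangent map underlying $S(p,q)h$. Once this is in place, the argument is a formal consequence of the $V \leftrightarrow G$-invariance correspondence and of the uniqueness of $S$ determined by the parallelism.
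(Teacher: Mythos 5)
Your argument is correct, and since the paper leaves this proof to the reader it is precisely the intended one: translate ``$\omega$-parallel'' into invariance under $G=S(P\times P)$, observe that condition (1) of Section~\ref{classical} for all $\xi\in{\mathfrak h}$ is pointwise exactly membership of $S$ in $J^1(P\times P)^{\mathfrak h}$, and deduce condition (2) from the fact that the values $X(ph)$, $X\in V$, span $T_{ph}P$, so a one-jet over a given arrow is determined by its action on parallel fields. The only caveat is notational: your source/target ordering for arrows of $P\times P$ is opposite to the paper's convention (3), so your identity $S(ph,qh)=S(p,q)h$ matches the displayed equivariance condition only after that relabelling.
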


The easy proof is left to the reader.

The key observation that allows one to drop a Cartan connection $S$ on $P \times P $ to a Cartan connection on the gauge groupoid $(P \times
P)/H$ is the following:

\begin{Lemma}\label{lemma2.2}
 As Lie groupoids over $M$, we have $J^1((P \times P)/H)\cong J^1(P \times P)^{{\mathfrak h}}/H$.
\end{Lemma}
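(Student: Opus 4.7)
The plan is to show that both $J^1((P \times P)/H)$ and $J^1(P \times P)^{\mathfrak h}/H$ are canonically identified with the quotient $J^1(P \times P)^H/H$, where $J^1(P \times P)^H \subset J^1(P \times P)^{\mathfrak h}$ denotes the subset of one-jets of genuinely $H$-equivariant local diffeomorphisms of $P$. One identification comes from descent along $\pi \colon P \to M$; the other is a lifting argument peculiar to infinitesimal equivariance.

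For the first, a local bisection of the gauge groupoid over $U \subset M$ is precisely an $H$-equivariant local diffeomorphism $\tilde\phi\colon \pi^{-1}(U) \to \pi^{-1}(V)$ covering a local diffeomorphism of $M$. Equivariance forces the one-jet of the associated bisection at $m \in U$ to be determined by the one-jet of $\tilde\phi$ at any single point $p \in P_m$, while different choices of $p$ differ by the right $H$-action on $J^1(P \times P)$ recalled just before the lemma; this already yields $J^1((P \times P)/H) \cong J^1(P \times P)^H/H$.

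The crux is then that the inclusion $J^1(P \times P)^H \hookrightarrow J^1(P \times P)^{\mathfrak h}$ is an equality of underlying sets: every infinitesimally equivariant one-jet is the one-jet of an $H$-equivariant lift. Given $J^1_p \phi \in J^1(P \times P)^{\mathfrak h}$, I would choose a local section $\sigma\colon U \to P$ of $\pi$ with $\sigma(\pi(p))=p$ and set $\tilde\phi(\sigma(x) h) := \phi(\sigma(x)) h$ on a saturated neighborhood of $p$. Then $\tilde\phi$ is $H$-equivariant by construction, and decomposing $T_pP = T_{\pi(p)}\sigma(T_{\pi(p)} M) \oplus \{\xi^\dagger(p) : \xi \in {\mathfrak h}\}$, one checks $T_p\tilde\phi = T_p\phi$ on both summands---trivially on the first, and on the second via the infinitesimal equivariance condition $T_p\phi(\xi^\dagger(p)) = \xi^\dagger(\phi(p))$---so that $J^1_p\tilde\phi = J^1_p\phi$.

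Combining these two steps gives the set-theoretic isomorphism. Upgrading it to an isomorphism of Lie groupoids over $M$ is routine but requires checking that the right $H$-action on $J^1(P \times P)^{\mathfrak h}$ is free and proper, so that the quotient carries a canonical smooth Lie groupoid structure. Freeness is immediate because $H$ acts freely on the jet's basepoint in $P$, and properness is inherited from the proper action of $H$ on $P$; compatibility with source, target, units and composition then follows from the naturality of all identifications, and smoothness of the bijection and its inverse from the smooth dependence of the lift $\tilde\phi$ on $(J^1_p\phi, \sigma)$. The chief obstacle is this manifold-level verification rather than the lifting argument itself.
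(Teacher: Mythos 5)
Your proof is correct, and in substance it is the paper's argument reorganized: the paper directly constructs a map $A\colon J^1(P\times P)^{\mathfrak h}/H \to J^1((P\times P)/H)$ by choosing a local right-inverse $m\mapsto p(m)$ of $\pi$ and setting $b(m)=(\phi(p(m)),p(m))\,\mathrm{mod}\,H$, and its surjectivity step builds exactly your equivariant lift via $\phi(p(m)h):=q(m)h$. Your factorization through the intermediate object $J^1(P\times P)^H$ is a cleaner packaging of the same constructions, and it buys one genuine simplification at the crux: where the paper must invoke the fact that infinitesimal $H$-equivariance integrates to local $H$-equivariance (its comment~(4), needed to show $A$ is independent of the choice of section and of representative), you avoid integration entirely by producing an honestly $H$-equivariant representative with the \emph{same one-jet}, verified by a pointwise splitting of $T_pP$ into the image of $T\sigma$ and the vertical space and by matching the two tangent maps on each summand. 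This makes the set-level content of the lemma --- that $J^1(P\times P)^{\mathfrak h}$ consists precisely of one-jets of genuinely equivariant maps --- explicit, which the paper leaves implicit. Both proofs defer the same routine verifications (that the $H$-action on jets is free and proper, and that the resulting bijection respects the groupoid structure), so no gap arises from your doing likewise.
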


The following comments will help the reader follow the proof:
\begin{enumerate}\itemsep=0pt
\item[(3)] Our convention is to regard the source map of the pair groupoid $P \times P$ as $(q,p) \mapsto p$.
\item[(4)]
Given an element $J^1 _p \phi \in J^1(P \times P)^{\mathfrak h}$ represented by some inf\/initesimally $H$-equivariant $\phi $, the maps $R_h \circ \phi$ and $\phi \circ R_h$ have the same germ at $p$, for all $h \in H$ in some neighborhood of the identity. This is because inf\/initesimal
 $H$-equivariance implies local $H$-equivariance. (Consequently, for all $H $ suf\/f\/iciently close to the identity, we simply have $(J^1_p \phi)h = J^1 _{ph} \phi $.)
\item[(5)]
Viewing a bisection of $(P \times P)/H$ as a right inverse for the source projection $(q,p)\modulo H \mapsto \pi(p)$, the local triviality of the bundle $\pi \colon P \rightarrow M$ allows us to write such a bisection in the form $b(m)=(q(m),p(m))\modulo H$, for some smooth local right-inverse \mbox{$m \mapsto p(m)$} of $\pi \colon P \rightarrow M $, and some smooth map $m \mapsto q(m)$, where the composite $\pi \circ q$ is a~dif\/feo\-mor\-phism onto its image.
\end{enumerate}
\begin{proof}[Proof of Lemma~\ref{lemma2.2}]
 A map $A \colon J^1(P \times P)^{{\mathfrak h}}/H \rightarrow J^1((P \times P)/H) $ is def\/ined as follows. Given an element $J^1_{p_0} \phi \modulo H \in J^1(P \times P)^{\mathfrak h} / H$ represented by some inf\/initesimally $H$-equivariant local transformation $\phi $ of~$P$, f\/irst choose a local right-inverse $m \mapsto p(m)$ for $\pi \colon P \rightarrow M$ satisfying $p(m_0)=p_0$. Then we put
 \begin{gather*} A\big(J^1_{p_0} \phi \modulo H\big) = J^1_{m_0}b,\end{gather*}
 where $m_0=\pi(p_0)$ and $b$ is the local bisection of $(P \times P) /H$ def\/ined by
 \begin{gather*}
 b(m)=\big(\phi(p(m)), p(m)\big) \modulo H.
 \end{gather*}

Let $m \mapsto \bar p(m)$ be a second local right-inverse for $\pi$ satisfying $\bar p(m_0)=p_0$. Then $\bar p(m)=p(m)h(m)$, for some smooth map $m \mapsto h(m) \in H$ satisfying $h(m_0)=\identity$. For all $m$ suf\/f\/iciently close to $m_0$ we have $\phi(p(m)h(m))=\phi(p(m))h(m)$, by~(4), in which case $(\phi(\bar p (m)), \bar p(m)) \modulo H $ $= (\phi(p (m)), p(m)) \modulo H$, which shows that the def\/inition of $b$ is (up to germ equivalence at~$m_0$) independent of the choice of $m \mapsto p(m)$.

To show that the def\/inition of $A(J^1_{p_0} \phi \modulo H)$ is independent of the choice of representative~$J^1_{p_0} \phi$, it is enough to observe: (i)~if $J^1_{\bar p_0} \bar \phi $ is a second representative then the one-jet of~$\bar \phi $ at~$\bar p_0$ coincides with the one-jet of $R_h \circ \phi \circ R_{h^{-1}}$, where $h \in H$ is determined by $\bar p_0=p_0h$; and (ii)~in that case a local right-inverse $m \mapsto \bar p(m)$ for~$\pi$ satisfying $\bar p (m_0) = \bar p_0$ is given by $\bar p(m)=p(m)h$, where~$p(m)$ is as above.

We leave it to the reader to show that $A$ is a Lie groupoid morphism. To show $A$ is injective, let $J^1_{m_0}b \modulo H$ be an element of $J^1 ((P \times P) /H)$ with $b$ as in~(5). Since each point of $P$ in a neighborhood of $p_0:=p(m_0)$ is of the form~$p(m)h$, for some uniquely determined $h \in H$ and \mbox{$m \in M$}, a local transformation $\phi $ of $P$ is well-def\/ined by $\phi(p(m)h)=q(m)h$. Then $\phi $ is $H$-equivariant by construction, and hence inf\/initesimally $H$-equivariant, and we have $A(J^1_{p_0}\phi \modulo H)=J^1_{m_0}b$.

The proof that $A$ is injective is no more dif\/f\/icult and is omitted.
\end{proof}

Now let $\omega $ be a classical Cartan connection, understood as the parallelism def\/ined by a Cartan connection $S$ on $P \times P$ satisfying~(1) and~(2). By the lemma, a Cartan connection $S^\omega$ on the quotient $(P \times P)/H $ is the same thing as a Lie groupoid morphism
\begin{gather*}S^\omega \colon \ (P \times P)/H \rightarrow J^1(P \times P)^{\mathfrak h} / H\end{gather*}
furnishing a right-inverse for the canonical projection $J^1(P \times P)^{\mathfrak h} / H \rightarrow (P \times P)/H$. By the preceding proposition, such a~connection is well-def\/ined by
\begin{gather*}S^\omega\big((p,q) \modulo H\big) = S(p,q) \modulo H, \qquad p,q \in P.\end{gather*}

\subsection{Inf\/initesimalization of a classical Cartan connection}\label{foobar}
Let $\omega \colon P \rightarrow V$ be a classical Cartan connection on a principal $H$-bundle as above. Then the inf\/initesimalization of the Cartan connection $S^\omega $ is a Cartan connection $\nabla^\omega$ on the Atiyah Lie algebroid $TP/H$. To show how $\nabla^\omega$ is related to $\omega$, let $\bar\nabla $ denote the inf\/initesimal parallelism determined by $\omega $, i.e., the f\/lat Koszul connection on $TP$ for which the notions of $\omega $-parallel and $\bar\nabla$-parallel coincide; $\bar\nabla $ is characterised by the identity
\begin{gather}
 \omega(\bar\nabla_XY)=d (\omega(Y))(X), \qquad X,Y \in \Gamma(TP).\label{lisk}
\end{gather}
\begin{Lemma} Let $X$ and $Y$ be $H$-invariant vector fields on $P$. Then:
 \begin{enumerate}\itemsep=0pt
 \item[$(1)$] $\bar\nabla_XY$ is $H$-invariant.
 \item[$(2)$] $\bar\nabla_XY=[X,Y]$ whenever $Y$ is vertical.
 \end{enumerate}
\end{Lemma}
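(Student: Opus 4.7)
Both assertions will follow from the defining identity \eqref{lisk} by an analysis of how $\omega$ interacts with $H$-invariance.

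\textbf{For part (1).} The plan is to exploit that, by~(4), $\omega$ intertwines the tangential action of $H$ on $TP$ with its representation on $V$. If $Y$ is $H$-invariant, meaning $Y(ph)=Y(p)h$, then $(4)$ gives $\omega(Y)(ph)=\omega(Y)(p)h$, so the $V$-valued function $\omega(Y)$ is $H$-equivariant. Differentiating this equivariance along $X$ and using the $H$-invariance of $X$, one checks that $d(\omega(Y))(X)\colon P\to V$ is itself $H$-equivariant. Applying \eqref{lisk} pointwise at $p$ and at $ph$ and using that $\omega$ is a fibrewise isomorphism intertwining the actions (again by~(4)), this is exactly the statement that $\bar\nabla_X Y$ is $H$-invariant.

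\textbf{For part (2).} The key preliminary observation is that $H$-invariance of $X$ is equivalent to $[\xi^\dagger,X]=0$ for every $\xi\in{\mathfrak h}$: since $H$-invariance says $R_h^* X=X$ for all $h$, differentiating at $h=\exp(t\xi)$, $t=0$, gives $\mathcal{L}_{\xi^\dagger}X=0$. Now write $Y$ in terms of a basis $\{e_i\}$ of ${\mathfrak h}$ as
\begin{gather*}
 Y=\sum_i f_i\, e_i^\dagger,\qquad f_i\in C^\infty(P),
\end{gather*}
which is possible because $Y$ is vertical. By condition~(3), $\omega(Y)=\sum_i f_i e_i$ as a function $P\to{\mathfrak h}\subset V$. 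Expanding the Lie bracket and using $[X,e_i^\dagger]=0$ from the preliminary observation,
\begin{gather*}
 [X,Y]=\sum_i (Xf_i)\, e_i^\dagger + \sum_i f_i [X,e_i^\dagger] = \sum_i (Xf_i)\, e_i^\dagger.
\end{gather*}
Applying $\omega$ and invoking~(3) once more gives $\omega([X,Y])=\sum_i(Xf_i)e_i = X(\omega(Y)) = d(\omega(Y))(X)$, which by \eqref{lisk} equals $\omega(\bar\nabla_X Y)$. Since $\omega$ is fibrewise an isomorphism, $[X,Y]=\bar\nabla_X Y$.

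The argument is essentially bookkeeping; the only real content is the identification $[\xi^\dagger,X]=0$ for $H$-invariant $X$, together with the twin facts from condition~(1)--(3) that $\omega(\xi^\dagger)$ is the constant $\xi$. If there is an obstacle it is purely notational: one must keep straight the distinction between $H$-invariance of a vector field on $P$ and the $H$-equivariance of the induced $V$-valued function obtained by applying~$\omega$, so that~(4) is applied in the right direction at each step.
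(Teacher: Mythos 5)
Your proof is correct. For part (1) you follow essentially the paper's (sketched) route: the equivalence ``$Y$ is $H$-invariant $\Leftrightarrow$ $\omega(Y)$ is $H$-equivariant'' via axiom (4), plus the observation that differentiating an equivariant $V$-valued function along an invariant vector field yields an equivariant function, then transporting back through \eqref{lisk}. For part (2) your route is genuinely different. The paper reduces the claim to showing that the Lie derivative ${\mathcal L}_X\omega$ annihilates vertical vectors (since $\omega(\bar\nabla_XY)={\mathcal L}_X(\omega(Y))=({\mathcal L}_X\omega)(Y)+\omega([X,Y])$), and verifies this by computing $\frac{d}{dt}\omega\big(T\Phi_X^t\cdot\xi^\dagger(p)\big)\big|_{t=0}$ using the fact that the flow of $X$ commutes with the $H$-action and that $\omega(\xi^\dagger)\equiv\xi$. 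You instead expand $Y=\sum_i f_i e_i^\dagger$ in the global frame of fundamental vector fields, kill the terms $f_i[X,e_i^\dagger]$ via the infinitesimal form $[\xi^\dagger,X]=0$ of $H$-invariance, and compare $\omega([X,Y])$ with $d(\omega(Y))(X)$ term by term using axiom (3). The two arguments rest on exactly the same two inputs (constancy of $\omega(\xi^\dagger)$ and commutation of $X$ with the $H$-action, in flow form for the paper and bracket form for you); yours is more elementary and makes visible that part (2) uses only the verticality of $Y$, not its $H$-invariance, while the paper's is coordinate-free and isolates the reusable statement that ${\mathcal L}_X\omega$ vanishes on the vertical bundle. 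Both are complete; the only sketched step in yours, the equivariance of $d(\omega(Y))(X)$, is a one-line chain-rule check using linearity of the $H$-action on $V$, which matches the level of detail the paper itself leaves to the reader.
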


\begin{proof}
Since $\omega $ is $H$-equivariant, a vector f\/ield on $P$ is $H$-invariant if and only if the $V$-valued function $\omega(X)$ is $H$-equivariant. Given this, and using the fact that the time-$t$ f\/low map of an $H$-invariant vector f\/ield commutes with the action of~$H$, it is not hard to prove~(1). To prove~(2)
 it suf\/f\/ices, by \eqref{lisk}, to prove that the Lie derivative ${\mathcal L}_X \omega$ vanishes on vertical vectors. To this end, let $\xi \in {\mathfrak h} $ and $p \in P$ be given. Then, denoting the time-$t$ f\/low map of $X$ by $\Phi_X^t$, we compute{\samepage
 \begin{align*}
 ({\mathcal L}_X \omega)(\xi^\dagger(p))&=\frac{d}{dt} \omega \left( T \Phi_X^t \cdot \frac{d}{ds}p\exp(s \xi)\Big|_{s=0}\right)\Big|_{t=0}\\
 &=\frac{d}{dt} \omega \left(\frac{d}{ds} \Phi_X^t(p\exp(s \xi))\Big|_{s=0}\right)\Big|_{t=0}\\
 &=\frac{d}{dt} \omega \left(\frac{d}{ds} \Phi_X^t(p) \exp(s \xi)\Big|_{s=0}\right)\Big|_{t=0}\\
 &=\frac{d}{dt} \omega\big(\xi^\dagger\big(\Phi_X^t(p)\big)\big)\Big|_{t=0}=\frac{d}{dt}\xi \Big|_{t=0}=0.
 \end{align*}
 The third equality holds because the time-$t$ f\/low map of $X$ commutes with the action of $H$.}
\end{proof}

Now the space of $H$-invariant vector f\/ields on $P$ is in one-to-one correspondence with the sections of the Atiyah Lie algebroid $TP/H$. By part~(1) of the lemma, the f\/lat $TP$-connection~$\bar\nabla $ therefore drops to a representation $\bar\nabla^\omega$ of the Lie algebroid $TP/H$ on itself. According to part~(2) of the lemma, we have, for sections $X$, $Y$ of $TP/H$
\begin{gather*}
 \bar\nabla^\omega_XY=[X,Y],
\end{gather*}
whenever $\#Y=0$. Here $\#$ denotes the anchor of $TP/H$ and $[\cdot,\cdot]$ the canonical bracket on its sections. It follows that there is a $TM$-connection $\nabla^\omega$ on $TP/H$ def\/ined implicitly by
\begin{gather*}
 \bar\nabla_X^\omega Y = \nabla^\omega_{\#Y}X +[X,Y].
\end{gather*}
The proof of the following is left as an exercise for the reader:
\begin{Proposition}
 The connection $\nabla^\omega $ is the infinitesimalization of the Cartan connection $S^\omega$.
\end{Proposition}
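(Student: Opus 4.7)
The plan is to reduce via Lemma~\ref{lemma2.2} to the pair groupoid, compute the infinitesimalization of $S\colon P\times P\to J^1(P\times P)$ explicitly, and then descend. Because the $S$-invariant vector fields on $P$ are precisely the $\omega$-parallel ones and $\omega$ identifies every tangent space with the common space $V$ of parallel fields, the linear part of $S(q,p)$ must be $\omega_q^{-1}\circ\omega_p\colon T_pP\to T_qP$. In particular, for each $\omega$-parallel vector field $W\in\Gamma(TP)$ with flow $\phi^t_W$, the bisection $p\mapsto(\phi^t_W(p),p)$ is sent by $S$ to the canonical jet extension $J^1\phi^t_W$, since the tangent map of $\phi^t_W$ at $p$ is exactly $\omega_{\phi^t_W(p)}^{-1}\circ\omega_p$.

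Step~1 is to identify the infinitesimalization $s\colon TP\to J^1 TP$ of $S$ with $\bar\nabla$ under the convention $sX=J^1X+\bar\nabla X$. Differentiating the previous formula at $t=0$ gives $sW=J^1W$ for each parallel $W$, hence $\bar\nabla W=0$; this matches~(\ref{lisk}) since $\omega(W)$ is constant. A basis of $V$ yields a global frame $W_1,\dots,W_n$ for $TP$, and since $s$ is $\mathcal C^\infty(P)$-linear as a vector bundle morphism, the values $sW_i=J^1W_i$ determine $s$ completely. A brief computation using the sign convention $df\otimes X\mapsto fJ^1X-J^1(fX)$ then shows that the resulting Koszul connection sends $f^iW_i$ to $df^i\otimes W_i$, which agrees with~(\ref{lisk}).

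Step~2 is the descent. Differentiating Lemma~\ref{lemma2.2} identifies the Lie algebroid of $J^1((P\times P)/H)$ with $J^1(TP/H)$ and that of $(P\times P)/H$ with $TP/H$, so the infinitesimalization $s^\omega$ of $S^\omega$ is simply the descent of $s$, and part~(1) of the preceding lemma shows that the associated self-representation of $TP/H$ is exactly $\bar\nabla^\omega$. Writing $s^\omega X = J^1X+\nabla' X$ for the $TM$-connection $\nabla'$ extracted from $s^\omega$, the general correspondence for infinitesimal Cartan connections on Lie algebroids relates $\nabla'$ to the self-representation via $\bar\nabla^\omega_XY = \nabla'_{\#Y}X+[X,Y]$. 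Comparing with the implicit definition of $\nabla^\omega$ in the statement forces $\nabla'=\nabla^\omega$. The main obstacle is verifying this general correspondence between splittings, self-representations and $TM$-connections while carefully tracking signs, but once the pair-groupoid computation and the descent of $\bar\nabla$ are in hand this is essentially algebraic.
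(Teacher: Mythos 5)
There is a genuine error at the start of Step~1, and it propagates. You correctly identify the linear part of $S(q,p)$ as $\omega_q^{-1}\circ\omega_p$, but the next claim --- that $S$ sends the bisection $p\mapsto(\phi^t_W(p),p)$ to $J^1\phi^t_W$ because ``the tangent map of $\phi^t_W$ at $p$ is exactly $\omega_{\phi^t_W(p)}^{-1}\circ\omega_p$'' --- is false in general. That identity says the flow of a parallel field preserves the parallelism, i.e., $({\mathcal L}_W\omega)=0$, i.e., $[W,U]=0$ for all parallel $U$; for the Maurer--Cartan parallelism on a nonabelian Lie group (the basic example in Section~\ref{classical}) this fails. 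Consequently $sW\neq J^1W$ for parallel $W$, and the Koszul connection extracted from the splitting $s$ of $J^1(TP)\to TP$ is \emph{not} $\bar\nabla$. A correct computation (compare the two paths $t\mapsto T_p\phi^t_X\cdot v$ and $t\mapsto\omega^{-1}_{\phi^t_X(p)}\omega_p(v)$ over the curve $\phi^t_X(p)$, e.g.\ via Lemma~\ref{lemma-later} or Theorem~\ref{theorem-aswedid}) gives $\nabla'_{v}X=\bar\nabla_X\tilde v-[X,\tilde v]$ at $p$, where $\tilde v$ extends $v$. This differs from $\bar\nabla_{v}X$ exactly by the torsion of $\bar\nabla$ (the structure functions of the parallelism), which is generically nonzero --- and it is precisely the formula $\nabla^\omega_{\#Y}X=\bar\nabla^\omega_XY-[X,Y]$ you are trying to prove. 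So the statement is true, but your Step~1 establishes a different (and incorrect) identification of the infinitesimalization.

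The slip also makes Steps~1 and~2 mutually inconsistent. Step~1 concludes that the splitting satisfies $sX=J^1X+\bar\nabla X$ with $\bar\nabla$ the Koszul connection of \eqref{lisk}; Step~2 then invokes the relation $\bar\nabla^\omega_XY=\nabla'_{\#Y}X+[X,Y]$ between the splitting connection $\nabla'$ and the \emph{associated self-representation} $\Adjoint\circ s^\omega$, and identifies the latter with $\bar\nabla^\omega$. But ``the Koszul connection of the splitting is $\bar\nabla$'' and ``the self-representation $\nabla'_{\#Y}X+[X,Y]$ is $\bar\nabla^\omega_XY$'' are different assertions, differing again by the torsion term; only the second is what the Proposition needs, and nothing in your argument establishes it. The repair is to redo Step~1 for an arbitrary section $X$ (not just parallel ones), taking the flow $\phi^t_X$ of $X$ and noting that $S(\phi^t_X(p),p)(v)$ has first component the $\bar\nabla$-parallel transport of $v$, whereas $T_p(\Phi^t_{X^{\mathrm R}}\circ\iota_M)\cdot v$ has first component $T_p\phi^t_X\cdot v$; their covariant derivatives at $t=0$ differ by $\bar\nabla_X\tilde v-[X,\tilde v]$, which yields $\nabla'=\nabla^\omega$ directly. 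The reduction to the pair groupoid via Lemma~\ref{lemma2.2} and the descent using part~(1) of the lemma on $H$-invariance are fine as organizational devices.
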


In particular, $\nabla^\omega $ is an inf\/initesimal Cartan connection, a fact already observed in~\cite{Blaom_06}.

\begin{Remark}\label{final-remark}
As a f\/inal remark, we observe that if we prescribe model data and def\/ine $\Omega =d \omega - \frac{1}{2}\omega \wedge \omega $, then $\nabla^\omega $ is f\/lat~-- implying $S^\omega$ def\/ines a~pseudoaction on a union of connected components of $(P \times P)/H$~-- precisely when $\bar\nabla \Omega =0$ \cite[Theorem~C]{Blaom_06}, a~condition weaker than the vanishing of $\Omega$.
\end{Remark}

\section[The multiplicative structure of $J^1 G$]{The multiplicative structure of $\boldsymbol{J^1 G}$}\label{sec3}
Let $G$ be a Lie groupoid. The Lie algebroid of $J^1 G$ can be identif\/ied with $J^1 {\mathfrak g} $, where ${\mathfrak g} $ denotes the Lie algebroid of~$G$ (see Section~\ref{repsagain} below). The canonical exact sequence,
\begin{gather}
 0\longrightarrow T^*\!M\otimes{\mathfrak g}\longrightarrow J^1{\mathfrak g}\longrightarrow{\mathfrak g}\longrightarrow 0\label{exact}
\end{gather}
may be regarded as the derivative of a natural sequence of groupoid morphisms,
\begin{gather*}
 \model\monomorphism J^1 G \rightarrow G.
\end{gather*}
Here $\model$ is a certain open neighborhood of the zero-section of $T^*\!M \otimes {\mathfrak g}$, def\/ined in Section~\ref{mod} below; if $G=M\times M$, then $\model=\automorphism({TM})$.

Corresponding to \eqref{exact} is an exact sequence of section spaces which splits canonically, leading to an identif\/ication,
\begin{gather*}
 \Gamma\big(J^1{\mathfrak g}\big)\cong \Gamma({\mathfrak g})\oplus \Gamma(T^*\!M \otimes {\mathfrak g}).
\end{gather*}
Under this identif\/ication, the Lie bracket on $\Gamma(J^1{\mathfrak g})$ is a semidirect product~\cite{Blaom_06}.

In this section we establish the global analogue of this result, namely a semidirect product structure,
\begin{gather*}
 {B}\big(J^1 G\big)\cong {B}(G)\times {B} (\model),
\end{gather*}
where ${B}(\, \cdot\,)$ denotes the group of global bisections.

Just as a choice of Cartan connection $\nabla $ on $\mathfrak g$ determines an identif\/ication
\begin{gather*}J^1{\mathfrak g}\cong {\mathfrak g} \oplus (T^*\!M \otimes {\mathfrak g}) \end{gather*}
and an associated semi-direct product structure for $J^1{\mathfrak g} $, so a choice of Cartan connection on~$G$ determines a semi-direct product structure for~$J^1 G$. While we shall provide a direct demonstration of this fact, the reader may like to interpret the existence of the semi-direct product structure as a~consequence of the fact that $J^1 G \rightarrow G$ is Lie groupoid f\/ibration, and $S$ a unital f\/lat cleavage for it \cite[Theorem~2.5.3]{Mackenzie_05} (see also \cite[Theorem~2.2.3]{del_Hoyo_Fernandes_16}). In particular, $S$ determines a~representation of $G$ on the kernel $J^1_{M}G$ which def\/ines the semi-direct product structure.

The non-trivial part of our proof amounts to f\/inding an explicit formula for the representation of $G$ on $J^1_{M}G$ using the adjoint representation of $J^1 G $ on ${\mathfrak g} $ (see Section~\ref{adj} below) and our concrete model $\model$ of $J^1_{M}G$. This description rests on a detailed analysis of multiplication in~$J^1 G$. To formulate our claims in detail, which we do in Section~\ref{mainman}, will require some preparation.

\subsection[Representing elements of $J^1 G$ and its Lie algebra]{Representing elements of $\boldsymbol{J^1 G}$ and its Lie algebra}\label{repsagain}
There are three useful representations of an element of $J^1 G$. Formally, an element of $J^1 G$ is a~one-jet at some $m \in M$ of a~local bisection ${b} \colon U \rightarrow G$ of $G$. (It is our convention to regard local bisections as locally def\/ined right-inverses for the source projection $\alpha \colon G \rightarrow M$.) If $g={b}(m)$, then the tangent map $\mu:=T_m {b} $ is a~linear map from $T_mM$ to $T_gG$ satisfying:
\begin{enumerate}\itemsep=0pt
 \item[(1)] 
 $T_m \alpha \circ \mu = \identity_{{T_mM}}$, and
 \item[(2)] 
 $T_m \beta \circ \mu\colon T_mM \rightarrow T_mM$ is invertible.
\end{enumerate}
Conversely, for any $g \in G$ with $\alpha(g)=m$, any linear map $\mu \colon {T_mM} \rightarrow T_gG$ satisfying~(1) and~(2), is the tangent map of some local bisection ${b} \colon U \rightarrow G$ whose one-jet at $m \in M$ is independent of the particular choice of ${b}$. We are therefore entitled to identity elements of $J^1 G$ with linear maps ${T_mM} \rightarrow T_gG$ with $\alpha(g)=m$ and satisfying (1) and (2), {\em and will do so by default in the sequel.}

Finally, each $\mu \colon {T_mM} \rightarrow T_gG $ as above may be identif\/ied with its image, a subspace of $T_gG$ that is simultaneously a~complement for the tangent space at $g \in G$ to the f\/ibre of the source projection $\alpha \colon G \rightarrow M$ through $g$, and the f\/ibre of the target projection $\beta \colon G \rightarrow M$ through~$g$; and all such `simultaneous complements' may be realised in this way.

Let ${\mathfrak g} := {\mathcal L} (G)$ denote the abstract Lie algebroid of $G$. Our convention is to regard an element of~${\mathfrak g} $ as a vector tangent at some $m \in M \subset G$ to a~f\/ibre of the source projection $\alpha \colon G \rightarrow M$. Let~$J^1 {\mathfrak g} $ denote the vector bundle of one-jets of sections of~${\mathfrak g} $. Then there is a natural isomorphism $\theta \colon {\mathcal L}(J^1 G) \rightarrow J^1 {\mathfrak g} $ whose inverse is described as follows: Let $J^1 _mX\in J^1 {\mathfrak g} $ be given. Then $\theta^{-1}(J^1_mX)$ will be a vector tangent to $J^1 G$ at the identity element
$T_m \iota_M \colon {T_mM} \rightarrow T_{m}M$ of $J^1 G$ (with $\iota_M \colon M \rightarrow G$ denoting the inclusion) and so may be viewed as an equivalence class of paths in $J^1 G$ passing through $T_m \iota_M $. Specif\/ically,
\begin{gather}
 \theta^{-1}\big(J^1_m X\big)=\frac{d}{dt}T_m\big(\Phi_{X^\mathrm{R}}^t\circ \iota_M\big)\Big|_{t=0}, \label{sop}
\end{gather}
where $X^\mathrm{R}$ denotes the corresponding right-invariant vector f\/ield on $G$ and $t \mapsto \Phi_{X^\mathrm{R}}^t$ its f\/low.

\subsection{The adjoint representation}\label{adj}
The adjoint representation of a Lie group $G$ is a representation of $G$ on its Lie algebra. More generally, for each Lie groupoid~$G$, one has a god-given representation of $J^1 G$ on the Lie algebroid~${\mathfrak g}$ of~$G$, which we also call the {\it adjoint representation}; it is def\/ined as follows: Let $\mu \in J_g^1G$ be given, where $g \in G$ begins at $m \in M$ and ends at $m' \in M$, and let $X \in {\mathfrak g}|_{m}$. First, notice that we may use $\mu $
to lift $X$, viewed as a vector tangent to the source-f\/ibre at $m$, to a vector $X'$ tangent to the same source-f\/ibre at $g$:
\begin{gather*}
 X' = \mu(\#X) - T L_g \cdot T I\cdot X.
\end{gather*}
Here $\#$ denotes the anchor, $I \colon G \rightarrow G$ inversion and $L_g \colon G \rightarrow G$ left-multiplication by $g$. (One has, incidentally, $T I \cdot X = \# X - X$, for all $X \in {\mathfrak g}$.) Then we def\/ine
\begin{gather*}
 \Adjoint_\mu X := T R_{g^{-1}} \cdot X' = T R_{g^{-1}} \cdot \big( \mu(\#X) - T L_g \cdot T I \cdot X\big),
\end{gather*}
where $R_g$ is right-multiplication by $g$. Notice that if $\# X =0$ (always true if $G$ is a Lie {\em group}) then this collapses to the familiar
\begin{gather*}
 \Adjoint_\mu X = T \big(R_{g^{-1}} \circ L_g\big) \cdot X.
\end{gather*}

Additionally, there is a natural representation of $J^1 G$ on $TM$, with respect to which the anchor $\# \colon {\mathfrak g}\rightarrow TM$ is equivariant; this action, also denoted $\Adjoint$, is given by
\begin{gather*}
 \Adjoint_\mu v := T \beta \cdot \mu(v), \qquad v \in T_m M,
\end{gather*}
where $\beta \colon G \rightarrow M$ is the target projection.

Finally, the two representations just def\/ined induce a representation of $J^1 G$ on $T^*\!M \otimes {\mathfrak g}$ needed later:
\begin{gather*}
 (\Adjoint_\mu \phi)v = \Adjoint_\mu \big(\phi \Adjoint_\mu^{-1} v\big), \qquad \phi \in (T^*\!M \otimes {\mathfrak g})|_m.
\end{gather*}

For the sake of completeness, we add the following observation:
\begin{Remark}
 The adjoint representation amounts to a Lie groupoid morphism
 \begin{gather*}\Adjoint \colon \ J^1 G \rightarrow \operatorname{GL}({\mathfrak g}),\end{gather*}
where $\operatorname{GL}({\mathfrak g})$ is the frame groupoid of ${\mathfrak g} $. Inf\/initesimalizing, we obtain a Lie algebroid morphism $\adjoint \colon J^1 {\mathfrak g} \rightarrow \mathfrak{gl}({\mathfrak g}) $ coinciding with the adjoint representation of~$J^1 {\mathfrak g} $ on~${\mathfrak g} $, def\/ined by
 \begin{gather*} \adjoint_{J^1 X}Y=[X,Y].\end{gather*}
See, e.g.,~\cite{Blaom_06}. Here we are identifying the abstract Lie algebroid $\mathfrak{gl}({\mathfrak g})$ of $\operatorname{GL}({\mathfrak g})$ with the Lie algebroid of derivations on ${\mathfrak g} $ according to the following convention: Given an element of the abstract Lie algebroid represented by a path $t \mapsto \phi_t \in \operatorname{GL}({\mathfrak g}) $ with~$\phi_0$ the identity on~${\mathfrak g}|_m$ and~$\phi_t$ an isomorphism from ${\mathfrak g}|_m$ to ${\mathfrak g}|_{m_t}$, for some path $t \mapsto m_t \in M$, the corresponding derivation~$\partial$ is def\/ined by
\begin{gather*}
\partial Y = \frac{d}{dt}\phi_t^{-1} Y(m_t)\Big|_{t=0}, \qquad Y \in \Gamma({\mathfrak g}).
\end{gather*}
\end{Remark}

\subsection{Recovering a classical Cartan connection}\label{recover}
We pause our development to show how a Cartan connection $S \colon G \rightarrow J^1 G$ on a {\em transitive} Lie groupoid $G$ determines a~classical Cartan connection $\omega$ on the source-f\/ibre $P$ over an arbitrary f\/ixed point $m_0 \in M$. Recall that $P$ is a left principal $H$-bundle, where $H$ is the group of arrows of $G$ simultaneously beginning at terminating at $m_0$.

From the Cartan connection $S$ we obtain a representation of $G$ on its Lie algebroid ${\mathfrak g} $: $g \xi = \Adjoint_{S(g)}\xi$. This is indeed a representation because $S \colon G \rightarrow J^1 G$ is a Lie groupoid morphism. Letting $V = {\mathfrak g}|_{m_0}$, we def\/ine $\omega \colon TP \rightarrow V$ by $\omega(v)=g^{-1} (T{R}_{g^{-1}}\cdot v)$, where $g \in P$ is the base-point of $v \in TP$. One readily verif\/ies axioms~(3) and~(4) of Section~\ref{classical}
def\/ining a~classical Cartan connection. Under the identif\/ication $G \cong (P \times P)/H$ the Cartan connection $S$ coincides with~$S^\omega$, as def\/ined in Section~\ref{grr}, as the reader is invited to check. This completes the proof of Theorem~\ref{theorem1.1}.

\subsection[A linear model of the kernel of $J^1 G \rightarrow G$]{A linear model of the kernel of $\boldsymbol{J^1 G \rightarrow G}$}\label{mod}
Let $J^1_M G \subset J^1 G$ denote the pre-image of $M \subset G$ under the natural projection $J^1 G \rightarrow G$; $J^1 _M G$ is a totally intransitive Lie groupoid. We now def\/ine a certain open neighborhood of the zero-section of $T^*\!M \otimes {\mathfrak g}$, denoted $\model$, that serves as a model of $J^1_MG$.

For each $\phi \in T^*\!M \otimes {\mathfrak g} $ with base-point $m \in M$, def\/ine $\phi^{TM} \in T^*\!M \otimes {TM}$ by
\begin{gather*}
 \phi^{TM} v = v - \# \phi v.
\end{gather*}
Then, writing $\phi^{TM} \in \automorphism({TM})$ if $\phi^{TM} \colon {T_mM} \rightarrow T_mM$ is invertible, the set
\begin{gather*}
 \model := \big\{\phi \in T^*\!M \otimes {\mathfrak g} \,|\, \phi^{TM} \in \automorphism({TM})\big\}
\end{gather*}
is an open neighborhood of the zero-section of $T^*\!M \otimes {\mathfrak g}$. Moreover, $\model$ becomes a totally intransitive Lie groupoid over $M$, with the zero-section becoming the set of identity elements, if we def\/ine multiplication by
\begin{gather}
 \psi \phi = \psi + \phi - \psi \circ \# \circ \phi = \phi + \psi \circ \phi^{TM},\label{clost}
\end{gather}
and inversion by
\begin{gather*}
 \phi \mapsto -\phi \circ \big(\phi^{TM}\big)^{-1}.
\end{gather*}
With respect to this extra structure, the map
\begin{gather*}
\phi \mapsto \phi^{TM} \colon \ \model \rightarrow \automorphism({TM})
\end{gather*}
is a groupoid morphism (in this case, just a f\/ibre-wise group homomorphism) with commutative kernel. In other words, for each $m \in M$, $\model|_m$ is simply a~commutative extension of the image of $\phi \mapsto \phi^{TM}$, a~subgroup of~$\automorphism({TM})|_m$.

Finally, note that in addition to the natural representation of $\model $ on $TM$ given by $\phi \cdot v = \phi^{TM} v$, we have representation of $\model$ on ${\mathfrak g} $ given by
\begin{gather*}
 \phi \cdot X = \phi^{\mathfrak g} X := X - \phi\# X, \qquad X \in {\mathfrak g}.
\end{gather*}
With respect to these actions of $\model$ on ${\mathfrak g} $ and~$TM$, the anchor $\#$ is equivariant:
\begin{gather*}
 \#\phi^{\mathfrak g} X = \phi^{{TM}}\# X.
\end{gather*}
According to Theorem~\ref{theorem-mainman}(1) below, we have $\model \cong J^1_M G$ as Lie groupoids.

\subsection[Multiplication and inversion in $J^1 G$]{Multiplication and inversion in $\boldsymbol{J^1 G}$}\label{mainman}

Inversion in $J^1 G$ is straight-forward to describe. We leave the proof of the following to the reader:
\begin{Proposition}[inversion in $J^1 G$]
Suppose $g \in G$ begins at $m \in M$ and ends at $m' \in M$. Then, for any $\mu \in J_g^1G$ the inverse $\mu^{-1} \in J^1_{g^{-1}}G$ is given, as a map $T_{m'}M \rightarrow T_gG$, by
 \begin{gather*}
 \mu^{-1}(v) = TI \cdot \mu\big(\Adjoint_\mu^{-1} v\big), \qquad v \in T_{m'}M.
 \end{gather*}
 Here $I \colon G \rightarrow G$ denote inversion in $G$, and ${TI} \colon TG \rightarrow TG$ its tangent map.
\end{Proposition}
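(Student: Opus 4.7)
The plan is to prove the formula by representing $\mu \in J^1_g G$ as the one-jet of a local bisection and then computing the one-jet of the inverse bisection via the chain rule. Specifically, choose a local bisection $b \colon U \to G$ with $b(m)=g$ and $T_m b = \mu$ (regarded as a map $T_m M \to T_g G$). The inverse bisection is defined near $m'=\beta(g)$ by $b^{-1}(m'') = I\bigl(b(\sigma(m''))\bigr)$, where $\sigma := (\beta \circ b)^{-1}$ is the inverse of the local diffeomorphism $\beta \circ b \colon U \to \beta(b(U))$. By definition of the groupoid inverse in $J^1 G$, one has $\mu^{-1} = T_{m'}(b^{-1})$.

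The chain rule then gives
\begin{gather*}
\mu^{-1}(v) = T_{m'}(b^{-1})\cdot v = T_{g}I \cdot T_m b \cdot T_{m'}\sigma \cdot v
= TI \cdot \mu \bigl( T_{m'}\sigma \cdot v\bigr), \qquad v \in T_{m'}M.
\end{gather*}
It remains to identify $T_{m'}\sigma$ with $\Adjoint_\mu^{-1}$. But by the very definition of the adjoint representation on $TM$ given in Section~\ref{adj}, $\Adjoint_\mu v' = T\beta \cdot \mu(v') = T_m(\beta \circ b)\cdot v'$ for $v'\in T_m M$; hence $\Adjoint_\mu = T_m(\beta \circ b)$, and taking inverses gives $T_{m'}\sigma = \Adjoint_\mu^{-1}$. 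Substituting yields the claimed formula.

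For completeness I would then verify that $\mu^{-1}$ as defined by the formula is indeed an element of $J^1_{g^{-1}}G$, i.e.\ that it satisfies the two conditions from Section~\ref{repsagain}. Using $\alpha \circ I = \beta$ and $\beta \circ I = \alpha$, one computes $T\alpha \circ \mu^{-1}(v) = T\beta \cdot \mu(\Adjoint_\mu^{-1} v) = \Adjoint_\mu \Adjoint_\mu^{-1} v = v$, so condition~(1) holds; and $T\beta \circ \mu^{-1}(v) = T\alpha \cdot \mu(\Adjoint_\mu^{-1} v) = \Adjoint_\mu^{-1} v$ (using $T\alpha \circ \mu = \identity$), which is invertible, giving condition~(2). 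Finally, one may check that the formula indeed gives a two-sided inverse either by the same bisection argument applied to $b^{-1}$, or by a direct computation using the multiplication description in $J^1 G$ developed earlier in this section.

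The only delicate point is keeping the identifications straight between $\mu$ viewed as a linear map $T_m M \to T_g G$ and $\mu$ as the one-jet of a bisection; once that correspondence is invoked, the proof reduces to an application of the chain rule to $I \circ b \circ \sigma$ together with the definition of $\Adjoint_\mu$ on $TM$. No genuine obstacle arises, which is presumably why the authors leave it to the reader.
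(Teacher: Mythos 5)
Your proof is correct, and since the paper explicitly leaves this proposition to the reader, your argument is precisely the one being asked for: represent $\mu$ as $T_m b$ for a local bisection $b$, note that the groupoid inverse in $J^1 G$ is the one-jet of the inverse bisection $b^{-1}=I\circ b\circ\sigma$ with $\sigma=(\beta\circ b)^{-1}$, apply the chain rule, and identify $T_{m'}\sigma$ with $\Adjoint_\mu^{-1}$ via the definition $\Adjoint_\mu v=T\beta\cdot\mu(v)=T_m(\beta\circ b)\cdot v$. The chain rule is legitimate here (unlike for multiplication, where the paper must introduce parallel translation) because $I\circ b\circ\sigma$ is an honest composite of smooth maps, and your verification of conditions (1) and (2) from Section~\ref{repsagain} is also correct; the only cosmetic point is that the codomain in the statement should read $T_{g^{-1}}G$ rather than $T_gG$.
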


We now state our results for multiplication. First, recall that the Lie groupoid $\model$ is totally intransitive; its bisections coincide with its sections, as an open neighborhood of the zero-section of the vector bundle $T^*\!M \otimes {\mathfrak g}$. We have seen that the adjoint representation induces an action of $ J^1 G$ on $T^*\!M \otimes {\mathfrak g} $. It is not hard to show that $\model \subset T^*\!M \otimes {\mathfrak g} $ is invariant under this action, and we obtain an action of the bisection group ${B}(G)$ on~${B}(\model)$ def\/ined by
\begin{gather*}
 ({b} \cdot\Phi)(m') = \Adjoint_{T_m{b}}\Phi(m),
\end{gather*}
where $\beta(b(m))=m'$ def\/ines $m \in M$, and
$ {b} \in {B}(G) ,\Phi \in {B}(\model), m \in M$ are arbitrary. We
may therefore form the semidirect product of groups,
\begin{gather*}
 {B}(G)\times_{\Adjoint}{B}(\model),
\end{gather*}
which has multiplication def\/ined by
\begin{gather*}
 ({b}_1, \Phi_1)({b}_2,\Phi_2)=({b}_1
 {b}_2,\Phi_1 {b}_1\! \cdot\! \Phi_2).
\end{gather*}
According to Theorem~\ref{theorem-mainman}(2) below, this semidirect product is isomorphic to $B(J^1 G)$.

Secondly, suppose $G$ admits a Cartan connection $S \colon G \rightarrow J^1 G$. Then $G$ acts on $\model$ according to $g \cdot \phi := S(g) \cdot \phi =\Adjoint_{S(g)}\phi$. With this action in hand, the pullback $G \times_\beta \model$ of the f\/ibre-bundle $\model \rightarrow M$ under the target projection
$\beta \colon G \rightarrow M$ becomes a semi-direct product of Lie groupoids: The source and target projections are respectively $(g, \phi) \mapsto \alpha(g)$ and $(g, \phi) \mapsto \beta(g)$, where $\alpha, \beta \colon G \rightarrow M$ are the corresponding projections for $G$; multiplication is def\/ined by
\begin{gather*}
 (g_1, \phi_1)(g_2, \phi_2):=(g_1 g_2, \phi_1 \Adjoint_{S(g_1)} \phi_2).
\end{gather*}
According to Theorem~\ref{theorem-mainman}(3) below, $J^1 G \cong G \times_\beta \model$ as Lie groupoids.

\begin{Theorem}\label{theorem-mainman}For an arbitrary Lie groupoid $G$:
 \begin{enumerate}\itemsep=0pt
 \item[$(1)$]
The map $\phi \mapsto \varcheck \phi \colon \model \rightarrow J^1 G$, defined by $\varcheck \phi v = v - \phi v$, is a Lie groupoid morphism and embedding, whose image is the kernel of the natural projection \mbox{$J^1 G \rightarrow G$}. This map is equivariant with respect to the natural representations of both groupoids on~$TM$, and on~${\mathfrak g}$:
 \begin{gather*}
 \Adjoint_{\varcheck\phi} v = \phi^{TM}v,\qquad \Adjoint_{\varcheck\phi} X = \phi^{ \mathfrak g} X,\qquad v \in {TM},\qquad X \in {\mathfrak g}.
 \end{gather*}
 \item[$(2)$]
The map $a \colon {B}(G)\times_{\Adjoint}{B}(\model) \rightarrow {B}(J^1 G)$ defined by
 \begin{gather*}
 a({b},\Phi)(m)=(\Phi(m'))^\vee T_m {b}, \qquad m'=\beta({b}(m)),
 \end{gather*}
 is an isomorphism of Lie groups.
 \end{enumerate}
 Here $T_m {b} \colon {T_mM} \rightarrow T_{{b}(m)}G$ is the tangent map of ${b} $ at $m$, viewed as an element of $J^1G$, and $\beta \colon G \rightarrow M$ is the target projection. Also, $(\Phi(m'))^\vee T_m {b}$ denotes the product of $(\Phi (m'))^\vee $ and~$T_m b$ in the Lie groupoid $J^1 G$.
 \begin{enumerate}\itemsep=0pt
 \item[$(3)$]
 If $S \colon G \rightarrow J^1 G$ is a Cartan connection on $G$, then the map $c \colon G \times_\beta \model \rightarrow J^1 G$ defined by $c(g,\phi)= \phi^\vee S(g)$ is an isomorphism of Lie groupoids.
\end{enumerate}
\end{Theorem}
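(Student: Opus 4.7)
My plan is to prove the three parts in order: part~(1) gives a concrete description of the kernel of $J^1 G \to G$ as an abstract Lie groupoid together with its equivariance, while parts~(2) and~(3) then follow by exploiting the resulting short exact sequence, combined with either the canonical tangent lift of a bisection (for~(2)) or the chosen splitting $S$ (for~(3)).

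For part~(1), I will first verify that $\varcheck\phi$ lies in $J^1 G$: applying $T\alpha$ and $T\beta$ to $v - \phi v$ under the decomposition $T_mG = T_mM \oplus {\mathfrak g}|_m$ at identity elements immediately yields conditions~(1) and~(2) of Section~\ref{repsagain}, and the very same computation gives the two equivariances $\Adjoint_{\varcheck\phi}v = \phi^{TM}v$ and, via the general formula of Section~\ref{adj} together with the identity $TI\cdot X = \#X - X$, $\Adjoint_{\varcheck\phi}X = \phi^{\mathfrak g}X$. The multiplicativity $\varcheck\psi\,\varcheck\phi = (\psi\phi)^\vee$ will hinge on the explicit form of tangent-groupoid multiplication at identities: for $X_i = v_i + \xi_i \in T_mG$ satisfying the composability condition $v_1 = v_2 + \#\xi_2$, one has $X_1\cdot X_2 = v_2 + \xi_1 + \xi_2$; applying this with $\xi_1 = -\psi\phi^{TM}v$, $\xi_2 = -\phi v$ and invoking \eqref{clost} will deliver the identity. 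Surjectivity onto the kernel is automatic from $\mu v - v \in \kernel T\alpha = {\mathfrak g}|_m$, and the smooth inverse is manifest, so $\phi\mapsto\varcheck\phi$ is an embedding.

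For part~(2), the key input is the canonical lift $j\colon B(G) \to B(J^1 G)$ sending $b$ to the bisection $m \mapsto T_mb$. A short calculation using the tangent of bisection multiplication, $T_m(b_1b_2)v = T_{m_2}b_1(\Adjoint_{T_mb_2}v)\cdot T_mb_2(v)$ with $m_2 = \beta(b_2(m))$, shows that $j$ is a group homomorphism. Any $\tilde b \in B(J^1 G)$ then projects to a unique $b \in B(G)$, and $\tilde b \cdot j(b)^{-1}$ is valued in the kernel $J^1_MG$, hence of the form $\Phi^\vee$ for a unique $\Phi \in B(\model)$ by part~(1); this produces an inverse for $a$ and yields bijectivity. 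The homomorphism property of $a$ will then reduce to the conjugation identity $j(b_1)\Phi_2^\vee j(b_1)^{-1} = (b_1 \cdot \Phi_2)^\vee$, which is a pointwise restatement of the equivariance of $\phi\mapsto\varcheck\phi$ together with the tautological fact that conjugation by $T_mb_1$ in $J^1 G$ implements the adjoint action on the (normal) kernel subgroupoid.

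Part~(3) should then follow rapidly: composability in the definition of $c$ is built into the pullback $G \times_\beta \model$; for bijectivity, each $\mu \in J^1_g G$ decomposes uniquely as $\mu \cdot S(g)^{-1} \cdot S(g) = \phi^\vee \cdot S(g)$ (since $\mu\cdot S(g)^{-1}$ lies in the kernel at $\beta(g)$), giving both surjectivity and injectivity. The groupoid morphism property expands to $\phi_1^\vee S(g_1) \phi_2^\vee S(g_2)$, at which point two ingredients finish the proof: the morphism property of $S$ gives $S(g_1)S(g_2) = S(g_1g_2)$, and the conjugation identity $S(g_1)\phi_2^\vee S(g_1)^{-1} = (\Adjoint_{S(g_1)}\phi_2)^\vee$ (once again a consequence of the equivariance in part~(1)) permits the needed rearrangement. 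The main obstacle throughout is the tangent-groupoid multiplication formula at identities underlying part~(1); once this computational lemma is in hand, the remaining work is essentially bookkeeping around the exact sequence $\model \monomorphism J^1 G \to G$.
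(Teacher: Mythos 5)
Your treatment of part~(1) is sound and in fact takes a shorter route than the paper: the paper deduces the multiplicativity $\varcheck\psi\,\varcheck\phi=(\psi\phi)^\vee$ from its Lemma~\ref{lemma-hotel}(1) (specialized to $g=m$), whereas you compute the product directly from the affine structure of $Tm$ at identity elements, where the formula $X_1\cdot X_2=v_2+\xi_1+\xi_2$ is indeed available because all the auxiliary products one needs there involve only identity lifts. That part of your argument would go through.

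The gap is in the step you label ``tautological'': the conjugation identity $\mu\,\varcheck\phi\,\mu^{-1}=(\Adjoint_\mu\phi)^\vee$ for a \emph{general} $\mu\in J^1_gG$, on which both your part~(2) and part~(3) rest. It is true that the kernel is normal, so $\mu\,\varcheck\phi\,\mu^{-1}=\varcheck\psi$ for some $\psi$; but identifying $\psi$ with $\Adjoint_\mu\phi$ is not a formal consequence of part~(1). What the equivariance of $\phi\mapsto\varcheck\phi$ plus the fact that $\Adjoint$ is a groupoid morphism gives you is $\Adjoint_{\varcheck\psi}=\Adjoint_\mu\Adjoint_{\varcheck\phi}\Adjoint_\mu^{-1}$ on ${\mathfrak g}$ and on $TM$, which unwinds to $\psi(\#X)=(\Adjoint_\mu\phi)(\#X)$ and $\#\psi v=\#(\Adjoint_\mu\phi)v$. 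This pins down $\psi$ only on the image of the anchor and modulo its kernel; for an intransitive groupoid (the case the paper cares about most~-- take $\#=0$ for the extreme example) it says nothing at all. To actually prove the identity you must compute a product $\mu\,\varcheck\phi$ (or $\varcheck\phi\,\mu$) with $\mu$ sitting over a non-identity arrow $g$, and this is precisely where the difficulty flagged in Section~\ref{mmm} bites: the two factors cannot be differentiated independently because $g(s)h(t)$ need not be defined for $s\ne t$. The paper resolves this with the parallel-translation argument of Sections~\ref{source}--\ref{hotel}, yielding Lemma~\ref{lemma-hotel}(1)--(3), and it is Lemma~\ref{lemma-hotel}(3) that your proof silently assumes. (One can avoid parallel translation here by decomposing $\varcheck\phi(v)=\widehat{\phi^{TM}v}+TI\cdot(\phi v)$ and observing that the second summand lies in $\kernel T\beta$, hence can be left-translated by $g$; but some such argument must be supplied~-- it is the technical core of the theorem, not bookkeeping.)
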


\subsection{Source connections and parallel actions}\label{source}
Before proving the preceding results, we introduce some terminology and notation we need when calculating the derivatives of paths in a~groupoid $G$ of the form $g(t)h(t)$ (and later when we establish a~geometric interpretation of an inf\/initesimalized Cartan connection.)

By a {\it source connection} on $G$, we shall mean a rank-$n$ subbundle $D \subset TG$ such that: (i)~$TG = D + \kernel T \alpha$; and (ii) $D(m)=T_mM$ for all $m \in M \subset G$. Here $n:=\dim(M)$ and $\alpha \colon G \rightarrow M$ is the source projection. A Cartan connection $D$, as def\/ined in Section~\ref{onepoint} above, is a~source connection. While Cartan connections need not exist in general, one can always f\/ind a~source connection locally (which will suf\/f\/ice for present purposes) and even globally if we work in the~$C^\infty$ category.

Suppose $a<0<b$. Fixing a source connection $D$ on $G$, and a smooth path $\gamma \colon (a,b) \rightarrow M$ on the base~$M$, we can def\/ine a local form of parallel translation along~$\gamma $. This parallel translation will be expressed in the language of Lie groupoid actions; see, e.g., \cite[Section~1.6]{Mackenzie_05}.

To simplify our description, we assume $\gamma $ is regular and simple, so that the pullback $\gamma^* G$ of the surjective submersion $\alpha \colon G\rightarrow M$ along $\gamma \colon (a,b) \rightarrow M$ can be identif\/ied with a submanifold of $G$, namely the pre-image of $\gamma((a,b))$ under $\alpha $. This pre-image is a union of source-f\/ibres, $P_t=\alpha^{-1}(\gamma(t))$, $a<t<b$. For any $U \subset \gamma^* G$ we write $(a,b)^2 \times_\gamma U$ for the set of all $(t_1,t_2, g) \in (a,b)^2 \times U$ such that $\alpha(g)=\gamma(t_1)$.

We claim that for any point $g_0 \in P_0 \subset \gamma^*G$ (or, more generally, any f\/inite number of points in $P_0$), there exists~-- shrinking the interval $(a,b)$ if necessary~-- an open neighborhood $U$ of $g_0$ in $\gamma^*G$, and an action of the pair groupoid $(a,b)^2$ on the restriction $\alpha \colon U \rightarrow \gamma((a,b))$, denoted $(t_1,t_2,g) \mapsto A^\gamma_{t_1,t_2}(g) \colon (a,b)^2 \times_\gamma U \rightarrow U$, such that the path $t
\mapsto A^\gamma _{t_0,t}(g)$ is $D$-horizontal, for any $t_0 \in (a,b)$ and $g \in P_{t_0}$. Indeed, $t \mapsto A^\gamma _{t_0,t}(g)$
is then the $D$-horizontal lift through $\alpha \colon G \rightarrow M$, of the path $t \mapsto \gamma(t)$, that passes through $g$ at time
$t_0$. The existence of the action follows, for example, from the observation that the connection $D$ on $\alpha \colon G \rightarrow M$
pulls back to a rank-one connection on the pullback $\gamma ^*G \rightarrow (a,b)$; f\/inding horizontal paths then amounts to integrating a vector f\/ield, to which the standard existence and uniqueness theory for f\/lows applies.

We will call the action $A^{\gamma}$ def\/ined above the {\it parallel action along $\gamma $} associated with the connection $D$. Note that $A_{t_1,t_2}^\gamma (m)=m$ because $D(m)=T_mM$ for all $m \in M$.

\subsection[Multiplication in $J^1 G$ in special cases]{Multiplication in $\boldsymbol{J^1 G}$ in special cases}\label{hotel}

The technical part of the proof of Theorem~\ref{theorem-mainman} is to derive product formulas in a few special cases. Referring to Fig.~\ref{fig1} below, we have:
\begin{Lemma}\label{lemma-hotel}
 Suppose $g \in G$ begins at $m \in M$ and ends at $m' \in M$. Then, for all $\mu,\nu \in J^1_gG$ and $\phi \in \model|_m$:
 \begin{enumerate}\itemsep=0pt
 \item[$(1)$] $\mu \varcheck\phi (v)=\mu(\phi^{TM} v)+ T L_g \cdot T I \cdot (\phi v)$, where $v \in T_mM$ is arbitrary;
 \item[$(2)$] $\nu \mu^{-1} = \varcheck \psi$, where
 \begin{gather*}
 \psi v=T R_{g^{-1}}\cdot \big(\mu\big(\Adjoint_\mu ^{-1}v\big)- \nu\big(\Adjoint_\mu^{-1} v\big)\big), \qquad v \in T_{m'}M;
 \end{gather*}
 \item[$(3)$] $\mu \varcheck\phi \mu^{-1} = (\Adjoint_\mu \phi)^\vee$.
 \end{enumerate}
 Also, if $\mu$, $\nu$ and $\phi$ above are related by $\nu= \mu\varcheck \phi $, then:
 \begin{enumerate}\itemsep=0pt
 \item[$(4)$] 
 $\mu(v)-\nu(v)=TR_g \cdot \Adjoint_\mu (\phi v)$, $v \in T_mM$.
 \end{enumerate}
Here $T I, T L_g, T R_{g^{-1}} \colon TG \rightarrow TG $ denote the tangent maps for inversion, left-multiplication by~$g$, and right-multiplication by~$g^{-1}$. The map $\phi \mapsto \phi^\vee $ was defined in Theorem~{\rm \ref{theorem-mainman}(1)}.
\end{Lemma}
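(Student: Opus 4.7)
The plan is to reduce everything to one product rule for the derivative of groupoid multiplication, specialized to the case where one of the two factors passes through an identity arrow. Concretely, if $k(t), h(t) \in G$ are composable paths ($\alpha(k(t)) = \beta(h(t))$) with $k(0)=g$ and $h(0)=m$ an identity, I would first verify, by checking the two separate variations (one factor held constant), that
\begin{gather*}
 \frac{d}{dt}\bigg|_0 k(t)h(t) = \dot k(0) + TL_g \cdot \big( \dot h(0) - T\iota_M T\beta \cdot \dot h(0)\big),
\end{gather*}
and, with the roles reversed (so $k(0)=m'$ is the identity and $h(0)=g$),
\begin{gather*}
 \frac{d}{dt}\bigg|_0 k(t)h(t) = \dot h(0) + TR_g \cdot \big( \dot k(0) - T\iota_M T\alpha \cdot \dot k(0)\big).
\end{gather*}
Both formulas need no auxiliary connection: the canonical decomposition $T_mG = T_mM \oplus \mathfrak g|_m$ available at identity arrows suffices, and the source/target-connections of Section~\ref{source} are only needed later.

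For (1), I would take bisections $b_1, b_2$ representing $\mu $ and $\varcheck\phi$, with $b_1(m)=g$ and $b_2(m)=m$, then set $h(t)=b_2(x(t))$ and $k(t)=b_1(\beta(h(t)))$ for a path $x(t)$ with $\dot x(0)=v$, and apply the first product rule. The identities $T\beta \cdot \varcheck\phi(v) = v - \#\phi v = \phi^{TM}v$ and $TI \cdot X = \# X - X$ for $X \in \mathfrak g$ reduce the correction term $\dot h(0) - T\iota_M T\beta \cdot \dot h(0)$ to $TI(\phi v)$, yielding the claimed formula. Statement~(4) then falls out algebraically: for $\nu = \mu\varcheck\phi$, formula~(1) gives $\mu(v) - \nu(v) = \mu(\#\phi v) - TL_g TI(\phi v)$, and the explicit form of $\Adjoint_\mu$ recorded in Section~\ref{adj} recognizes this combination as $TR_g \cdot \Adjoint_\mu(\phi v)$.

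Rather than computing $\nu\mu^{-1}$ head-on (which would demand a product rule with both factors away from the identity), I would prove~(2) by verifying the equivalent identity $\varcheck\psi \cdot \mu = \nu$. Applying the second (mirror) product rule to $\varcheck\psi$ at $m'$ and $\mu$ at $g$ produces $\nu(v) = \mu(v) - TR_g \cdot \psi(\Adjoint_\mu v)$; after the substitution $w = \Adjoint_\mu v$, this is exactly the stated formula for $\psi$, while the invertibility of $\psi^{TM}$ needed for $\psi \in \model|_{m'}$ is automatic from $\psi^{TM} = T\beta \circ \varcheck\psi = T\beta \circ (\nu\mu^{-1})$. Finally, (3) is a direct combination of~(4) and~(2): with $\nu = \mu\varcheck\phi$, formula~(4) rewrites the bracket inside $\psi(w)$ as $TR_g \cdot (\Adjoint_\mu \phi)(w)$ via the induced $\Adjoint$-action on $T^*\!M \otimes \mathfrak g$ from Section~\ref{adj}, giving $\psi = \Adjoint_\mu \phi$. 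The one real obstacle in the whole scheme is isolating the identity-factor case of the product rule cleanly; once~(1) and~(2) are in hand, (3) and~(4) are pure bookkeeping with the adjoint-representation formula and the identity $TI \cdot X = \#X - X$.
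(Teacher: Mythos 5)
Your formulas and conclusions are all correct, but you reach them by a genuinely different route at the one technical point where the paper has to work. The paper's proof of~(1) (and, implicitly, of~(2), whose details it omits as ``very similar'') introduces an auxiliary source connection and its parallel action $A^\gamma$ precisely in order to replace $g(t)h(t)$ by $f(t,t)$ for a two-variable map $f(s,t)$ that \emph{is} defined off the diagonal, and only then applies the chain rule; the same device is reused in Section~\ref{gg}. You instead exploit the fact that one factor sits at an identity arrow and invoke a connection-free product rule. That rule is correct, but your stated justification --- ``checking the two separate variations (one factor held constant)'' --- is exactly the move the paper warns is ill-posed, since $k(0)h(t)$ need not be composable for $t\neq 0$. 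The clean argument is linearity of the tangent map of groupoid multiplication on the space of composable tangent pairs: split $(\dot k(0),\dot h(0))$ into the composable pair $(\dot k(0),\,T\iota_M T\beta\cdot\dot h(0))$, realized by the path $k(t)\cdot\iota_M(\alpha(k(t)))=k(t)$, plus the composable pair $(0_g,\,\dot h(0)-T\iota_M T\beta\cdot\dot h(0))$, realized by a path $g\cdot h'(t)$ with $h'(t)$ confined to the $\beta$-fibre through $m$ (so the splitting relevant to your first formula is $T_mG=T_mM\oplus\kernel T_m\beta$, and $T_mM\oplus{\mathfrak g}|_m$ only for the mirror formula). With that made precise, your derivation of~(1) reproduces the paper's identity verbatim; your proof of~(2) by verifying $\varcheck\psi\,\mu=\nu$ via the mirror rule is a genuine shortcut over computing $\nu\mu^{-1}$ head-on; and your logical order, deducing~(4) directly from~(1) and then~(3) from~(2) and~(4), inverts the paper's, which gets~(3) by substituting~(1) into~(2) and then calls~(4) a consequence of~(2) and~(3) --- both orders are sound. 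What your approach buys is economy (no auxiliary connection, no parallel action); what the paper's buys is a uniform method for differentiating products with neither factor at an identity, which it needs again later.
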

\begin{figure}[h]
 \centering
 \includegraphics[scale=0.4]{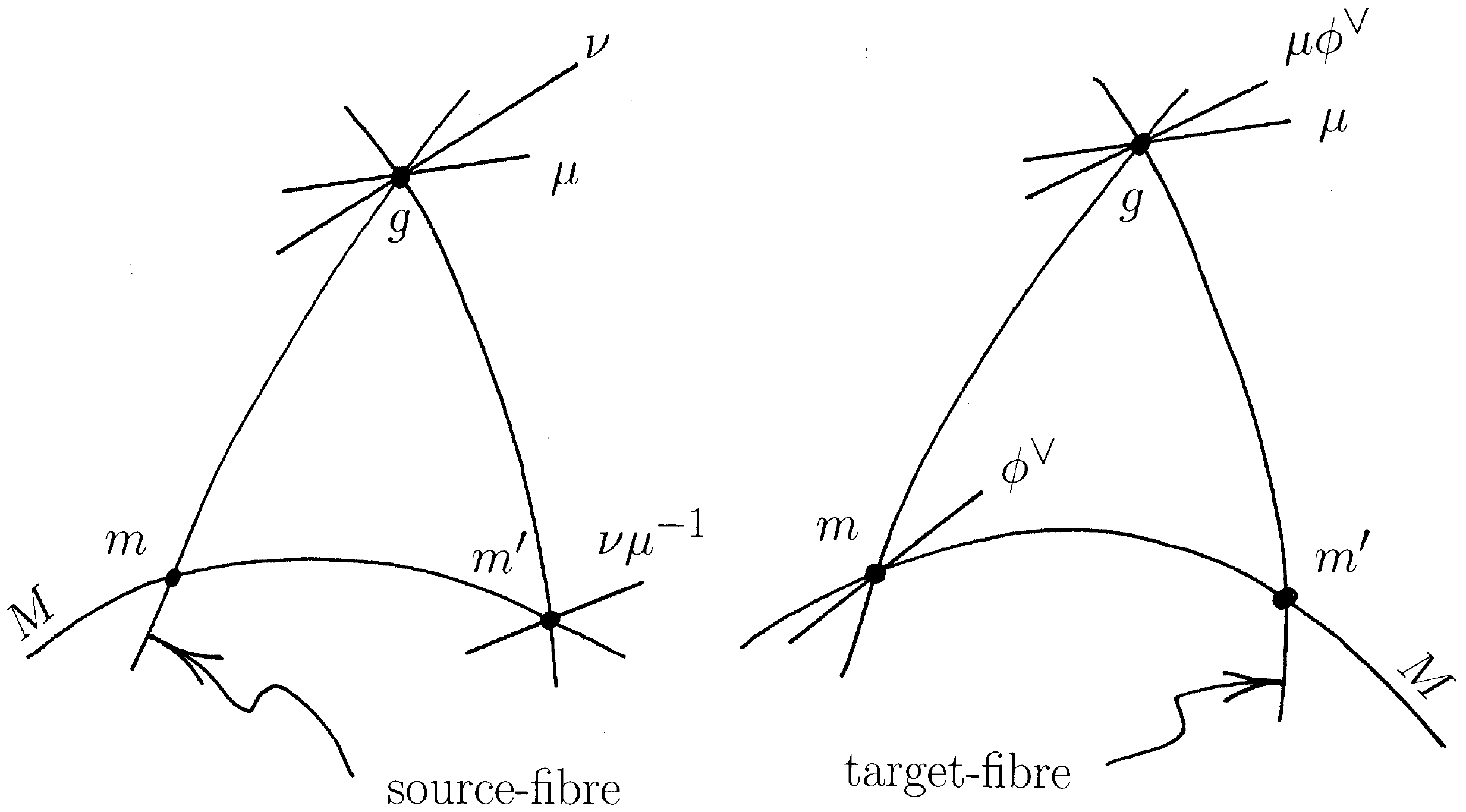}
 \caption{Schematic showing the f\/irst two products considered in Lemma~\ref{lemma-hotel}. In the lemma we view elements $\mu,\nu,\varcheck \phi \in J^1G$ as linear maps but in the f\/igure they are represented by their images (straight-line segments).}\label{fig1}
\end{figure}

\begin{proof}
We f\/irst prove (1). Let ${b}_1$ and ${b}_2$ be local bisections of $G$ (viewed as local right inverses of the source-projection $G \rightarrow M$) such that
 \begin{gather} \label{eq1}
 \mu = T_m {b}_1, \qquad \varcheck \phi= T_m {b}_2.
 \end{gather}
Let $v \in T_{m}M $ be arbitrary and $\gamma' \colon (-\epsilon,\epsilon)\rightarrow M$ be a simple regular path with $\gamma'(0)=m'$ and $\dot\gamma'(0)=v$, where a dot denotes derivative. According to the def\/inition of multiplication in $J^1G$, we have
 \begin{gather} \label{eq2}
 \mu \varcheck\phi (v)=\frac{d}{dt} g(t)h(t)\Big|_{t=0},
 \end{gather}
where
 \begin{gather} \label{eq3}
 g(t) := {b}_1(\gamma(t)),\qquad
 h(t) := {b}_2(\gamma'(t)),\qquad 
 \gamma(t) := \beta(h(t))=\beta({b}_2(\gamma'(t))).
 \end{gather}
 Here $\beta \colon G \rightarrow M$ denotes the target projection. We note in passing that
 \begin{gather}
 \dot \gamma (0)=\phi^{TM} v.\label{add}
 \end{gather}

Unfortunately, we cannot apply the chain rule directly to~\eqref{eq2} because $g(s)h(t)$ need not be def\/ined unless $s=t$. To overcome this dif\/f\/iculty, equip $G$ with an source connection $D$, as described in Section~\ref{source} above, and let
\begin{gather*}
(t_1,t_2,g) \mapsto A^\gamma_{t_1,t_2}(g) \colon \ (-\epsilon, \epsilon)^2 \times_\gamma U \rightarrow U
 \end{gather*}
denote the associated parallel action along $\gamma$. Here $\epsilon>0$, and we may arrange $U \subset G$ to be an open set containing both elements $m,g \in G$. We may now write
 \begin{gather*}
 g(t)h(t) = A^{\gamma}_{0,t} (\tilde g(t))\big(A^{\gamma}_{0,t}\big(\tilde h(t)^{-1}\big)\big)^{-1}
 \end{gather*}
 where
 \begin{gather}
 \label{a}
 \tilde g(t) := A^{\gamma}_{t,0}(g(t)),\\
 \label{b} 
 \tilde h(t) := \big(A^{\gamma}_{t,0}\big(h(t)^{-1}\big)\big)^{-1}.
 \end{gather}
 Noting that $g(t)h(t)=f(t,t)$, where
 \begin{gather*}
 f(s,t)= A^{\gamma}_{0,s} (\tilde g(t))\big(A^{\gamma}_{0,s}\big(\tilde h(t)^{-1}\big)\big)^{-1}
 \end{gather*}
 is well-def\/ined for all $(s,t) \in {\mathbb R}^2$ suf\/f\/iciently close to the origin, we may now apply the chain rule to \eqref{eq2}:
 \begin{align}
 \varcheck\phi \mu (v) &= \frac{{d} }{{d} s} f(s,0)\Big|_{s=0} + \frac{{d} }{{d} t} f(0,t)\Big|_{t=0}\nonumber\\
 &=\frac{{d} }{{d} s} A^{\gamma}_{0,s} (\tilde g(0))\big( A^{\gamma}_{0,s}\big(\tilde
 h(0)^{-1}\big) \big)^{-1} \Big|_{s=0} +\frac{{d} }{{d} t} A^{\gamma}_{0,0} (\tilde g(t))\big( A^{\gamma}_{0,0}\big(\tilde
 h(t)^{-1}\big) \big)^{-1} \Big|_{t=0}\nonumber\\
 &= w + \frac{{d} }{{d} t} \tilde g(t) \tilde h(t)\Big|_{t=0},\label{two}
 \end{align}
 where $w$ is the $D$-horizontal lift of $\dot \gamma(0)=\phi^{TM}v$ to an element of $D(g)\subset T_gG$.

Noting that $\tilde g(s) \tilde h(t)$ is def\/ined for all $(s,t) \in {\mathbb R}^2$ suf\/f\/iciently close to the origin, we may apply the chain rule directly to obtain
 \begin{gather} \label{three}
 \frac{{d} }{{d} t} \tilde g(t) \tilde h(t) \Big|_{t=0} = \dot{\tilde g}(0) + T L_g \dot{\tilde h}(0).
 \end{gather}
 It remains to compute $\dot{\tilde g}(0)$ and $\dot{\tilde h}(0)$.
 From~\eqref{a} we have
 \begin{gather*}
 g(t)= A^{\gamma}_{0,t}(\tilde g(t))=A^{\gamma}_{0,s}(\tilde g(t))\Big|_{s=t},
 \end{gather*}
 so that the chain rule gives
 \begin{gather*}
 \dot g(0)=\frac{{d} }{{d} s} A^{\gamma}_{0,s}(\tilde g(0))\Big|_{s=0} + \frac{{d} }{{d} t} A^{\gamma}_{0,0}(\tilde g(t))\Big|_{t=0}= w + \dot{\tilde g}(0).
 \end{gather*}
 Therefore
 \begin{gather*}
 \dot{\tilde g}(0) = \dot g(0) - w.
 \end{gather*}
 Similar arguments applied to \eqref{b} give us
 \begin{gather*}
 \dot{\tilde h}(0) = \dot h(0) - \phi^{TM}v.
 \end{gather*}
 These last two results and \eqref{three} imply
 \begin{gather*}
 \frac{{d} }{{d} t} \tilde g(t) \tilde h(t) \Big|_{t=0}= \dot g(0) - w + T L_g \cdot \big(\dot h(0) - \phi^{TM}v\big).
 \end{gather*}
 Equation \eqref{two} now reads
 \begin{gather*}
 \varcheck\phi \mu (v)=\dot g(0)+ T L_g \cdot \big(\dot h(0) - \phi^{TM} v\big).
 \end{gather*}
Finally, appealing to \eqref{eq3}, \eqref{add} and \eqref{eq1}, we obtain
 \begin{gather*}
 \varcheck\phi \mu (v) = \mu \big(\phi^{TM} v\big) + T L_g \cdot \big(\varcheck\phi(v) - \phi^{TM} v\big).
 \end{gather*}
 Noting the identity $\#X - X = T I \cdot X$, $X \in {\mathfrak g}$, one obtains{\samepage
 \begin{gather*}
 T L_g \cdot \big(\varcheck \phi (v) - \phi^{TM} v\big) =T L_g \cdot (\#\phi v - \phi v)=T L_g \cdot T I \cdot (\phi v).
 \end{gather*}
 Formula (1) now follows.}

The strategy for proving (2) is very similar and is omitted. Regarding (3), we have, putting $\nu = \mu \varcheck \phi $ in (2), $\mu \varcheck \phi \mu^{-1} =\varcheck \psi$, where
 \begin{gather*}
 \psi v = T R_{g^{-1}} \cdot \big(\mu(w)-(\mu\varcheck \phi)(w)\big),
 \end{gather*}
 and $w = \Adjoint_\mu^{-1} v$. Substituting~(1):
 \begin{align*}
 \psi v&=T R_{g^{-1}} \cdot \big(\mu(w) - \mu(\phi^{TM} w) - T L_g \cdot T I \cdot (\phi w)\big)\\
 &=T R_{g^{-1}} \cdot (\mu(\# \phi w) - T L_g \cdot T I \cdot (\phi w))\\
 &= \Adjoint_\mu(\phi w)=\Adjoint_\mu\big(\phi \Adjoint_\mu^{-1}v\big) = (\Adjoint_\mu \phi) v.
 \end{align*}

 Finally, we note that (4) is a straightforward consequence of (3) and (2).
\end{proof}

\subsection{Proof of Theorem \ref{theorem-mainman}}
That the map $\phi \mapsto \phi^\vee$ in Theorem~\ref{theorem-mainman}(1) is an embedding, with image the kernel of $J^1 G \rightarrow G$, is readily checked, as are the equivariance claims. To prove $\psi^\vee \phi^\vee = (\psi \phi)^\vee$, take $g=m$ and $\mu = \psi^\vee$ in Lemma~\ref{lemma-hotel}(1) and apply the identity $TI \cdot X = \#X -X$, for $X \in {\mathfrak g} $. Note that multiplication in $\model$ was def\/ined in~\eqref{clost}.

To prove Theorem~\ref{theorem-mainman}(2), let $b_1$, $b_2$ be bisections of $G$ and $\Phi_1$, $\Phi_2$ sections of $\model$. Let $m \in M$ be given and def\/ine $m'=\beta(b_2(m))$ and $m''= \beta(b_1(m'))$. We compute
\begin{align*}
 a \big( (b_1,\Phi_1)(b_2,\Phi_2) \big)(m) & =a \big( b_1b_2, \Phi_1~ b_1 \cdot \Phi_2 \big)(m)=\big((\Phi_1~b_1 \cdot \Phi_2)(m'') \big)^\vee T_m(b_1 b_2)\\
 &=\big( \Phi_1(m'') \Adjoint_{T_{m'}b_1}\Phi_2(m') \big)^\vee T_m(b_1b_2)\\
 &\overset{\text{by Theorem~\ref{theorem-mainman}(1)}}{=}\big( \Phi_1(m'') \big)^\vee \big( \Adjoint_{T_{m'}b_1}\Phi_2(m') \big)^\vee T_m(b_1b_2)\\
 &\overset{\text{by Lemma~\ref{lemma-hotel}(3)}}{=}\big( \Phi_1(m'') \big)^\vee T_{m'}b_1 \big( \Phi_2(m') \big)^\vee (T_{m'}b_1)^{-1} T_m(b_1b_2)\\
 &= \big( \Phi_1(m'') \big)^\vee T_{m'}b_1 \big( \Phi_2(m') \big)^\vee T_mb_2 = a (b_1,\Phi_1) a(b_2,\Phi_2).
\end{align*}

Regarding, Theorem~\ref{theorem-mainman}(3), the reader will be readily convinced that the map
\begin{gather*} c \colon \ G \times_\beta \model \rightarrow J^1 G\end{gather*}
def\/ined by $c(g,\phi)= \phi^\vee S(g)$ is a dif\/feomorphism. To show that it is a morphism of groupoids, let $(g_1,\phi_1), (g_2,\phi_2) \in G \times_\beta \model$ be given. Then
\begin{align*}
 c(g_1,\phi_1) c(g_2,\phi_2)&=\phi_1^\vee S(g_1)\phi_2^\vee S(g_2)=\phi_1^\vee S(g_1)\phi_2^\vee S(g_1)^{-1}S(g_1 g_2)\\
 &\overset{\text{by Lemma~\ref{lemma-hotel}(3)}}{=}\phi_1^\vee (\Adjoint_{S(g_1)}\phi_2)^\vee S(g_1 g_2) \\
 &\overset{\text{by Theorem \ref{theorem-mainman}(1)}}{=}(\phi_1\Adjoint_{S(g_1)}\phi_2)^\vee S(g_1 g_2)\\
 &= c(g_1 g_2, \phi_1\Adjoint_{S(g_1)}\phi_2)=c \big( (g_1,\phi_1)(g_2,\phi_2) \big).
\end{align*}

\section{On the inf\/initesimalization of Cartan connections}\label{gg}
Let $D \subset TG$ be a Cartan connection on a Lie groupoid $G$ over $M$. Then, in the terminology of Section~\ref{source}, $D$ is also a~globally def\/ined source-connection on $G$. We dif\/ferentiate the corresponding parallel action of $D$, along curves in~$M$, to obtain a~linear connection $\nabla^D$ on the Lie algebroid~$\mathfrak g$ of $G$. Theorem~\ref{theorem-aswedid} below states that this `geometrically' def\/ined connection $\nabla^D$ coincides with the inf\/initesimal Cartan connection~$\nabla $, def\/ined `algebraically' in Section~\ref{hsd}.

\subsection{Dif\/ferentiating the parallel action def\/ined by a source connection}\label{aswedid}
Let $D$ be an arbitrary source connection on $G$. Let $v \in T_mM$ be an arbitrary tangent vector at some point $m \in M$, and $\gamma \colon (-\epsilon,\epsilon)\rightarrow M$ a simple regular path with $\gamma(0)=m$ and $\dot\gamma(0)=v$, where a dot denotes derivative. Shrinking $\epsilon $ if necessary, we have, for some open neighborhood~$U$ of~$m$ in~$\gamma^*G$, the corresponding parallel action $A^{\gamma}\colon (-\epsilon ,\epsilon)^2\times_\gamma U \rightarrow U$ def\/ined in Section~\ref{source}~-- an action of the pair groupoid $(-\epsilon,\epsilon)^2$ on $U$. We obtain an action $a^\gamma$ of $(-\epsilon,\epsilon)^2$ on the pullback~$\gamma^* {\mathfrak g}$~-- which, for simplicity, we identify with a submanifold of
${\mathfrak g}$~-- by dif\/ferentiating:
\begin{gather*}
 a^\gamma_{t_1,t_2}\left( \frac{d}{ds}g(s)\Big|_{s=0} \right):= \frac{d}{ds}A^\gamma_{t_1,t_2}g(s)\Big|_{s=0}.
\end{gather*}
Here we view an element of $\gamma^* {\mathfrak g}|_m\cong {\mathfrak g}|_m$ as the derivative of some path $s \mapsto g(s) \in G$ lying in a~source-f\/ibre of $G$. A linear connection $\nabla^D$ on ${\mathfrak g} $ is now def\/ined by
\begin{gather*}
 \nabla^D_vX := \partial_t a^\gamma_{t,0}X(\gamma(t))|_{t=0}, \qquad X \in \Gamma({\mathfrak g}).
\end{gather*}
\begin{Notation}
Here and in the sequel $\partial_t$ indicates a derivative with respect to~$t$ of a vector-valued function of $t$, identif\/ied with an element of the underlying vector space and not a tangent vector (i.e., with base-point `forgotten').
\end{Notation}

Here is the main result of the present section:
\begin{Theorem}\label{theorem-aswedid}
If $D \subset TG$ is a Cartan connection on $G$ then $\nabla^D$ coincides with the correspon\-ding infinitesimal Cartan connection~$\nabla$ defined in Section~{\rm \ref{hsd}}.
\end{Theorem}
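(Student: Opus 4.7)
The plan is to derive an explicit formula for $\nabla_{v}X$ from the splitting $sX=J^{1}X+\nabla X$ using the multiplicative structure of $J^{1}G$ and Lemma~\ref{lemma-hotel}, and then to match it with the formula for $\nabla^{D}_{v}X$ obtained from the horizontal-lift characterization of the parallel action. Throughout, fix $X\in\Gamma(\mathfrak{g})$, $m\in M$, and $v\in T_{m}M$; it will suffice to show $\nabla_{v}X=\nabla^{D}_{v}X$ in $\mathfrak{g}|_{m}$.

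On the algebraic side, the identity $\iota(\nabla X|_{m})=s(X(m))-J^{1}_{m}X$ translates under $\theta^{-1}$ into an equation among tangent vectors to $J^{1}G$ at the identity $T_{m}\iota_{M}$, represented by the paths $p(t):=S(\Phi_{X^{\mathrm{R}}}^{t}(m))$ and $q(t):=T_{m}(\Phi_{X^{\mathrm{R}}}^{t}\circ\iota_{M})$. Both paths project under $J^{1}G\to G$ to the common curve $g(t):=\Phi_{X^{\mathrm{R}}}^{t}(m)$, so the quotient $\eta(t):=q(t)^{-1}p(t)$ stays in the isotropy group $J^{1}_{m}G$, identified with $\model|_{m}$ via Theorem~\ref{theorem-mainman}(1). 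Writing $\eta(t)=\varcheck{\tilde{\phi}(t)}$ with $\tilde{\phi}(0)=0$ and using the standard identity $\partial_{t}(q^{-1}p)|_{t=0}=\dot{p}(0)-\dot{q}(0)$ at the identity of a Lie group, we obtain $\partial_{t}\tilde{\phi}(t)|_{t=0}=\nabla X|_{m}\in T_{m}^{*}M\otimes\mathfrak{g}|_{m}$. Applying Lemma~\ref{lemma-hotel}(4) to the tautological relation $p(t)=q(t)\varcheck{\tilde{\phi}(t)}$, and differentiating at $t=0$---where the derivative of the $\Adjoint^{-1}$-factor contributes only at second order, since $\tilde{\phi}(0)=0$ and $\Adjoint_{q(0)}$ is the identity on $\mathfrak{g}|_{m}$---yields
\[
\nabla_{v}X=\partial_{t}\bigl[TR_{g(t)^{-1}}\bigl(T_{m}\Phi_{X^{\mathrm{R}}}^{t}(v)-S(g(t))(v)\bigr)\bigr]\big|_{t=0}\in\mathfrak{g}|_{m}.
\]

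On the geometric side, pick a curve $\gamma'$ in $M$ with $\dot{\gamma}'(0)=v$ and introduce the two-parameter family $F(s,t):=A^{\gamma'}_{t,0}(\Phi_{X^{\mathrm{R}}}^{s}(\gamma'(t)))\in\alpha^{-1}(m)$. By the definition of the differentiated parallel action, $\nabla^{D}_{v}X=\partial_{t}\partial_{s}F(s,t)|_{(0,0)}$; since $F(0,t)=m$ (hence $\partial_{t}F(0,0)=0$), this mixed partial is canonically an element of $T_{m}\alpha^{-1}(m)=\mathfrak{g}|_{m}$ and the order of differentiation may be exchanged. Using the inversion $\Phi_{X^{\mathrm{R}}}^{s}(\gamma'(t))=A^{\gamma'}_{0,t}(F(s,t))$ together with the horizontal-lift ODE $\dot{\tilde{g}}(u)=S(\tilde{g}(u))(\dot{\gamma}(u))$ from Section~\ref{source}, one obtains $\partial_{t}F(s,0)=T\Phi_{X^{\mathrm{R}}}^{s}(v)-S(\Phi_{X^{\mathrm{R}}}^{s}(m))(v)$, and therefore
\[
\nabla^{D}_{v}X=\partial_{s}\bigl[T\Phi_{X^{\mathrm{R}}}^{s}(v)-S(\Phi_{X^{\mathrm{R}}}^{s}(m))(v)\bigr]\big|_{s=0}\in\mathfrak{g}|_{m}.
\]

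The two derived expressions differ only by application of the bundle isomorphism $TR_{g(s)^{-1}}$; since $g(0)=m$ and $TR_{m^{-1}}$ restricts to the identity on $\mathfrak{g}|_{m}$, and both paths start at the zero element of their respective bundles, their intrinsic vertical derivatives at $s=0$ coincide, yielding $\nabla_{v}X=\nabla^{D}_{v}X$. The hard part will be the geometric step: deriving the formula for $\partial_{t}F(s,0)$ via the inversion relation and the horizontal-lift ODE, and justifying the Clairaut-type equality of mixed partials for the manifold-valued $F$---legitimate here precisely because $\partial_{t}F(0,0)$ vanishes.
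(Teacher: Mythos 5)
Your proposal is correct and follows essentially the same route as the paper: you reduce both connections to the common formula $\partial_t\bigl[T_m\Phi^t_{X^{\mathrm R}}(v)-S(\Phi^t_{X^{\mathrm R}}(m))(v)\bigr]\big|_{t=0}$, handling the algebraic side via Lemma~\ref{lemma-hotel}(4) applied to the two paths over $g(t)=\Phi^t_{X^{\mathrm R}}(m)$ (the paper packages this step as Lemma~\ref{lemma-later}) and the geometric side via the mixed-partials argument with the inverted parallel action (the paper's Lemma~\ref{lemma-onthe}). The only cosmetic difference is that the paper pairs everything with test functions $df$ to make the bundle-valued derivatives scalar, where you work with intrinsic vertical derivatives of paths through zero vectors.
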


The proof of this theorem is given in Section~\ref{later} after necessary preparations.

\subsection[The Lie algebroid of $\model$]{The Lie algebroid of $\boldsymbol{\model}$}\label{onthe}
Implicit in the discussion at the beginning of Section~\ref{sec3} is an identif\/ication of the Lie algebroid of our model $\model$ of the kernel of $J^1 G \rightarrow G$ with the kernel $T^*\!M \otimes {\mathfrak g} $ of $J^1 {\mathfrak g} \rightarrow {\mathfrak g}$. To prove the preceding theorem we must make this identif\/ication explicit. To this end, let ${\mathcal L}(G)$ denote the (abstract) Lie algebroid of any Lie groupoid $G$. Then an element of ${\mathcal L}(\model)$ is of the form $\frac{d}{dt}\phi_t|_{t=0}$, for some path $t \mapsto \phi_t \in \model$ lying completely in some source-f\/ibre (=~target-f\/ibre) of $\model$; the latter is simply an open neighborhood of zero in a vector space
\begin{gather*} (T^*\!M \otimes {\mathfrak g})|_m,\qquad m \in M.\end{gather*}
Of course, we also require that $\phi_0$ be an identity element of $\model$, which just means that $\phi_0$ is the zero element of $(T^*\!M \otimes {\mathfrak g})|_m$. There is a natural isomorphism of vector bundles $\theta' \colon {\mathcal L}(\model) \rightarrow T^*\!M \otimes {\mathfrak g}$ def\/ined by
\begin{gather*}
 \theta'\left( \frac{d}{dt}\phi_t\Big|_{t=0} \right) v =\partial_t \phi_t v|_{t=0}. 
\end{gather*}
This isomorphism is the appropriate one for present purposes, by virtue of the following:
\begin{Proposition}\label{proposition-onthe}
 Defining $\theta'$ as above, we have the following commutative diagram with exact rows:
 \begin{gather*}
 \begin{CD}
 0 @>>> {\mathcal L}(\model) @>j'>> {\mathcal L}(J^1 G) @>>> {\mathcal L}(G) @>>> 0\\
 @. @V\theta' VV @V\theta VV @| @.\\
 0 @>>> T^*\!M \otimes {\mathfrak g} @>j>> J^1 {\mathfrak g}
 @>>>{\mathfrak g} @>>> 0.
 \end{CD}%
 \end{gather*}
Here $\theta$ denotes the isomorphism described in Section~{\rm \ref{repsagain}}; $j'$ denotes the derivative of the map $\phi \mapsto \phi^\vee \colon \model \rightarrow J^1 G$ defined in Theorem~{\rm \ref{theorem-mainman}(1)}; $j$ denotes the map which induces a~corresponding map of section spaces given by $df \otimes X \mapsto f J^1 X - J^1 (fX)$.
\end{Proposition}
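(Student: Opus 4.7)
The plan is to establish the three constituent claims of the proposition — exactness of the rows, and commutativity of the right and left squares — treating the left square as the main task.

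The bottom row is the standard one-jet short exact sequence for the Lie algebroid ${\mathfrak g}$, hence exact; and applying the Lie algebroid functor ${\mathcal L}$ to the short exact sequence of Lie groupoid morphisms $\model \monomorphism J^1 G \rightarrow G$ furnished by Theorem~\ref{theorem-mainman}(1) yields the top row, with $j'$ by construction the derivative of $\phi \mapsto \phi^\vee$, and exactness then immediate. The map $\theta'$ is a well-defined bundle isomorphism because the source-fibre of $\model$ at $m$ is an open neighborhood of $0$ in the vector space $(T^*\!M \otimes {\mathfrak g})|_m$, whose tangent space at $0$ is canonically that space itself. Commutativity of the right square follows from~\eqref{sop}: under $\theta$, the derivative of the projection $J^1 G \rightarrow G$ transports to the canonical projection $J^1 {\mathfrak g} \rightarrow {\mathfrak g}$.

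The substantive task is verifying the left square. Given $\alpha \in {\mathcal L}(\model)|_m$, by bilinearity we may take $\alpha = \frac{d}{dt}(t\phi)|_{t=0}$ with $\phi = df|_m \otimes X|_m$, where $X \in \Gamma({\mathfrak g})$ and $f \in C^\infty(M)$ is chosen so that $f(m) = 0$. Then $\theta'(\alpha) = \phi$, and the formula defining $j$ on sections gives
\begin{gather*}
 j(\theta'(\alpha)) = f(m)\, J^1_m X - J^1_m(fX) = -J^1_m(fX).
\end{gather*}
On the other hand, $j'(\alpha) \in {\mathcal L}(J^1 G)$ is the tangent vector at $T_m \iota_M$ represented by the path $t \mapsto (t\phi)^\vee$. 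By~\eqref{sop} it then suffices to show this path agrees to first order with $t \mapsto T_m(\Phi_{Y^R}^t \circ \iota_M)$ for $Y := -fX$.

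This first-order comparison is the crux. The hypothesis $f(m) = 0$ forces $Y(m) = 0$, so the base-point $\Phi_{Y^R}^t(m)$ in $G$ has zero velocity at $t = 0$, consistent with the constant base-point $m$ of $(t\phi)^\vee$. The remaining variation in the space of linear maps $T_mM \rightarrow T_mG$ is computed by expanding the flow of $Y^R$ to first order: since $Y^R$ vanishes at $m$, the derivative of its flow equals the Jacobian of $Y^R$ at $m$, which along $M$-directions evaluates on $v$ to $-df(v)\,X|_m$, matching the variation $v \mapsto -df(v)\,X|_m$ of $(t\phi)^\vee$. The main obstacle is this last calculation: though routine, it demands careful coordination of the sign conventions binding together~\eqref{sop}, the formula $df \otimes X \mapsto fJ^1 X - J^1(fX)$, and the definition $\phi^\vee v = v - \phi v$.
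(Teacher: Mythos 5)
Your argument is correct, but at the crux (commutativity of the left square) it takes a genuinely different route from the paper's. The paper also reduces to elements of the form $J^1_mY$ with $Y(m)=0$, but it then evaluates $\theta(j'(\zeta))$ by choosing an auxiliary source connection $D$, setting $\nabla=\nabla^D$, and invoking Lemma~\ref{lemma-onthe}; that is, it routes the whole computation through the parallel-translation machinery of Section~\ref{source}, which it needs anyway for Theorem~\ref{theorem-aswedid}. You instead compute $\frac{d}{dt}T_m\big(\Phi^t_{Y^{\mathrm R}}\circ\iota_M\big)\cdot v\big|_{t=0}$ directly as the linearization of $Y^{\mathrm R}$ at its zero $m$: with $Y=-fX$ one has $Y^{\mathrm R}=-(f\circ\beta)X^{\mathrm R}$, and since $f(m)=0$ and $\beta|_M=\identity$ the linearization sends $v\in T_mM$ to $-df(v)\,X(m)=-\phi v$, matching $\frac{d}{dt}(t\phi)^\vee v\big|_{t=0}$. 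This is more elementary and self-contained (no source connection is needed for this proposition), at the cost of re-deriving by hand the flow-linearization identity that the paper packages inside Lemma~\ref{lemma-onthe}; your sign bookkeeping does check out, and your remaining reductions (decomposable $\phi=df|_m\otimes X|_m$ with $f(m)=0$; testing first-order agreement of the two paths inside the fibre of $J^1G\rightarrow G$ over $m$, which sits as an open set in an affine subspace of the linear maps $T_mM\rightarrow T_mG$) are legitimate and essentially parallel the paper's own choices. Either proof is acceptable; the paper's has the advantage of reusing a lemma needed later, yours the advantage of independence from Section~\ref{source}.
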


Key in the proof of the proposition is the following technical result applied again in Section~\ref{later} below:
\begin{Lemma}\label{lemma-onthe}
Let $\alpha \colon G \rightarrow M$ denote the source map of $G$ and let $D $ be any source-connection. For each $g \in G$ beginning at some $m=\alpha(g)$, let $S(g) \colon T_{m}M \rightarrow D(g)$ denote the inverse of the restriction $T_g \alpha \colon D(g) \rightarrow T_mM$ of the tangent map $T \alpha $. $($In other words, $S(g)(v)$ is the $D$-horizontal lift of $v \in T_{m}M$ to a vector at $g$; if $D$ is a Cartan connection, we view $S(g) \in J^1 G$ and $S\colon G \rightarrow J^1 G$ has the meaning in Section~{\rm \ref{onepoint}.)} Then, for any function $f$ on $G$ defined in a~neighborhood of~$m$, one has
\begin{gather*}
 \big\langle df, \nabla^D_vX\big\rangle= \partial_t \big\langle df, T_m\big( \Phi_{X^\mathrm{R}}^t \circ
 \iota_M \big)\cdot v \big\rangle\big|_{t=0} - \partial_t \big\langle df, S\big( \Phi_{X^\mathrm{R}}^t(m) \big)(v) \big\rangle\big|_{t=0}.
 \end{gather*}
\end{Lemma}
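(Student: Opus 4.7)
The plan is to identify $\nabla^D_v X$ with the derivative of a difference that vanishes at the base point, so that the derivative lives canonically in $\mathfrak{g}|_m$ and splits term-by-term under the pairing with $df$. I fix a simple regular curve $\gamma\colon (-\epsilon,\epsilon)\to M$ with $\gamma(0)=m$ and $\dot\gamma(0)=v$, introduce the forward-transported section $Y(r):=a^\gamma_{0,r}(X(m))\in \mathfrak{g}|_{\gamma(r)}$ together with the comparison $Z(r):=X(\gamma(r))-Y(r)$, and note $Z(0)=0$.

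Since $a^\gamma_{r,0}$ inverts $a^\gamma_{0,r}$, the value $a^\gamma_{r,0}(Y(r))=X(m)$ is constant in $r$, so that
\begin{gather*}
\nabla^D_v X = \partial_r a^\gamma_{r,0}(X(\gamma(r)))\big|_{r=0} = \partial_r a^\gamma_{r,0}(Z(r))\big|_{r=0}.
\end{gather*}
Because $Z(0)=0$ and $a^\gamma_{0,0}$ is the identity on $\mathfrak{g}|_m$, a routine Taylor expansion shows that the $O(r)$ part of $a^\gamma_{r,0}$ contributes only at order $r^2$, and the right-hand side collapses to $\partial_r Z(r)|_{r=0}$, which is canonically an element of $\mathfrak{g}|_m$ because $Z(0)=0$. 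Pairing with $df$ and splitting $Z$ then yields
\begin{gather*}
\langle df,\nabla^D_v X\rangle = \partial_r\langle df, X(\gamma(r))\rangle\big|_{r=0} - \partial_r\langle df, Y(r)\rangle\big|_{r=0}.
\end{gather*}

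It remains to recognise each of the two terms as one of those on the right of the lemma. For the first, $X^{\mathrm{R}}|_M=X$ gives $X(\gamma(r)) = \partial_t\Phi^t_{X^{\mathrm{R}}}(\gamma(r))|_{t=0}$, hence $\langle df,X(\gamma(r))\rangle = \partial_t f(\Phi^t_{X^{\mathrm{R}}}(\gamma(r)))|_{t=0}$; Clairaut's theorem combined with $T_m(\Phi^t_{X^{\mathrm{R}}}\circ\iota_M)\cdot v = \partial_r\Phi^t_{X^{\mathrm{R}}}(\gamma(r))|_{r=0}$ produces the first term on the right of the lemma. For the second, writing $X(m)=\partial_t\Phi^t_{X^{\mathrm{R}}}(m)|_{t=0}$ gives $Y(r) = \partial_t A^\gamma_{0,r}(\Phi^t_{X^{\mathrm{R}}}(m))|_{t=0}$, and the same swap plus the defining identity $\partial_r A^\gamma_{0,r}(g)|_{r=0}=S(g)(v)$ of the parallel action delivers the second.

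The main obstacle is making precise the reduction $\partial_r a^\gamma_{r,0}(Z(r))|_{r=0}=\partial_r Z(r)|_{r=0}$. Without a background connection on $\mathfrak{g}$, the raw derivative $\partial_r X(\gamma(r))|_{r=0}$ has no intrinsic value along $\gamma$; subtracting $Y(r)$ is exactly the trick that enforces $Z(0)=0$ and allows the first-order behaviour of $a^\gamma_{r,0}$ to drop out. This cancellation is what legitimises splitting the pairing $\langle df,Z(r)\rangle$ and computing each piece separately via the flow of $X^{\mathrm{R}}$ and the horizontal lift $S$.
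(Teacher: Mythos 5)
Your argument is correct, and it arrives at the same two mixed second derivatives as the paper's proof; the difference lies in how the two time parameters are decoupled. The paper computes $\langle df,\nabla^D_vX\rangle=\partial_t\partial_s f\bigl(A^\gamma_{s,0}\Phi^t_{X^{\mathrm{R}}}(\gamma(s))\bigr)\big|_{s=0,t=0}$ after one swap of mixed partials, then writes $\Phi^t_{X^{\mathrm{R}}}(\gamma(s))=A^\gamma_{0,s}(g(s,t))$ and applies the chain rule to the two-parameter family $(s_1,s_2)\mapsto A^\gamma_{0,s_1}g(s_2,t)$ in order to split $\partial_s g(s,t)|_{s=0}$ into the two desired terms. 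You instead split at the algebroid level, subtracting the parallel transport $Y(r)=a^\gamma_{0,r}(X(m))$ so that $Z=X\circ\gamma-Y$ vanishes at $r=0$; the elementary identity $\partial_r(L_rZ(r))|_{r=0}=\partial_rZ(r)|_{r=0}$, valid for a smooth family of linear maps with $L_0=\mathrm{id}$ and $Z(0)=0$, then does the work of the paper's chain-rule step, and each of the two resulting scalar terms is handled by a single swap of mixed partials together with the defining property $\partial_rA^\gamma_{0,r}(g)|_{r=0}=S(g)(v)$ of the horizontal lift. What your route buys is that every derivative taken is of a scalar function of one variable at a time, and the only vector-valued derivative along the curve, $\partial_rZ(r)|_{r=0}$, is intrinsic precisely because $Z(0)=0$ (the fibre component of a velocity vector based at a zero-section point is canonical); you correctly identify this as the crux. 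The only points you use implicitly and should perhaps state are that $a^\gamma_{t_1,t_2}$ is fibrewise linear and jointly smooth in its parameters --- both immediate from its definition as the fibre derivative of the smooth action $A^\gamma$ --- and that $\Phi^t_{X^{\mathrm{R}}}(m)$ stays in the domain of the parallel action for small $t$, an issue the paper's own proof also handles only implicitly.
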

\begin{proof}
Adopting the notation of Section~\ref{aswedid} with $v=\dot \gamma (0)$, we calculate,
 \begin{align*}
 \big\langle df, \nabla^D_vX\big\rangle
 &=\partial_s\big\langle df, a^\gamma_{s,0}X(\gamma(s)) \big\rangle\big|_{s=0}
 =\partial_s \partial_t f\big( A^\gamma_{s,0}\Phi_{X^\mathrm{R}}^t(\gamma(s)) \big)\big|_{t=0,s=0} \\
 &=\partial_t \partial_s f(g(s,t))|_{t=0,s=0},
 \end{align*}
i.e.,
\begin{gather}
\big\langle df, \nabla^D_vX\big\rangle =\partial_t \left\langle df, \frac{\partial }{\partial
 s}g(s,t)\Big|_{s=0} \right\rangle\Big|_{t=0},\label{house}
 \end{gather}
where $g(s,t)=A^\gamma_{s,0}\Phi_{X^\mathrm{R}}^t(\gamma(s))$ is def\/ined for all $(s,t)$ suf\/f\/iciently close to $(0,0)$, and we have appealed to the equality of mixed partial derivatives. Now we can write $\Phi_{X^\mathrm{R}}^t (\gamma(s))=A^\gamma_{0,s}g(s,t)$ and notice that $A^\gamma_{0,s_1}g(s_2,t)$ is def\/ined for all $(s_1,s_2,t)$ suf\/f\/iciently close to $(0,0,0)$. It follows that we may dif\/ferentiate with respect to $s$ at $s=0$ and apply the chain rule:
 \begin{gather*}
 \frac{\partial }{\partial s}\Phi_{X^\mathrm{R}}^t (\gamma(s))\Big|_{s=0}=\frac{\partial }{\partial s}A^\gamma_{0,s}g(0,t)\Big|_{s=0}+
 \frac{\partial }{\partial s}A^\gamma_{0,0}g(s,t)\Big|_{s=0},
 \end{gather*}
i.e.,
\begin{gather*} T_m\big(\Phi_{X^\mathrm{R}}^t \circ \iota_M\big)\cdot v= S(\Phi_{X^\mathrm{R}}^t)(v) + \frac{\partial }{\partial s}g(s,t)\Big|_{s=0}.
 \end{gather*}
 We conclude that
 \begin{gather*}
 \frac{\partial }{\partial s}g(s,t)\Big|_{s=0}=T_m\big(\Phi_{X^\mathrm{R}}^t \circ \iota_M\big) \cdot v - S\big(\Phi_{X^\mathrm{R}}^t(m)\big)(v).
 \end{gather*}
 Substituting this into \eqref{house} gives the desired result.
\end{proof}

\begin{proof}[Proof of Proposition~\ref{proposition-onthe}] Exactness of the top row follows from Theorem~\ref{theorem-mainman}(1); exactness of the bottom row is immediate. It remains to show that for any $\zeta \in {\mathcal L} (\model)$, we have
 \begin{gather}
 \theta(j'(\zeta ))=j(\theta'(\zeta)).\label{coddleston}
 \end{gather}

 By exactness, and the fact that $\theta $ is an isomorphism, the element $j'(\zeta)$ is in the image of the composite morphism
 \begin{gather*}
 T^*\!M \otimes {\mathfrak g} \xrightarrow{j} J^1 {\mathfrak g} \xrightarrow{\theta^{-1}} {\mathcal L} \big(J^1 G\big).
 \end{gather*}
 But any element of $T^*\!M \otimes {\mathfrak g} $ is of the form $-\nabla X(m)$, where $\nabla $ is an {\em arbitrary} linear connection on~$\mathfrak g$, $m \in M$, and $X$ is a local section of ${\mathfrak g} $ with $X(m)=0$. Since in that case $j(-\nabla X (m))=J^1_mX$, it follows, from formula~\eqref{sop} for $\theta^{-1}$ that
 \begin{gather}
 j'(\zeta)=\frac{d}{dt}T_m\big(\Phi_{X^\mathrm{R}}^t\circ \iota_M\big)\Big|_{t=0},\label{occ}
 \end{gather}
for some $m \in M$ and $X \in \Gamma({\mathfrak g})$ with $X(m)=0$. Furthermore, we have, by construction, $\theta(j'(\zeta))=j(-\nabla X(m))$. Consequently, to prove~\eqref{coddleston}, it suf\/f\/ices, since $j$ is injective, to show that
 \begin{gather}
 \nabla X(m)=-\theta'(\zeta).\label{goop}
 \end{gather}

Now the map $j'$ is the derivative of the embedding $\phi \mapsto \phi^\vee \colon \model \rightarrow J^1 G$ def\/ined in Theorem~\ref{theorem-mainman}(1). In view of this fact and~\eqref{occ}, there is a path $t \mapsto \phi_t$ in $\model$ such that $\phi_t^\vee = T_m(\Phi_{X^\mathrm{R}}^t\circ \iota_M)$ and $\zeta=\frac{d}{dt}\phi_t|_{t=0}$. To prove~\eqref{goop} we will apply both sides to some tangent vector $v = \dot \gamma(0) \in T_mM$ and test the resulting elements of ${\mathfrak g}|_m$ on a test function~$f$, def\/ined on~$G$, in some neighborhood of $m$. It will be convenient to take $\nabla=\nabla^D$, where $D \subset TG$ is an arbitrary source connection on $G$. Then, applying the preceding lemma, and using the fact that $\Phi^t_{X^{\mathrm R}}(m)=m$ (since $X(m)=0$) we compute
\begin{align*}
 \langle df, \nabla_vX\rangle=\langle df, \nabla^D_vX\rangle &=\partial_t\big\langle df, T_m(\Phi^t_{X^{\mathrm R}}\circ
 \iota_M)\cdot v \big\rangle\big|_{t=0} - \partial_t\langle df, v\rangle |_{t=0}\\
 &=\partial_t\big\langle df, \phi_t^\vee(v) - v \big\rangle\big|_{t=0} =\partial_t\big\langle df, -\phi_t v \big\rangle\big|_{t=0}\\
 &=-\langle df, \partial_t(\phi_t v) \rangle|_{t=0} =-\left\langle df, \theta' \left( \frac{d }{dt}\phi_t\Big|_{t=0} \right)v \right\rangle\\
 & =-\langle df,\theta'(\zeta)v\rangle,
 \end{align*}
 which proves \eqref{goop}.
\end{proof}

\subsection{The proof of Theorem~\ref{theorem-aswedid}}\label{later}
\begin{Lemma}\label{lemma-later}
Let $g_t$ be a path in $G$ such that $g_0=m \in M$ and $g_t$ begins at $m$ for all $t$. Suppose $\mu_t$ and $\nu_t$ are two paths in $J^1G$ with common projection $g_t$ onto $G$, and such that $\mu_0$ and $\nu_0$ coincide with the identity element $T_m\iota_M$ of $J^1 G$ $($see Fig.~{\rm \ref{fig2})}. Then there exists a (necessarily unique) $\psi \in T^*\!M \otimes {\mathfrak g} $ such that:
 \begin{enumerate}\itemsep=0pt
 \item[$(1)$]
 $\theta \left( \frac{d}{dt}\mu_t\big|_{t=0} - \frac{d}{dt}\nu_t\big|_{t=0} \right)=j(\psi)$, and
 \item[$(2)$]
 $\langle df, \psi v \rangle=-\partial_t \langle df, \mu_t(v)\rangle|_{t=0} + \partial_t \langle df, \nu_t(v)\rangle|_{t=0}$,
 \end{enumerate}
 for all functions $f$ on $G$ defined in some neighborhood of $m$, and all $v \in T_mM$.
\end{Lemma}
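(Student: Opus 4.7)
The strategy is to extract $\psi$ from the exact sequence of Lie algebroids in Proposition~\ref{proposition-onthe}, and then verify formula~(2) by evaluating an appropriate test function on the source-fibre of $J^1G$ at $m$.

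Since $\alpha(g_t)=m$ for all $t$, both $\mu_t$ and $\nu_t$ lie in the source-fibre of $J^1 G$ at $m$, and both pass through the identity $T_m\iota_M$ at $t=0$. Hence $\frac{d}{dt}\mu_t|_{t=0}$ and $\frac{d}{dt}\nu_t|_{t=0}$ belong to ${\mathcal L}(J^1G)|_m$, and their difference projects to zero in ${\mathcal L}(G)|_m$ because $\mu_t$ and $\nu_t$ share the common projection $g_t$ to $G$. By exactness of the top row of the diagram in Proposition~\ref{proposition-onthe}, there is a unique $\zeta\in{\mathcal L}(\model)|_m$ with $j'(\zeta)=\frac{d}{dt}\mu_t|_{t=0}-\frac{d}{dt}\nu_t|_{t=0}$. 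Setting $\psi:=\theta'(\zeta)$ then yields property~(1) immediately from commutativity of the diagram.

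To establish~(2), represent $\zeta=\frac{d}{dt}\phi_t|_{t=0}$ for some path $\phi_t\in\model|_m$ with $\phi_0=0$, so that $\psi v=\partial_t\phi_t v|_{t=0}$ for every $v\in T_mM$. Because $j'$ is the derivative of the embedding $\phi\mapsto\phi^\vee$ from Theorem~\ref{theorem-mainman}(1), the identity determining $\zeta$ reads
\begin{gather*}
\frac{d}{dt}\phi_t^\vee\Big|_{t=0}=\frac{d}{dt}\mu_t\Big|_{t=0}-\frac{d}{dt}\nu_t\Big|_{t=0}
\end{gather*}
in ${\mathcal L}(J^1G)|_m$. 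Pair both sides with the smooth function $F_v(\mu):=\langle df,\mu(v)\rangle$, which is well-defined on a neighborhood of $T_m\iota_M$ in the source-fibre $\alpha^{-1}(m)\subset J^1G$ (where $v\in T_mM$ lies in the domain of $\mu$). The differentials $dF_v|_{T_m\iota_M}$ acting on each side produce
\begin{gather*}
\partial_t\langle df,\phi_t^\vee(v)\rangle\big|_{t=0}=\partial_t\langle df,\mu_t(v)\rangle\big|_{t=0}-\partial_t\langle df,\nu_t(v)\rangle\big|_{t=0}.
\end{gather*}
Since $\phi_t^\vee(v)=v-\phi_t v$ is based at the fixed point $m\in G$, the left-hand side collapses to $-df|_m(\partial_t\phi_tv|_{t=0})=-\langle df,\psi v\rangle$, and rearranging yields formula~(2).

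The main obstacle is keeping the derivative computations in the correct ambient space: one must confirm that $\mu_t$, $\nu_t$, and $\phi_t^\vee$ all genuinely lie in the source-fibre $\alpha^{-1}(m)\subset J^1G$, so that their derivatives at $t=0$ are tangent to this submanifold and belong to ${\mathcal L}(J^1G)|_m$, and that the test function $F_v$ is defined on precisely this source-fibre. Without these observations the passage from the Lie-algebroid identity to the scalar identity~(2) would not be justified; once they are in place the argument is a direct application of Proposition~\ref{proposition-onthe}.
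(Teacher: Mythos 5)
Your proof is correct, and for part (2) it takes a genuinely different route from the paper's. For part (1) the two arguments essentially coincide: the paper factors $\nu_t=\varcheck{\phi_t}\,\mu_t$ explicitly and differentiates, whereas you invoke exactness of the top row of Proposition~\ref{proposition-onthe} to produce $\zeta$ abstractly; either way the conclusion follows from commutativity of that diagram. The divergence is in part~(2). The paper deduces the scalar identity from the multiplication formula of Lemma~\ref{lemma-hotel}(4), $\mu_t(v)-\nu_t(v)=TR_{g_t}\cdot\Adjoint_{\mu_t}(\phi_t v)$, pairs with $df$, and then must argue via the chain rule that the term involving the $t$-dependence of $TR_{g_t}$ vanishes because it is evaluated on the zero section of $TG$ -- a computation forced by the fact that $\mu_t(v)$ and $\nu_t(v)$ are based at the moving point $g_t$. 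You sidestep this entirely by observing that all three velocity vectors lie in the tangent space at $T_m\iota_M$ to the source-fibre $\alpha^{-1}(m)\subset J^1G$, on which the function $F_v(\mu)=\langle df,\mu(v)\rangle$ is smooth, so that the scalar identity is just $dF_v$ applied to the vector identity of part~(1); the left side then collapses because $\phi_t^\vee(v)=v-\phi_t v$ sits in the fixed tangent space $T_mG$. Your route is shorter and avoids Lemma~\ref{lemma-hotel}(4) altogether (it uses only the formula $\varcheck\phi v=v-\phi v$ from Theorem~\ref{theorem-mainman}(1)); what the paper's route buys is an illustration of the product formulas developed in Section~\ref{sec3}, which are needed elsewhere anyway. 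The one point you rightly flag -- that $F_v$ is defined only on the source-fibre, so one must check all velocities are tangent to it -- is exactly the hypothesis that makes your shortcut legitimate.
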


For the def\/inition of $\theta$ and $j$ see Proposition~\ref{proposition-onthe}.
\begin{figure}[h] \centering
 \includegraphics[scale=0.35]{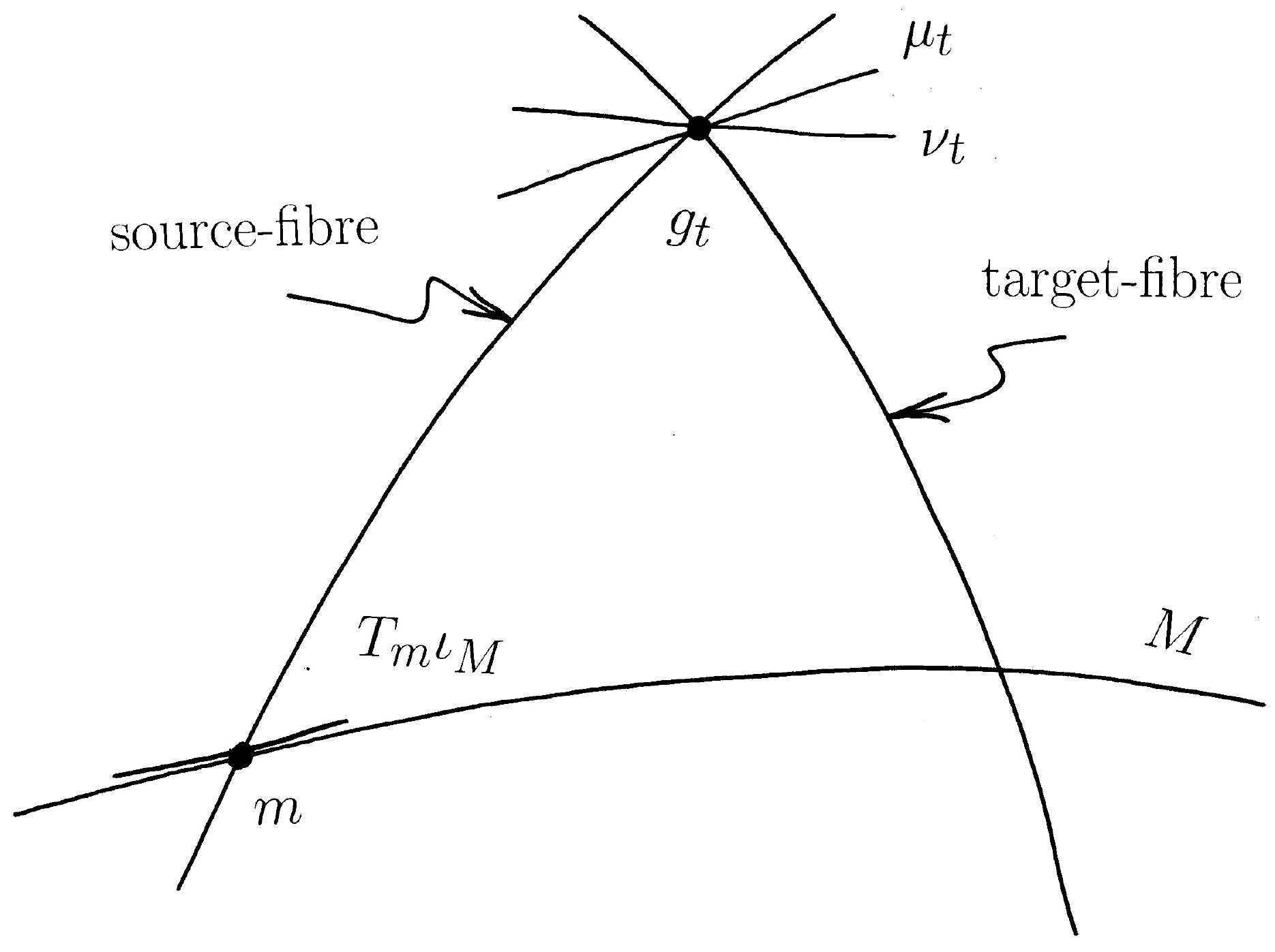}
 \caption{Schematic showing the paths in $J^1 G$ considered in Lemma~\ref{lemma-later}.}\label{fig2}
\end{figure}

\begin{proof}[Proof of Lemma~\ref{lemma-later}]
 Since $\mu_t$ and $\nu_t$ have common projection onto $G$, we have $\nu_t=\varcheck{\phi_t} \mu_t$ (multiplication in $J^1 G$) for some $\phi_t \in T^*\!M \otimes {\mathfrak g}$. We have $\phi_0^\vee=T_m \iota_M$, i.e., $\phi_0=0$. Dif\/ferentiating and applying the chain rule, we obtain
\begin{gather*}
 \frac{d}{dt}\mu_t\Big|_{t=0} - \frac{d}{dt}\nu_t\Big|_{t=0} = -\frac{d}{dt}\varcheck{\phi_t}\Big|_{t=0}= -j' \left( \frac{d}{dt}\phi_t\Big|_{t=0} \right),
\end{gather*}
where $j'$ is def\/ined in Proposition~\ref{proposition-onthe}. Applying $\theta $ to both sides, and using commutativity of the diagram appearing in that proposition, we obtain~(1) if we take $\psi=-\theta'(\frac{d}{dt}\phi_t)$. In that case, by the def\/inition of $\theta'$, we have
\begin{gather}
 \psi v = -\partial_t (\phi_t v)|_{t=0},\qquad v \in T_mM.\label{supermarket}
\end{gather}

On the other hand, applying Lemma~\ref{lemma-hotel}(4), we have
\begin{gather*}
 TR_{g_t }\cdot \Adjoint_{\mu_t} (\phi_t v)=\mu_t(v)-\nu_t(v).
\end{gather*}
Applying $df $ to both sides and dif\/ferentiating using the chain rule on the left, we obtain
\begin{gather*}
 \partial_t\langle df, TR_{g_t }\cdot \Adjoint_{\mu_t}\zeta_{TG}(m)\rangle|_{t=0} +~ \partial_t\langle df, \phi_t v\rangle|_{t=0}
 =\partial_t \langle df, \mu_t(v)\rangle|_{t=0} - \partial_t \langle df, \nu_t(v)\rangle|_{t=0}.
\end{gather*}
Here $\zeta_{TG}$ denotes the zero-section of $TG$. We have $TR_{g_t}\cdot \Adjoint_{\mu_t}\zeta_{TG}(m)=\zeta_{TG}(g_t)$, which means the f\/irst term on the left vanishes, giving us
\begin{gather*}
 \langle df, \partial_t(\phi_t v)|_{t=0}\rangle =\partial_t \langle df, \mu_t(v)\rangle|_{t=0} - \partial_t \langle df, \nu_t(v)\rangle|_{t=0}.
\end{gather*}
This, together with \eqref{supermarket}, establishes (2).
\end{proof}

Now suppose $D \subset TG$ is a Cartan connection on $G$, let $S\colon G \rightarrow J^1 G$ denote the corresponding morphism of Lie groupoids, and $\nabla$ the corresponding inf\/initesimal Cartan connection on~\mbox{${\mathfrak g} ={\mathcal L}(G)$}. We are now in a~position to prove $\nabla^D=\nabla$.

First, let us make the `algebraic' def\/inition of $\nabla$ in Section~\ref{hsd} more explicit. If we denote the (abstract) derivative of $S \colon G \rightarrow J^1 G$ by $dS \colon {\mathcal L}(G) \rightarrow {\mathcal L}(J^1 G)$ then we obtain a splitting $s \colon {\mathfrak g} \rightarrow J^1 {\mathfrak g}$ of the exact sequence appearing in the bottom row of the commutative diagram in Proposition~\ref{proposition-onthe}, by putting $s=\theta \circ dS$. Then, $\nabla $ is def\/ined implicitly by
\begin{gather*}
 j(\nabla X(m))=-J^1_mX +s(X(m)).
\end{gather*}
Recalling the def\/inition of $\theta $ given in~\eqref{sop}, we may rewrite this as
\begin{gather*}
 j(\nabla X(m))=\theta \left( -\frac{d}{dt}T_m\big( \Phi^t_{X^\mathrm{R}}\circ \iota_M \big)\Big|_{t=0} + \frac{d}{dt}S\big( \Phi^t_{X^\mathrm{R}}(m) \big)\Big|_{t=0} \right).
\end{gather*}
Applying the preceding lemma, we obtain
\begin{gather*}
 \langle df, \nabla_vX \rangle=\partial_t \big\langle df, T_m\big( \Phi^t_{X^\mathrm{R}}\circ \iota_M \big) \cdot v \big\rangle|_{t=0} -
 \partial_t \big\langle df, S\big( \Phi_{X^\mathrm{R}}^t(m) \big)(v) \big\rangle\big|_{t=0}.
\end{gather*}
Here $v \in T_mM$ is arbitrary, as is the locally def\/ined function~$f$ on~$G$. Our claim $\nabla=\nabla^D$ now follows from Lemma~\ref{lemma-onthe}.

\section{Proof of Theorem \ref{theorem1.2}}\label{srt}
In this section $D$ is a Cartan connection on $G$, $S \colon G \rightarrow J^1 G$ the corresponding right-inverse for the natural projection $J^1 G \rightarrow G$, and $\nabla $ the corresponding inf\/initesimal Cartan connection.

\subsection{Proof of Theorem \ref{theorem1.2}(1)}
Let ${\mathcal F} $ be the foliation on $V \subset G$ integrating $D$. For each $m \in M$ let $P_m \subset V$ be the intersection of $V$ with the source f\/ibre of $G$ over $m$. Then $P_m$ is transverse to ${\mathcal F} $. In some open neighborhood $U \subset G$ of $m_0$, the leaves of ${\mathcal F}$ intersect $U$ in level sets of a~submersion $\Omega \colon U \rightarrow P_{m_0}$. Now let $v=\dot \gamma (0) \in T_mM$ be the derivative of some path~$\gamma $ on~$M$, $m \in M\cap U$. Then, denoting by $A^\gamma $ the parallel action along $\gamma $ determined by $D$, we have
\begin{gather}
 \Omega \big( A^\gamma_{t,0}(g)\big) = \Omega(g), \label{bits}
\end{gather}
for all $t$ suf\/f\/iciently close to zero, and all $g \in P_{\gamma(t)}$ suf\/f\/iciently close to $\gamma(t)\in M$.

If ${\mathfrak g}|_{M\cap U}$ denotes the restriction of ${\mathfrak g}$, and ${\mathfrak g}_0:=T_{m_0}P_{m_0}$, then we obtain a~map $\omega \colon {\mathfrak g}_{M\cap U} \rightarrow {\mathfrak g}_0 $ by dif\/ferentiating $\Omega \colon U \rightarrow P_{m_0}$:
 \begin{gather}
 \omega \left( \frac{d}{ds} g(s)\Big|_{s=0}\right) :=\frac{d}{ds}\Omega \big( g(s) \big) \Big|_{s=0}.\label{tcal}
 \end{gather}
 As usual, we are viewing an element of ${\mathfrak g}$ as an equivalence class of paths in a source f\/ibre. Applying Theorem~\ref{theorem-aswedid} we calculate
\begin{align*}
 \omega(\nabla_vX)&=\omega\left( \partial_t \frac{\partial }{\partial s} A^\gamma_{t,0}\big( \Phi_{X^\mathrm{R}}^s(\gamma(t)) \big)\big|_{s=0,t=0} \right)\\
 & \overset{\text{by \eqref{tcal}}}{=}\partial_t \frac{\partial }{\partial s}\Omega\big(
 A^\gamma_{t,0}\big( \Phi_{X^\mathrm{R}}^s(\gamma(t)) \big) \big) \Big|_{s=0,t=0}\enspace \\
 &\overset{\text{by \eqref{bits}}}{=}\partial_t \frac{\partial }{\partial s}\Omega\big(
 \Phi_{X^\mathrm{R}}^s(\gamma(t)) \big) \Big|_{s=0,t=0}\enspace \\
 &= \partial_t \omega \big( X(\gamma(t)) \big)|_{t=0} = \langle d(\omega(X)),v \rangle.
\end{align*}
So $\omega(\nabla_V X)={\mathcal L}_V (\omega(X))$, for any vector f\/ield $V$ on $M \cap U$, where ${\mathcal L} $ denotes Lie derivative. For any two vector f\/ields $V_1$ and $V_2$ on $M \cap U$ we deduce
\begin{gather*}
 \omega\big( \curvature \nabla (V_1,V_2)X \big)={\mathcal L}_{V_1} {\mathcal L}_{V_2} (\omega(X)) - {\mathcal L}_{V_2} {\mathcal
 L}_{V_1} (\omega(X)) - {\mathcal L}_{[V_1,V_2]}(\omega(X)) = 0.
\end{gather*}
This shows that $\nabla $ is f\/lat in a neighborhood of the arbitrarily f\/ixed point $m_0 \in M$.

The remaining sections are devoted to the proof of part (2) of Theorem \ref{theorem1.2}.

\subsection[Proof that $G^D \subset G$ is a subgroupoid]{Proof that $\boldsymbol{G^D \subset G}$ is a subgroupoid}\label{jgd}
This follows immediately from the following lemma (used again in Section~\ref{jgda} below):
\begin{Lemma}\label{lemma-jgd}
 Let $b_1 \colon U_1 \rightarrow G$ and $b_2 \colon U_2 \rightarrow G$ be two multipliable local bisections integra\-ting~$D$. Then their product $b_1 b_2$ integrates $D$.
\end{Lemma}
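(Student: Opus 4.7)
The plan is to translate the hypothesis and conclusion into statements about the Lie groupoid morphism $S \colon G \rightarrow J^1 G$ and then exploit its multiplicativity. Specifically, a local bisection $b \colon U \rightarrow G$ integrates $D$ precisely when, at every $m \in U$, the subspace $D(b(m)) \subset T_{b(m)}G$ coincides with the image of $T_m b$; equivalently, $S(b(m)) = T_m b$ as elements of $J^1_{b(m)}G$. In short, $b$ integrates $D$ if and only if $S \circ b = J^1 b$, where $J^1 b$ denotes the first-jet prolongation of $b$.

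First, I would record the functoriality property: for any two multipliable local bisections $b_1, b_2$, the one-jet of the product bisection satisfies
\begin{gather*}
J^1_m(b_1 b_2) = J^1_{m'} b_1 \cdot J^1_m b_2, \qquad m' = \beta(b_2(m)),
\end{gather*}
where the product on the right is computed in $J^1 G$. This is a standard property of jet prolongation of bisections in a Lie groupoid, and could be verified directly from the definitions of multiplication in $\widehat G$ and in $J^1 G$ (both are just composition/multiplication of representative local bisections).

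Next, since $S$ is a morphism of Lie groupoids, for each $m$ in the domain of $b_1 b_2$ we have
\begin{gather*}
S\bigl((b_1 b_2)(m)\bigr) = S\bigl(b_1(m')\bigr) \cdot S\bigl(b_2(m)\bigr).
\end{gather*}
By hypothesis, $S(b_1(m')) = J^1_{m'} b_1$ and $S(b_2(m)) = J^1_m b_2$. Combining with the prolongation identity above yields
\begin{gather*}
S\bigl((b_1 b_2)(m)\bigr) = J^1_{m'} b_1 \cdot J^1_m b_2 = J^1_m (b_1 b_2) = T_m (b_1 b_2),
\end{gather*}
which says precisely that $D$ is tangent to $b_1 b_2$ at $(b_1 b_2)(m)$. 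As $m$ was arbitrary, $b_1 b_2$ integrates $D$.

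The only genuinely verifiable step is the jet-prolongation identity $J^1(b_1 b_2) = J^1 b_1 \cdot J^1 b_2$; I expect this to be the minor obstacle, but it is essentially a tautology once one unwinds the definition that multiplication in $J^1 G$ is induced from multiplication of local bisections. Everything else reduces to rewriting the statement ``$b$ integrates $D$'' as ``$S \circ b = J^1 b$'' and invoking that $S$ is multiplicative.
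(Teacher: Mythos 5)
Your proposal is correct and follows essentially the same route as the paper's proof: both reduce ``$b$ integrates $D$'' to ``$S(b(m))=J^1_m b$'', use that multiplication in $J^1G$ is by definition induced from multiplication of representative local bisections (so $J^1_m(b_1b_2)=J^1_{m'}b_1\cdot J^1_m b_2$), and then invoke multiplicativity of $S$. No gaps.
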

\begin{proof}
Let $m$ be an arbitrary point in $U_2$ (the domain of the product $b_1 b_2$) and let $g_1=b_1(m)$ and $g_2=b_2(m)$. The arrow $g_2$ terminates at some point $m'$ in the domain of $b_1$. Since $b_1$ and $b_2$ integrate $D$, the one-jet $S(g_2)$ of a local bisection at $m$ has representative $b_2$, while the one-jet~$S(g_1)$ of a~local bisection at $m'$ has representative $b_1$. By the def\/inition of the product in~$J^1 G$, $S(g_1)S(g_2)$ is the one-jet at~$m$ of the product $b_1 b_2$, a one-jet that must coincide with~$S(g_1g_2)$, because $D$ is a Cartan connection. So the image of the tangent map of~$b_1 b_2$ at~$m$ must be~$D(g_1g_2)$. Since $m$ was arbitrary, this shows $b_1b_2$ integrates $D$.
\end{proof}

Since the identity bisection lies in $G^D$, $G^D$ is wide in $G$.

From now on, we suppose that $\nabla $ is f\/lat.

\subsection[Proof that $G^D$ contains an open neighborhood of $M$]{Proof that $\boldsymbol{G^D}$ contains an open neighborhood of $\boldsymbol{M}$}\label{dpo}
Our proof that $G^D \subset G$ is open will rest on the fact that each point $m \in M$ has an open neighborhood lying in~$G^D$. In fact, we now prove the following stronger result:
\begin{Proposition}\label{proposition-dpo}
Assume $M$ is simply-connected and that $G$ has simply-connected source-fibres. Then, assuming~$\nabla$ is flat, $G^D=G$.
\end{Proposition}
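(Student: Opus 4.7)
The strategy is to construct, for every $g_0 \in G$, a global bisection $b \colon M \to G$ with $b(m_0) = g_0$ (where $m_0 = \alpha(g_0)$) whose image is an $n$-dimensional integral manifold of $D$ through $g_0$. The bisection is defined by parallel transport: for any smooth path $\gamma \colon [0,1] \to M$ from $m_0$ to $m$, and the associated parallel action $A^\gamma$ of Section~\ref{source}, set $b^\gamma(m) := A^\gamma_{0,1}(g_0)$. The whole proof reduces to showing that $b^\gamma(m)$ depends only on $m$ and not on the chosen path.

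Since $M$ is simply connected, path-independence reduces to showing that $A^\gamma_{0,1}(g_0) = g_0$ for every piecewise smooth loop $\gamma$ at $m_0$. Small loops are handled first. The key local ingredient is that the flatness of $\nabla$ yields involutivity of $D$ in a neighbourhood of $M$ in $G$: one runs the computation of Section~\ref{srt} (proof of Theorem~\ref{theorem1.2}(1)) in reverse, using Theorem~\ref{theorem-aswedid} to identify $\nabla$ with $\nabla^D$ and deducing that the bracket of any two local $D$-sections lies in $D$ pointwise near $M$. Frobenius then yields local bisections integrating $D$ through every point of this neighbourhood. To move this local integrability into a neighbourhood of an arbitrary $g_0$, one uses the multiplicative closure of $D \subset TG$ from Section~\ref{sec3} together with Lemma~\ref{lemma-jgd}: a local bisection through $m_0$ tangent to $D$, multiplied by an appropriate local bisection carrying $m_0$ to $g_0$ produced by the same local analysis, yields local bisections tangent to $D$ through $g_0$. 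The horizontal lift of a sufficiently small loop starting at $g_0$ then stays inside such a local bisection, which projects diffeomorphically to $M$ via $\alpha$; hence the lift closes up and $A^\gamma_{0,1}(g_0) = g_0$.

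For a general loop $\gamma$, the smooth null-homotopy provided by simple-connectedness of $M$ is subdivided finely enough that each cell bounds a loop in the small regime above. The parallel action along $\gamma$ then factors as a composition of parallel actions around the small cells, each trivial, so $A^\gamma_{0,1}(g_0) = g_0$. Simple-connectedness of the source fibres of $G$ is needed here to guarantee that this composition is unambiguously the identity on $\alpha^{-1}(m_0)$, eliminating any residual monodromy internal to the source fibre. Once path-independence is established, smoothness of $b$ follows from smooth dependence of flows on initial data; $\alpha \circ b = \mathrm{id}_M$ and $T_m b \subset D$ hold by construction; and transversality of $D$ to target fibres (built into $D$ being a Cartan connection) makes $\beta \circ b$ a local diffeomorphism, so $b$ is a bisection.

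The main obstacle is the second paragraph: upgrading the infinitesimal triviality of parallel transport (flatness of $\nabla^D$) to genuine involutivity of $D$ on a neighbourhood of $M$, and then propagating it to a neighbourhood of arbitrary $g_0$ using the multiplicative structure of Section~\ref{sec3} without circularity. The monodromy argument in the third paragraph is then comparatively routine, but it does require both simple-connectedness hypotheses in their distinct roles—the first for the null-homotopy of loops in $M$, and the second to rule out fibrewise monodromy in the composed parallel action on $\alpha^{-1}(m_0)$.
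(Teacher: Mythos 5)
There is a genuine gap, and it sits exactly where you locate ``the main obstacle'': the claim that flatness of $\nabla$ yields involutivity of $D$ in a neighbourhood of $M$ ``by running the computation of the proof of Theorem~\ref{theorem1.2}(1) in reverse.'' That computation cannot be reversed: it begins by choosing a submersion $\Omega\colon U\rightarrow P_{m_0}$ whose level sets are the leaves of the foliation integrating $D$, i.e., it \emph{presupposes} involutivity, and only then derives $\omega(\nabla_VX)={\mathcal L}_V(\omega(X))$ and flatness. Nor is there a cheap first-order substitute: for \emph{any} Cartan connection the torsion $\tau$ of $D$ vanishes identically along $M\subset G$, because the identity bisection is itself an $n$-dimensional integral manifold of $D$, so knowing that flatness of $\nabla$ controls the behaviour of $\tau$ to first order along $M$ gives no involutivity on a neighbourhood. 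The implication ``$\nabla$ flat $\Rightarrow$ $D$ involutive near $M$'' is precisely the nontrivial content of the proposition (in Salazar's treatment it requires the van Est isomorphism plus a further topological argument), so asserting it as a lemma inside your proof is circular with the statement you are trying to prove. Your subsequent propagation step inherits the same circularity: multiplying local bisections tangent to $D$ through $g_0$ requires already knowing that integral manifolds exist through points other than those of $M$. Two smaller problems: the parallel action $A^\gamma$ of Section~\ref{source} is only locally defined (horizontal lifts may not exist for all time along an arbitrary path), so $b^\gamma(m):=A^\gamma_{0,1}(g_0)$ need not be defined; and the role you assign to source-simple-connectedness (``eliminating residual monodromy internal to the source fibre'') does not correspond to any identifiable step in your argument.

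The paper avoids the local-involutivity question entirely. By Lemma~\ref{lemma-dpo}, flatness of $\nabla$ over simply-connected $M$ makes ${\mathfrak g}$ an action algebroid: the $\nabla$-parallel sections form a Lie algebra ${\mathfrak g}_0$ and evaluation gives a fibrewise isomorphism, whence a Lie algebroid morphism $\omega\colon{\mathfrak g}\rightarrow{\mathfrak g}_0$. Source-simple-connectedness enters through Lie~II, which integrates $\omega$ to a groupoid morphism $\Omega\colon G\rightarrow G_0$; the connected components of the level sets of $\Omega$ form a pseudoaction whose tangent plane field $D'$ is a manifestly integrable Cartan connection by Proposition~\ref{proposition-ergs}. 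One then shows $D=D'$ by comparing the infinitesimal connections: both are flat, so over simply-connected $M$ it suffices to check that $\nabla$-parallel sections are $\nabla'$-parallel, which is the short computation with Theorem~\ref{theorem-aswedid} and the invariance $\Omega(A^\gamma_{t,0}(g))=\Omega(g)$. If you want to salvage your parallel-transport picture, you would need to supply an independent proof of involutivity near $M$ (essentially Salazar's cocycle/van Est argument); as written, the proposal assumes the hard step rather than proving it.
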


We begin by recalling that every Lie algebroid equipped with a f\/lat Cartan connection $\nabla$ is an action algebroid, if $M$ is simply-connected \cite{Blaom_06} (for generalizations, see~\cite{Blaom_13}):
\begin{Lemma}\label{lemma-dpo}
Let ${\mathfrak g} $ be a Lie algebroid over a simply-connected manifold $M$ supporting a flat Cartan connection $\nabla$. Then the subspace ${\mathfrak g}_0 \subset \Gamma({\mathfrak g})$ of $\nabla $-parallel sections is a Lie subalgebra acting on $M$, with the corresponding Lie algebra homomorphism $\xi \mapsto \xi^\dagger \colon {\mathfrak g}_0 \rightarrow \Gamma({TM})$ being given by $\xi^\dagger(m):=\#\xi(m)$. Moreover, the vector bundle morphism $(\xi,m) \rightarrow \xi(m) \colon {\mathfrak g}_0 \times M \rightarrow {\mathfrak g}$ is a Lie algebroid isomorphism between the action algebroid ${\mathfrak g}_0 \times M$ and ${\mathfrak g} $.
\end{Lemma}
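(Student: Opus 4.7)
The plan is to split the claim into a vector bundle isomorphism, a bracket closure, and an action algebroid identification, and verify each in turn.

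First, I would use the flatness of $\nabla$ together with simple-connectedness of $M$ to show that evaluation at any fixed point $m_0 \in M$ gives a linear isomorphism $\mathfrak{g}_0 \to \mathfrak{g}|_{m_0}$, whence $(\xi, m) \mapsto \xi(m)$ is a vector bundle isomorphism $\mathfrak{g}_0 \times M \to \mathfrak{g}$. Injectivity is automatic, since a $\nabla$-parallel section on a connected manifold is determined by its value at any single point. For surjectivity, given $\xi_0 \in \mathfrak{g}|_{m_0}$, one parallel transports $\xi_0$ along paths to define a global section $\xi$, and path-independence (hence well-definedness) follows from simple-connectedness together with flatness. A subtle point is that $\curvature \nabla = 0$ in the sense defined in Section~\ref{hsd} directly controls the Koszul curvature of $\nabla$ only in directions lying in the image of the anchor $\#$, but the Cartan-morphism condition on $s \colon \mathfrak{g} \to J^1\mathfrak{g}$ supplies the additional structural identity needed to force vanishing of the full Koszul curvature; this is worked out in~\cite{Blaom_06}.

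Next, I would show $\mathfrak{g}_0$ is closed under the bracket on $\Gamma(\mathfrak{g})$. For $X, Y \in \mathfrak{g}_0$, the Cartan condition says the splitting $sX = J^1 X + \nabla X$ is a Lie algebroid morphism, so $s[X, Y] = [sX, sY]$ in $\Gamma(J^1\mathfrak{g})$. Projecting both sides onto the $T^*\!M \otimes \mathfrak{g}$ summand of $\Gamma(J^1\mathfrak{g})$ yields an explicit formula for $\nabla[X, Y]$ bilinear in $\nabla X$, $\nabla Y$, $X$, and $Y$; when $\nabla X = \nabla Y = 0$ every term on the right vanishes, giving $\nabla[X, Y] = 0$. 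Hence $\mathfrak{g}_0$ is a finite-dimensional Lie subalgebra of $\Gamma(\mathfrak{g})$, of dimension $\rank \mathfrak{g}$ by the first step.

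For the remaining claims, since the anchor $\# \colon \mathfrak{g} \to TM$ is a Lie algebroid morphism, the composition $\xi \mapsto \xi^\dagger := \# \xi$ is a Lie algebra homomorphism $\mathfrak{g}_0 \to \Gamma(TM)$, realizing $\mathfrak{g}_0$ as a Lie algebra acting on $M$. The associated action algebroid $\mathfrak{g}_0 \times M$ has anchor $(\xi, m) \mapsto \xi^\dagger(m) = \# \xi(m)$, and the evaluation map $(\xi, m) \mapsto \xi(m)$ intertwines these anchors by construction. Since by the first step every section of $\mathfrak{g}$ is a unique $C^\infty(M)$-linear combination of constant parallel sections from $\mathfrak{g}_0$, the Leibniz rule for Lie algebroid brackets, together with bracket preservation on constant sections, forces bracket preservation for arbitrary sections, making the evaluation map a Lie algebroid isomorphism.

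The main obstacle lies in the first step: showing that $\curvature \nabla = 0$ in the sense of Section~\ref{hsd} implies full Koszul flatness of the underlying vector bundle connection, needed for globally well-defined parallel transport along arbitrary paths. The Cartan-morphism condition encodes the required structural identity, but extracting it requires unwrapping the semidirect-product structure on $J^1\mathfrak{g}$ and tracing which Koszul curvature components are pinned down by the morphism property of $s$.
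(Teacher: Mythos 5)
Your argument is essentially correct, but be aware that the paper itself contains no proof of this lemma: it is recalled from \cite{Blaom_06} with a bare citation (``We begin by recalling that every Lie algebroid equipped with a flat Cartan connection is an action algebroid\dots''), so there is no internal argument to compare against. As a self-contained reconstruction, your outline is sound. Parallel transport of a flat connection over a simply connected base trivializes $\mathfrak{g}$ by a parallel frame, giving the linear isomorphism $\mathfrak{g}_0\cong\mathfrak{g}|_{m_0}$. For closure under the bracket, the cleanest phrasing of your second step is: the morphism property gives $s[X,Y]=[sX,sY]$, and for parallel $X$, $Y$ one has $sX=J^1X$, $sY=J^1Y$, while $[J^1X,J^1Y]=J^1[X,Y]$ in $\Gamma(J^1\mathfrak{g})$; comparing with $s[X,Y]=J^1[X,Y]+\nabla[X,Y]$ yields $\nabla[X,Y]=0$ --- your ``projection onto the $T^*\!M\otimes\mathfrak{g}$ summand'' is exactly this computation, and it needs only the semidirect-product fact that every term of the bracket in the kernel component involves $\nabla X$ or $\nabla Y$. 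The anchor being a Lie algebra morphism on sections, together with the Leibniz rule and the fact that parallel sections generate $\Gamma(\mathfrak{g})$ over $C^\infty(M)$, then finishes the action-algebroid identification. The one place you go astray is the ``main obstacle'' you flag at the end: the curvature in Section~\ref{hsd} is defined ``as usual,'' i.e., with $X$, $Y$ arbitrary vector fields on $M$ (otherwise $\nabla_X$ would not parse, $\nabla$ being a $TM$-connection), so $\curvature\nabla=0$ already means vanishing of the full Koszul curvature and no supplementary identity extracted from the Cartan-morphism condition is required. Indeed, if flatness constrained only anchor directions the lemma would be false --- for a bundle of Lie algebras the anchor vanishes identically and nothing could be concluded --- so that closing paragraph of your proposal should simply be deleted rather than deferred to \cite{Blaom_06}.
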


\begin{proof}[Proof of Proposition~\ref{proposition-dpo}]
Applying the lemma to the Lie algebra ${\mathfrak g} $ of $G$, we obtain a Lie algebra ${\mathfrak g}_0$ and a morphism of Lie algebroids $\omega \colon {\mathfrak g} \rightarrow {\mathfrak g}_0$, such that $\omega $ restricted to any f\/ibre~${\mathfrak g}|_m$ is an isomorphism onto ${\mathfrak g}_0$. Since $G$ is source-simply-connected, there exists a Lie groupoid morphism $\Omega \colon G \rightarrow G_0$ integrating $\omega \colon {\mathfrak g} \rightarrow {\mathfrak g}_0$, by Lie~II for Lie groupoids (see, e.g., \cite{Crainic_Fernandes_11}). Here~$G_0$ is the simply-connected Lie group integrating ${\mathfrak g}_0$. Because $\omega$ is a point-wise isomorphism, the restriction of $\Omega$ to any source-f\/ibre of $G$, or any target-f\/ibre of $G$, is a local dif\/feomorphism. In particular, each level set $\Omega^{-1}(g_0)$ of $\Omega$ is a pseudotransformation in the sense of Section~\ref{ergs}. Moreover, as $\Omega$ is a morphism of Lie groupoids, it is not hard to see that the foliation of~$G$ by connected components of these level sets is a pseudoaction of~$G$. The tangent $n$-plane f\/ield~$D'$ is accordingly a Cartan connection on~$G$, by Proposition~\ref{proposition-ergs}. Since~$D'$ is integrable by construction, to prove the proposition, it suf\/f\/ices to show $D=D'$.

Now $D=D'$ precisely when the corresponding Lie groupoid morphisms
 \begin{gather*}S,S' \colon \ G \rightarrow J^1 G\end{gather*}
are the same. Since $G$ has connected source-f\/ibres, it actually suf\/f\/ices to show that the derivatives $s, s' \colon {\mathfrak g} \rightarrow J^1 {\mathfrak g} $ coincide, or equivalently, that the corresponding inf\/initesimal Cartan connections~$\nabla$,~$\nabla'$ are the same.

Now $\nabla$ is f\/lat by hypothesis, and $\nabla'$ is f\/lat by Theorem~\ref{theorem1.2}(1), already proven above. Since~$M$ is simply-connected, to show $\nabla'=\nabla$ it is therefore suf\/f\/icient to show that $\nabla $-parallel sections of~${\mathfrak g} $ are also $\nabla' $-parallel. Now $X \in \Gamma({\mathfrak g})$ is $\nabla$-parallel precisely when~$\omega(X)$ is a constant~$\xi$. Let $v=\dot \gamma(0)\in T_mM$ be the derivative of some path~$\gamma$ on~$M$. Also, let $A^\gamma$ denote the parallel action along $\gamma $ def\/ined by the connection $D'$. Then, by the def\/inition of $D'$, we have
\begin{gather}
 \Omega \big(A^\gamma_{t,0}(g)\big) = \Omega(g), \label{bits2}
\end{gather}
for all $t$ suf\/f\/iciently close to $0$ and all $g \in \alpha^{-1}(\gamma(t))$ suf\/f\/iciently close to $\gamma(t) \in M$. On the other hand, applying Theorem~\ref{theorem-aswedid} to the connection $\nabla'$, we have
\begin{align*}
 \omega(\nabla'_vX)&=\omega\left( \partial_t \frac{\partial }{\partial s} A^\gamma_{t,0}\big( \Phi_{X^\mathrm{R}}^s(\gamma(t)) \big)\big|_{s=0,\,t=0} \right)
 =\partial_t \frac{\partial }{\partial s}\Omega\big( A^\gamma_{t,0}\big( \Phi_{X^\mathrm{R}}^s(\gamma(t)) \big) \big)
 \Big|_{s=0,t=0}\\
 &\overset{\text{by \eqref{bits2}}}{=}\partial_t \frac{\partial }{\partial s}\Omega\big( \Phi_{X^\mathrm{R}}^s(\gamma(t)) \big) \Big|_{s=0,\,t=0}\enspace
= \partial_t \omega \big( X(\gamma(t)) \big)|_{t=0} = \partial_t \xi |_{t=0} = 0.\tag*{\qed}
\end{align*}\renewcommand{\qed}{}
\end{proof}

\subsection[Proof that $G^D \subset G$ is open]{Proof that $\boldsymbol{G^D \subset G}$ is open}\label{jgda}
To show $G^D \subset G$ is open, let $g_0 \in G^D $ be any arrow, beginning at some $m_0 \in M$. By Section~\ref{dpo}, $m_0$ lies in some open neighborhood of $G$ contained entirely in $G^D$. In this neighborhood the connection $D$ is tangent to some foliation~${\mathcal F} $ of the neighborhood.

Using the fact that the foliation ${\mathcal F}$ is locally f\/ibrating, and using the fact that the restriction of the target projection $\beta \colon G \rightarrow M$ to each leaf of ${\mathcal F}$ is a~local dif\/feomorphism, it follows from the implicit function theorem that we can f\/ind a chart $\psi \colon U \times V \rightarrow G$, for a~neighborhood of~$m_0$ in~$G$, simultaneously adapted to the foliation and the target projection~$\beta $. Indeed, we may take~$U$ to be an open subset of $m_0$ in $M$, $V$ to be an open subset of the target-f\/ibre $\beta^{-1}(m_0)$, and may arrange that: (i)~$\beta(\psi(m,p))=m$; and (ii)~$\psi(U \times \{p\})$ is an integral manifold of $D$ through~$p$, and the image of a~local bisection of~$G$, for every $p \in V$. See Fig.~\ref{fig3}.
\begin{figure}[h]
\centering
\includegraphics[scale=0.18]{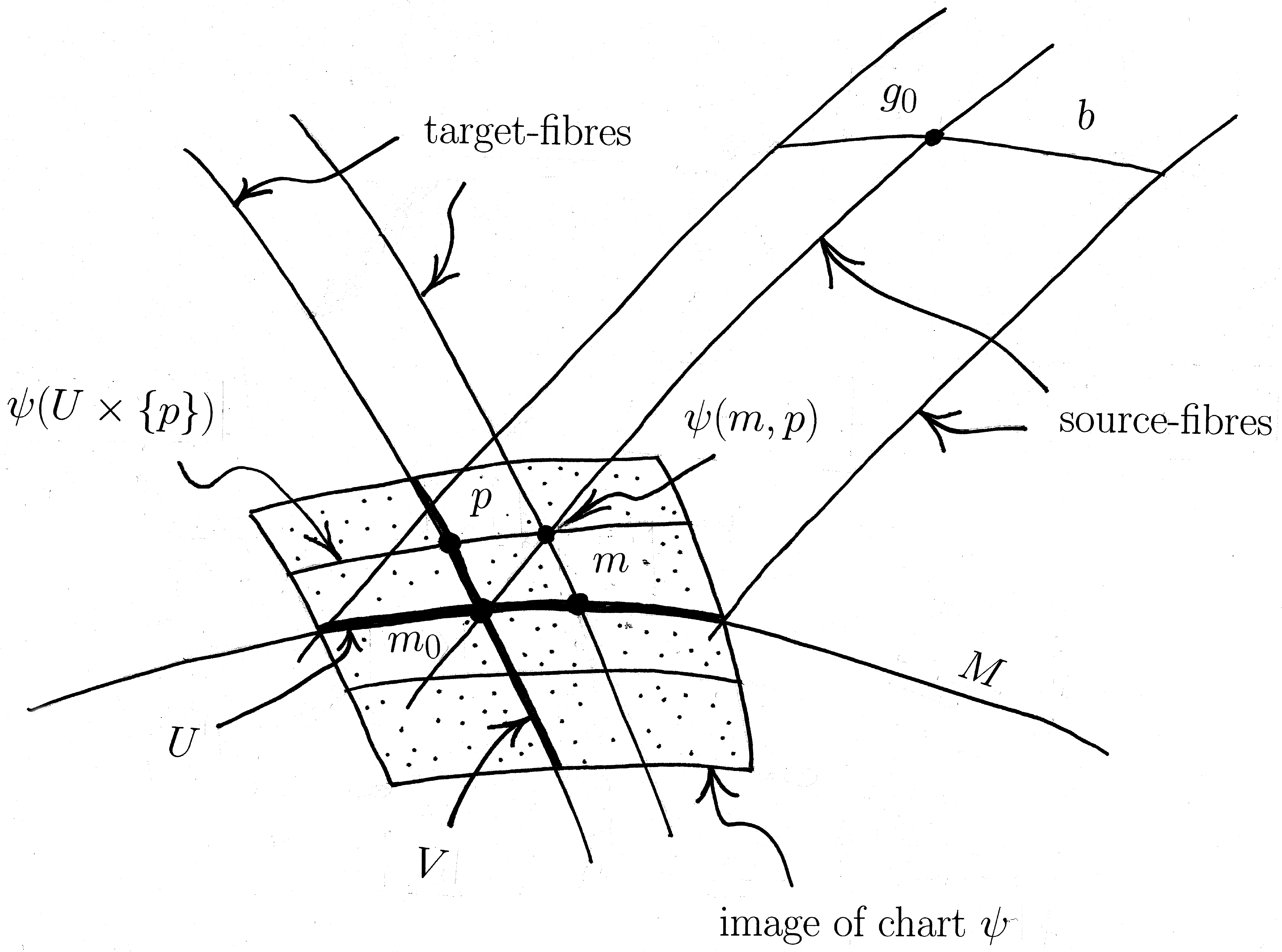}
 \caption{A chart $\psi \colon U \times V \rightarrow G$ for a~neighborhood of $m_0 \in M$.}\label{fig3}
\end{figure}

Since $g_0 \in G^D$, there is a local bisection $b$ of $G$ integrating $D$ with $b(m_0)=g_0$ (see Fig.~\ref{fig3}). Shrinking neighborhoods if necessary, we may suppose $b$ has domain $U$. We now use multiplication in the groupoid to translate the chart $\psi $ to a~chart $\bar\psi \colon U \times V\rightarrow G$ whose image is a~neighborhood of $g_0$: Put $\bar \psi(m,p)= b(m)\psi(m,p)$, which is well-def\/ined on account of (i) above. Now $\bar \psi $ maps $\{m\} \times V$ dif\/feomorphically into a f\/ibre of the target projection. On the other hand, for each f\/ixed $p \in V$, the map $m \mapsto \bar \psi(m,p) \colon U \rightarrow G$ is an immersion whose image $B_p$ is the product of the local bisection $b(U)$ and the local bisection $\psi (U \times \{p\})$ (here viewing local bisections as submanifolds of $G$). The two preceding remarks show that $\bar \psi $ has a tangent map of full rank, so that $\bar \psi(U \times V)$ is an open neighborhood of $g_0$. Furthermore, as the local bisection~$B_p$ is a product of local bisections integrating $D$, for each $p \in V$, {\em $B_p$ is itself an integral manifold of~$D$}, by Lemma~\ref{lemma-jgd}. Whence $\bar \psi (U \times V)$ is an open neighborhood of $g_0$ contained in~$G^D$.

\subsection[Proof that $G^D \subset G$ is closed]{Proof that $\boldsymbol{G^D \subset G}$ is closed}
That $G^D \subset G$ is closed follows from a general property of involutive $n$-plane f\/ields: The torsion tensor~$\tau $, a~section of $\wedge^2 D^*\otimes (TM/D)$ def\/ined by $\tau(V_1,V_2)=[V_1,V_2] \modulo D$, must vanish on~$G^D $. Since $\tau $ is continuous, the complement of $G^D $ in~$G$ is therefore open.

Since $G^D$ is simultaneously open and closed in $G$, it is a union of connected components. Since the identity bisection integrates any Cartan connection $D$ on~$G$, we have $G^{0} \subset G$. This completes the proof of part~(2) of Theorem~\ref{theorem1.2}.

\appendix
\section{The canonical Cartan connection for Riemannian geometry}
Here we sketch the construction of the canonical Cartan connection associated with every Riemannian manifold $M$. Details will be given elsewhere. The detailed construction of the corresponding inf\/initesimal connection appears in~\cite{Blaom_12}. For simplicity, we shall take `isometry' to mean `orientation preserving isometry'. For then the transitive Lie groupoid $G \subset J^1 (M \times M) $ def\/ined in Section~\ref{reg} will have connected isotropy groups and connected source f\/ibres ($M$ is assumed to be connected).

Def\/ine the {\it prolongation} $\mathrm p G \subset J^1 G$ of $G$ by ${\mathrm p}G=J^2(M \times M)\cap J^1 G$. Here $J^2(M \times M)$ denotes the Lie groupoid of two-jets of local bisections of $M \times M$, regarded as a subgroupoid of $J^1 (J^1 (M \times M))$.

\begin{Lemma}\label{lemmaA.1}
 The natural projection $\mathrm p G \rightarrow G$ is an isomorphism.
\end{Lemma}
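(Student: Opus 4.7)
The plan is to construct an explicit two-sided inverse $\sigma \colon G \to \mathrm p G$ of the projection, using the Levi-Civita connection on $M$. Since the projection is a Lie groupoid morphism between Lie groupoids of equal dimension, it suffices to produce a smooth section whose image exhausts $\mathrm p G$.

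Given $\mu \in G$, identified with a triple $(m, m', L)$ where $L \colon T_m M \to T_{m'} M$ is an orientation-preserving linear isometry, define a local smooth transformation of $M$ by
\begin{equation*}
\phi_\mu := \exp_{m'} \circ L \circ \exp_m^{-1},
\end{equation*}
both exponentials being those of the Levi-Civita connection, and set $\sigma(\mu) := J^2_m \phi_\mu$. Then $\sigma$ is smooth in $\mu$ and projects onto $\identity_G$. To see that $\sigma(\mu)$ lies in $\mathrm p G$ one must verify that the local section $b \colon m'' \mapsto J^1_{m''}\phi_\mu$ of $J^1(M \times M) \to M$ is tangent to the submanifold $G$ at $m$. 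This tangency amounts to the vanishing at $m$ of the first covariant derivative of the defect $\phi_\mu^* g - g$. Working in Riemannian normal coordinates based simultaneously at $m$ and $m'$, $\phi_\mu$ is represented by the linear map $L$, while the metric takes the form $g = \delta + O(|x|^2)$ in each chart, so $\phi_\mu^* g - g = O(|x|^2)$ and the required derivative vanishes at $m$.

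It remains to see that the projection is injective. Two $2$-jets in the fibre above $\mu$ differ by a pure second-derivative $S \in T_{m'}M \otimes S^2 T_m^* M$ with vanishing first-order part; the tangency condition applied to the difference forces the map
\begin{equation*}
v \mapsto L^{-1}\, S(v,\,\cdot\,) \colon T_m M \to \mathrm{End}(T_m M)
\end{equation*}
to take values in the skew-adjoint endomorphisms $\mathfrak{o}(T_m M)$. Combined with symmetry of $S$ in its two lower indices, this says $L^{-1}\,S$ represents an element of the first prolongation $\mathfrak{o}(n)^{(1)}$, which vanishes by the standard cyclic-permutation argument on three arguments. Hence $S = 0$ and $\sigma$ is a smooth two-sided inverse to the projection, as required.

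The main technical hurdle is the identification of the first-order tangency condition defining $J^1 G \subset J^1(J^1(M \times M))$ with the vanishing of the first covariant derivative of $\phi_\mu^* g - g$ at $m$, together with the normal-coordinate expansion that then yields $\phi_\mu^* g - g = O(|x|^2)$. The injectivity step, by contrast, is purely algebraic once that identification is in place, resting on the classical vanishing $\mathfrak{o}(n)^{(1)} = 0$.
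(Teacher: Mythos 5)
Your proof is correct, but it takes a genuinely different route from the paper's. The paper first establishes the infinitesimal statement --- that $\mathrm{p}\mathfrak{g}\to\mathfrak{g}$ is an isomorphism, via exactness of a Spencer sequence whose kernel is the first prolongation of $\mathfrak{o}(n)$, which vanishes --- and then integrates: injectivity of $\mathrm{p}G\to G$ follows because the kernel groupoid $\Sigma$ has trivial Lie algebroid and simply-connected Abelian fibres, and surjectivity follows because the map is a submersion whose image is a wide, open-and-closed subgroupoid of the connected groupoid $G$. You instead build an explicit smooth section $\mu\mapsto J^2_m(\exp_{m'}\circ L\circ\exp_m^{-1})$ out of the Levi-Civita exponential, verify membership in $\mathrm{p}G$ by the normal-coordinate expansion $g=\delta+O(|x|^2)$ (valid since the Christoffel symbols vanish at the origin of normal coordinates, so coordinate and covariant derivatives agree there), and prove injectivity fibrewise by the same linear algebra --- $\mathfrak{o}(n)^{(1)}=0$ --- that underlies the paper's Spencer computation. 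The two injectivity arguments are essentially the same fact in different clothing; the real divergence is in surjectivity, where your explicit section replaces the paper's soft connectedness argument. Your route buys an explicit geodesic-normal-coordinate formula for the canonical Cartan connection $S$ and incidentally exhibits $\mathrm{p}G$ as a submanifold of $J^1G$; the paper's route avoids any appeal to the exponential map and follows the pattern that generalizes to other finite-type structures where no such canonical section is available.
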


Supposing the lemma to be true, we let $S \colon G \rightarrow J^1 G$ be the unique right-inverse for the natural projection $J^1 G \rightarrow G$ that has ${\mathrm p} G$ as its image. Then $S$ is a~Cartan connection def\/ining an $n$-plane f\/ield~$D$ on~$G$.
\begin{Proposition}
 A local transformation $\phi $ of $M$ is an isometry if and only if
 \begin{gather}
 \phi (m)= \beta(b(m))\label{curio}
 \end{gather}
 for some local bisection $b$ of $G$ integrating $D$.
\end{Proposition}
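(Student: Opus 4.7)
The plan is to unpack the definition of $D$ via the prolongation $\mathrm{p}G$, so that integrating $D$ becomes equivalent to being locally a prolongation $J^1\phi$. The connection $S\colon G \rightarrow J^1G$ is the right-inverse of $J^1 G \rightarrow G$ whose image is $\mathrm{p}G = J^2(M\times M)\cap J^1G$; hence a local bisection $b$ of $G$ integrates $D$ precisely when, at every point $m$ of its domain, the one-jet $J^1_m b \in J^1G$ (computed by regarding $b$ as a local bisection of $J^1(M\times M)$) lies in $J^2(M\times M)$.

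For the forward direction, given a local isometry $\phi$ of $M$, I would consider its first prolongation $b:= J^1\phi$, i.e.\ $m \mapsto J^1_m\phi$. By the definition of $G$, each $T_m\phi$ is an isometry, so $b$ takes values in $G$ and is a local bisection of $G$. Its one-jet at any point $m$ is by definition $J^2_m\phi$, which lies in $J^2(M\times M)\cap J^1G = \mathrm{p}G$, so $b$ integrates $D$. Finally, the target projection sends $J^1_m\phi$ to $\phi(m)$, giving the identity \eqref{curio}.

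For the converse, let $b$ be a local bisection of $G$ integrating $D$, and define $\phi(m):=\beta(b(m))$. For each $m$ the one-jet $J^1_mb$ lies in $\mathrm{p}G \subset J^2(M\times M)$, which by definition of $J^2(M\times M)$ means that, to first order at $m$, $b$ coincides with the prolongation $J^1\phi_m$ of some local transformation $\phi_m$ of $M$ defined near $m$. Pointwise at $m$ this gives $b(m) = J^1_m\phi_m$, so in particular $T_m\phi_m$ is an isometry. A direct chain-rule computation identifies $T_m\phi$ with $T_{b(m)}\beta \circ T_m b = T_m(\beta \circ J^1\phi_m) = T_m\phi_m$, so $T_m\phi$ is itself an isometry; as $m$ was arbitrary, $\phi$ is a local isometry on the domain of $b$, and \eqref{curio} holds by construction.

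The main obstacle I expect is the careful justification of the intermediate step in the converse: unpacking the inclusion $J^2(M\times M) \subset J^1(J^1(M\times M))$ and verifying that membership of $J^1_m b$ in $J^2(M\times M)$ is exactly the statement that $b$ agrees to first order with the prolongation of some local transformation. This is really the content of Lemma~\ref{lemmaA.1} applied pointwise, but stating it cleanly requires some care with the two layers of jet bundles. Once that identification is in hand, the rest is essentially bookkeeping with the target projection.
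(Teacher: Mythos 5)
Your proof is correct and follows the same basic strategy as the paper's: both unpack the definition of $D$ through $S$ and $\mathrm{p}G = J^2(M\times M)\cap J^1 G$, so that a bisection $b$ of $G$ integrates $D$ exactly when $J^1_m b\in J^2(M\times M)$ for every $m$ in its domain, and the forward direction is identical in substance. In the converse the two arguments diverge slightly: the paper invokes the classical fact that a bisection $b$ whose extension $J^1 b$ takes values in $J^2(M\times M)$ is \emph{holonomic}, i.e., equal to $J^1\phi$ on its whole domain~--- a genuine integrability statement about the jet bundle which the paper quotes without proof (``it is a fact that\dots''). You avoid this by using only the pointwise content of $J^1_m b\in J^2(M\times M)$, namely that $b$ agrees to first order at $m$ with some prolongation $J^1\phi_m$; the chain rule then identifies $T_m\phi$ with $T_m\phi_m$, which is an isometry because $b(m)=J^1_m\phi_m\in G$. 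Since a local diffeomorphism whose differential is everywhere a linear isometry is a local isometry, this suffices, so your converse is actually more self-contained than the paper's. One small correction to your closing remark: the identification of membership in $J^2(M\times M)\subset J^1\big(J^1(M\times M)\big)$ with first-order agreement with a prolongation is just the definition of that inclusion, not the content of Lemma~\ref{lemmaA.1}; the latter is needed only beforehand, to know that $\mathrm{p}G$ is the image of a well-defined section $S$ and hence that $D$ exists at all.
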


Here $\beta \colon G \rightarrow M$ denotes the target projection.

\begin{proof}
The local transformations $\phi $ of $M$ that are isometries are in one-to-one correspondence with the holonomic bisections of $G$ (those bisections $b$ that are of the form $b=J^1 \phi $, for some bisection~$\phi $ of $M \times M$). In one direction the correspondence is given by f\/irst-order extension, \mbox{$\phi \mapsto J^1 \phi $}. In the other direction we pass from a bisection $b$ to a~local transformation~$\phi $ \mbox{using~\eqref{curio}}. But it is a fact that a~bisection $b$ of $G$ is holonomic if and only if {\em its} extension $J^1 b$ is a bisection of~${\mathrm p} G$, i.e., if and only if $J^1_m b \in S(b(m))$ at each $m$ in the domain of~$b$. Or, in other words, viewing~$b$ as a~submanifold of~$G$, if and only if $b$ is an integral manifold of~$D$.
\end{proof}

\begin{proof}[Proof of Lemma~\ref{lemmaA.1}] The corresponding inf\/initesimal statement is true. That is, the natural projection ${\mathrm p}{\mathfrak g} \rightarrow {\mathfrak g} $, where $\mathrm p {\mathfrak g}\subset J^1 {\mathfrak g} $ is def\/ined by $\mathrm p {\mathfrak g} :=J^2(TM) \cap J^1 {\mathfrak g}$, is an isomorphism. This follows from the exactness of a natural sequence
 \begin{gather*}
 0 \rightarrow \kernel \delta \rightarrow \mathrm p {\mathfrak g} \rightarrow {\mathfrak g} \rightarrow \cokernel \delta,
 \end{gather*}
where $\delta \colon T^*\!M \otimes {\mathfrak h} \rightarrow \wedge^2 T^*\!M \otimes TM$ is Spencer's coboundary operator and ${\mathfrak h} \subset {\mathfrak g} $ is the kernel of the anchor of~${\mathfrak g} $ (the Lie algebra bundle of skew-symmetric endomorphisms of tangent spaces in the present case). For details see~\cite{Blaom_12}.

To show that ${\mathrm p} G \rightarrow G$ is injective it suf\/f\/ices to show that the pre-image $\Sigma \subset {\mathrm p} G$ of the subgroup of identity elements is trivial, i.e., consists only of identity elements. The Lie algebroid of $\Sigma $ is trivial, because it is the kernel of ${\mathrm p} {\mathfrak g} \rightarrow {\mathfrak g} $. That the totally intransitive Lie groupoid $\Sigma $ itself is trivial follows from the fact that its f\/ibres are simply-connected Abelian Lie groups, as is not too hard to show.

Since ${\mathrm p} {\mathfrak g} \rightarrow {\mathfrak g} $ is surjective, the morphism ${\mathrm p} G \rightarrow G$ is a~submersion. Since its image $I$ is a wide subgroupoid of $G$, this image must be open and closed. Since $G$ has source-connected f\/ibres, $G$ is connected, implying $I=G$.
\end{proof}

\pdfbookmark[1]{References}{ref}
\LastPageEnding

\end{document}